\documentclass{article}
\usepackage[utf8]{inputenc}
\usepackage{a4wide}
\usepackage{amsmath}
\usepackage{amsfonts}
\usepackage{appendix}
\usepackage{amsthm}
\usepackage{import}
\usepackage{libertine}
\usepackage[libertine]{newtxmath}
\usepackage{graphicx}
\usepackage[usenames]{color}
\usepackage{subfigure}
\usepackage{mathtools}
\usepackage{empheq}
 \usepackage{hyperref}
\usepackage{cases}
\usepackage{geometry}
\usepackage{dsfont}
\usepackage[inline]{enumitem}
\usepackage{cases}

\linespread{1.1}

\DeclareFontFamily{U}{mathx}{\hyphenchar\font45}
\DeclareFontShape{U}{mathx}{m}{n}{<->mathx10}{}
\DeclareFontSubstitution{U}{mathx}{m}{n}
\DeclareSymbolFont{mathx}{U}{mathx}{m}{n}

\DeclareMathSymbol{\intop}  {\mathop}{mathx}{"B3}
\DeclareMathSymbol{\iintop} {\mathop}{mathx}{"B4}
\DeclareMathSymbol{\iiintop}{\mathop}{mathx}{"B5}
\DeclareMathSymbol{\ointop} {\mathop}{mathx}{"B6}
\DeclareMathSymbol{\oiintop}{\mathop}{mathx}{"B7}

\makeatletter
\DeclareFontFamily{OMX}{MnSymbolE}{}
\DeclareSymbolFont{MnLargeSymbols}{OMX}{MnSymbolE}{m}{n}
\SetSymbolFont{MnLargeSymbols}{bold}{OMX}{MnSymbolE}{b}{n}
\DeclareFontShape{OMX}{MnSymbolE}{m}{n}{
	<-6>  MnSymbolE5
	<6-7>  MnSymbolE6
	<7-8>  MnSymbolE7
	<8-9>  MnSymbolE8
	<9-10> MnSymbolE9
	<10-12> MnSymbolE10
	<12->   MnSymbolE12
}{}
\DeclareFontShape{OMX}{MnSymbolE}{b}{n}{
	<-6>  MnSymbolE-Bold5
	<6-7>  MnSymbolE-Bold6
	<7-8>  MnSymbolE-Bold7
	<8-9>  MnSymbolE-Bold8
	<9-10> MnSymbolE-Bold9
	<10-12> MnSymbolE-Bold10
	<12->   MnSymbolE-Bold12
}{}
\let\llangle\@undefined
\let\rrangle\@undefined
\DeclareMathDelimiter{\llangle}{\mathopen}
{MnLargeSymbols}{'164}{MnLargeSymbols}{'164}
\DeclareMathDelimiter{\rrangle}{\mathclose}
{MnLargeSymbols}{'171}{MnLargeSymbols}{'171}
\makeatother

\setcounter{MaxMatrixCols}{10}

\sloppy

\definecolor{red}{rgb}{1.0,0.0,0.0}

\definecolor{blu}{rgb}{0.0,0.0,0.0}

\definecolor{gre}{rgb}{0.03,0.50,0.03}

\theoremstyle{plain}

\newtheorem{theorem}{Theorem}[section]
\newtheorem{lemma}[theorem]{Lemma}
\newtheorem{definition}[theorem]{Definition}

\newtheorem{proposition}[theorem]{Proposition}
\newtheorem{remark}[theorem]{Remark}
\newtheorem{assumption}[theorem]{Assumption}

\newcommand{\eps}{\varepsilon}

\geometry{
	a4paper,
	total={170mm,257mm},
	left=30mm,
	right=30mm,
	top=50mm,
	bottom=40mm,
}

\def\R{\mathbb R}

\def\P{\mathbb P}

\def\calk{{\cal K}}

\def\calk{{\mathcal K}}

\def\to{\rightarrow}

\newcommand{\bE}{\mathbb{E}}

\newcommand{\bN}{\mathbb{N}\,}
\newcommand{\bP}{\mathbb{P}}

\newcommand{\bR}{\mathbb{R}}
\newcommand{\bS}{\mathbb{S}\,}

\newcommand{\rL}{\mathscr{L}}

\newcommand{\sF}{\mathcal{F}}

\newcommand{\sK}{\mathcal{K}}

\newcommand{\sP}{\mathcal{P}}
\newcommand{\sQ}{\mathcal{Q}}

\newcommand{\sW}{\mathcal{W}}

\newcommand{\ud}{\mathrm{d}}

\newcommand{\ind}{\mathds{1}}

\title{A mean field game model with non-local spatial interactions and resources accumulation}

 \author{
Daria Ghilli\footnote{Dipartimento di Scienze Economiche e Aziendali, Università di Pavia, Italy; email: daria.ghilli@unipv.it},
\;
Fausto Gozzi\footnote{Dipartimento di AI, Data and Decision Sciences,
LUISS University, Roma, Italy;
e-mail: fgozzi@luiss.it},
\;
Giovanni Zanco\footnote{Dipartimento di Ingegneria dell'Informazione e Scienze Matematiche,
Universit\`a di Siena, Italy;
e-mail: giovanni.zanco@unisi.it}
}

\begin{document}

\maketitle

\vspace{-1truecm}

\begin{abstract}
We study a family of mean field games arising in modeling the behavior of strategic economic agents which move across space maximizing their utility from consumption and have the possibility to accumulate resources for production (such as human capital).
The resulting mean field game PDE system is not covered in the actual literature on the topic as it displays weaker assumptions on the regularity of the data (in particular global Lipschitz continuity and boundedness of the objective are lost), state constraints, and a non-standard interaction term.
We obtain a first result on the existence of solution of the mean field game PDE system.
\end{abstract}

\textbf{Key words}:
Mean Field Games;
Stochastic Optimal Control problems;
Second order Hamilton-Jacobi-Bellman equations  equation;
Fokker-Planck-Kolmogorov equations;
Nash Equilibrium.
\smallskip \noindent

\textbf{AMS classification}:
Primary: 35Q89, 91A16; secondary: 49L12, 35Q84, 35G50, 47D07.

\textbf{Acknowledgements}:
All the authors have been supported by the Italian
Ministry of University and Research (MIUR), in the framework of two PRIN
projects: 2017FKHBA8 001 (The Time-Space Evolution of Economic Activities: Mathematical Models and Empirical Applications) and
20223PNJ8K (Impact of the Human Activities on the Environment and
Economic Decision Making in a Heterogeneous Setting:
Mathematical Models and Policy Implications).\\
Daria Ghilli has been supported by the INdAM-GNAMPA project “Modelli Matematici per i Processi Decisionali riguardanti la Transizione Energetica” CUP $E53C22001930001$ and by the INdAM-GNAMPA project "Modelli MFGs in Economia per lo studio della dinamica del capitale umano con spillovers spaziali" CUP $E55F22000270001.$
Giovanni Zanco has been supported by the F-NF grant ``MFGPRE: Mean-field games: regularity, path-dependency, economics''.\\
All the authors are affiliated with the GNAMPA group of INdAM (Istituto Nazionale di Alta Matematica ``Francesco Severi''). Part of this work has been carried out first while the authors were visiting the Institute of Advanced Study at Durham University, and then while Giovanni Zanco was visiting the Department of Applied Physics at Waseda University, Tokyo.

\tableofcontents

\section{Introduction}
\label{sec:intro}

The departure point of the present paper is the problem of
understanding the time-space evolution of an economic system, which is a crucial topic for economists and policy makers. The inclusion of the space dimension in economic analysis has regained relevance in the recent years. The emergence of a new economic geography is indeed one of the major events in the economic literature of the last decade (see e.g. \cite{Kru91}, \cite{Kru93}, \cite{Fujita99}).
Many papers have studied this problem under the assumption that decision about the system are taken by a unique agent (the so-called "social planner", which can be seen as the government of a nation): in this case the resulting mathematical problem is the optimal control of a Partial Differential Equation (PDE from now on), possibly stochastic (see e.g. \cite{BCZMD,BCFJET,BFFGJOEG,GozziLeocata}).
\\
However, in this context, all the agents of the economy are typically forward looking and they act following their own objectives, which do not necessarily coincide with the ones of the other agents or of the government. It is then interesting to understand if (and how) the decisions of the single agents affect and shape the time-space evolution of the whole economic system.
\\
For a large population of agents and from the methodological viewpoint, the collective behavior of such a type of agents is effectively modeled using a Mean Field Game (MFG hereafter) framework. This approach heuristically emerges as the number of agents approaches infinity, condensing the complex multi-agent interactions into a tractable system of two coupled partial differential equations, a Hamilton-Jacobi-Bellman (HJB) equation, which captures the optimal control strategy of a representative agent and a Fokker-Planck (FP) equation, which describes the evolving spatial and human capital distribution of the entire population.
The theory of MFGs has been intensively used in the last 20 years, starting from the seminal papers of Lasry-Lions \cite{LLJJM} and Huang-Caines-Malhamé \cite{HCMIEEE}, and significantly allows to shed some light on the outcome of such complex dynamics. Moreover, the MFG methodology is motivated by two principal advantages. First, solutions to the MFG system are known to approximate the Nash equilibria of the discrete, finite-agent model with high accuracy for large N. Second, it offers a crucial computational benefit. Directly solving the N-coupled optimization problems of the discrete model quickly becomes intractable, even for a modest number of agents (e.g., $N=10$). In contrast, the MFG formulation reduces the problem to a system of just the HJB and FP equations, making numerical simulation and analysis far more feasible.
This theory has been successfully employed to study the behavior of some economic models where many small homogeneous agents interact, see e.g. \cite{AchdouEtAl}. However, no paper tried to consider the case of the time-space evolution of economic variables, maybe because it displays many technical difficulties to which we return below in this introduction. We mention here the paper \cite{FRJET} where a kind of mean field approach is used, but without assuming that the small agents are forward-looking.
\\
Hence, the purpose of this paper is to provide a first mathematical and theoretical analysis of a family of MFGs arising in modeling how the strategic interactions of small agents can shape the time-space evolution of economic variables. Further research will be subsequently devoted to the economic implications of such mathematical results, and numerical analysis of the MFG under consideration in the present paper will be carried out. We mention the paper \cite{GRZet} where such numerical analysis has already been used in order to study the evolution of a spatial epidemiological model where the rate of infection depends on the distribution of the agents.
\\
As  a starting point, we assume that the small agents are homogeneous, move across space maximizing their intertemporal utility from consumption minus a disutility from displacement, and have the possibility to accumulate resources for production, for example, human capital. Hence their state variables, at any time $t\ge 0$, are the position $x(t)$ and the human capital $h(t)$. The control variables are the velocity $v(t)$ with which they move in the space, and the allocation of human capital production $s(t)$. We observe that the spatial and temporal progression of human capital is a subject of growing significance in economics. We refer e.g. to  \cite{LU88},one of the most important references for modeling human capital as an engine of growth, to  \cite{Bou08} establishing an existing literature on human capital in an epidemic setting and to \cite{Blea10} for empirical evidence. A key feature of  the model we propose is the presence of a spatial interaction terms influencing both the dynamics of the human capital and the utility of the agents. In economics such terms are called \textit{spatial spillovers}, meaning that  an individual's human capital is dynamically influenced by that of their neighbors.
\\
The resulting MFG, even in the simplest case, turns out to be very difficult and quite far from what is covered in the actual literature on the topic
(see e.g. the books \cite{CDL,CDLL}). The main reasons are the presence of the above mentioned non standard interaction terms, the weaker regularity assumptions on the data-in particular Lipschitz continuity and boundedness of the objective are lost, and, finally, a state constraint structure since human capital needs to remain positive.
In particular, the interaction term-which we will denote by $F$- has a non trivial mathematical structure, that is, is not globally Lipschitz with respect to the distribution. This feature (see Lemma \ref{barh} 3.) is stricly connected to the fact that F models the \textit{spatial spillovers} effect,  and in particular that aggregation of agents increases the human capital $h$.
Moreover, the positivity state constraints on the human capital brings the need of treating carefully the HJB and the FP equation proving suitable estimates near the border where $h=0$.
\\
We study such MFG using the PDE approach, i.e. we consider the associated PDE system of Hamilton-Jacobi-Bellman (HJB) and Fokker-Planck (FP) equations. An intensive work has been carried out on such equations, here we mention only the ones which are more related to our setting. In particular some work on Fokker-Planck equations for uncontrolled interacting systems with spatial structure and non-standard interaction term has been carried out in \cite{FPZ} and \cite{Z}.
Our main result is Theorem~\ref{thm:existence} on the existence of solutions of such PDE system. This is a departure point to understand the equilibria of the economic system, which will be the object of future research. In order to achieve this result we first study the HJB equation (see Proposition \ref{prop:MFGregularity}).  Concerning the FP equation we first analyse the properties of the drift of such equation and in Lemma \ref{lem:DpH_2} we prove that such drift is locally Lispchitz continuous for a suitable set of distributions. These results are based on  careful estimates of the underlying stochastic process modelling human capital (see e.g. point 3. in Proposition \ref{prop:MFGregularity}). After we turn to the FP equation and in order to establish well-posedness of such equation we analyse first the associated McKean-Vlasov equation. Existence of a strong solution of such equation, together with her pathwise uniqueness, are proved  in Proposition \ref{prop:MKV}. The proof is based on the estimate of the drift of the FP equation mentioned before, on a careful study of the properties of the underlying stochastic processes and on a fixed point argument. Once this result is carried out, we are ready to study the MFG in \ref{thm:existence} through again a fixed point argument. The proof is based on the results on the HJB and FP already explained.  We underline that our main existence result is local in time: in Theorem \ref{thm:existence} we require the time horizon \(T\) to belong to the admissible interval (see \eqref{eq:interval}). This  restriction was already fundamental to ensure well-posedness of the FP equation via the associated McKean-Vlasov dynamics, see Proposition \ref{prop:MKV}. This small-time condition comes from the a priori estimates in the FP and McKean-Vlasov analysis, which rely on Gronwall-type arguments and involve constants that grow exponentially in \(T\).  Hence we need to choose \(T\) sufficiently small in order to close the fixed-point/invariance estimates.

The paper is organized as follows: Section~\ref{sec:model} contains a description of the economic agent-based optimization problem, together with the assumptions we make throughout the paper, the system of partial differential equation we intend to solve and the definition of solution we use. In Section~\ref{sec:preliminary} we study separately the Hamilton-Jacobi-Bellman equation and the Fokker-Planck equation, obtaining all the necessary estimates. Eventually in Section~\ref{sec:MFG} we prove our main existence result.

 \section{The model and the assumptions}
 \label{sec:model}

 We fix the following sets and notation throughout the paper.\\
 We fix a stochastic basis $(\Omega,\sF,\left\{\sF_t\right\},\bP)$ where $\left\{\sF_t\right\}$ is a filtration of sub-$\sigma$-algebras of $\sF$ satisfying the usual conditions; we denote by $\bE$ the expectation with respect to $\bP$.\\

 We set $\bR_+=[0,+\infty)$ and $\bR_{++}=(0,+\infty)$. Our ambient space will be $\bR\times\bR_+$; we denote by $\sP_j=\sP_j(\bR\times\bR_+)$ the set of probability measures on $\bR\times\bR_+$ with finite $j$-th moment, endowed with the Wasserstein metric
\begin{equation}
  \sW_j(\mu,\nu)=\inf_{\pi\in\Gamma(\mu,\nu)}\bE\left[\left\vert (x_\mu,h_\mu)-(x_\nu,h_\nu)\right\vert^j\right]^{\frac{1}{j}},
\end{equation}
where $(x_\mu,h_\mu), (x_\nu,h_\nu)$ are random variables with joint law $\pi$ and $\Gamma(\mu,\nu)$ is the set of all couplings of $\mu$ and $\nu$.\\
By $T>0$ we denote our time horizon, which will be finite.\\
  \subsection{The economic model}
We consider an economy where there is a continuum of small interacting agents. The state variables are the agents' position and human capital. At time $t\geq 0$ they are denoted  respectively by $x(t)\in \R$ and $h(t)\in\bR_+$. The position here is chosen in $\R$ for simplicity as this paper is a first step towards the study of this type of models; our results extend easily to $\bR^d$. More realistic state spaces for the position $x$ (like $\bS^2$) could be considered in the future.\\
Each agent has control over $x(t)$ choosing, at every $t\ge 0$, the control process $v(t)$ (the velocity), and has control over $h(t)$ choosing the fraction $s(t)$ of wealth to be invested in human capital.
Moreover the agents are homogeneous and they interact with each otehr only through the distribution of their state variables.\\
The state equations of each agent on time intervals $[t_0,T]$ are as follows. The dynamics for the position is given by
\begin{equation}
\label{eq:evolxnew}
\begin{cases}
  dx(t) = v(t) dt  +\epsilon dZ(t)\quad\text{ for }t\in[t_0,T],\\
  x(t_0)=x_{0},
\end{cases}
\end{equation}
where $Z$ is a standard Brownian motion with respect to $\left\{\sF_t\right\}$, $v(\cdot)$ is the control process and $\epsilon>0$.\\
The human capital evolves according to the equation
\begin{equation}
  \label{eq:evolhnew1}
  \begin{cases}
    dh(t) = s(t) f(h(t)) F(x(t), \mu(t)) -\zeta h(t)dt +\chi h (t) dW(t)\quad\text{ for }t\in[t_0,T],\\
    h(t_0)=h_{0},
  \end{cases}
\end{equation}
Here  $W$ is a standard Brownian motion with respect to $\left\{\sF_t\right\}$, independent of $Z$, $s(\cdot)$ is the control process and $\chi>0$. Furthermore $\zeta>0$ is a constant decay factor, and, on the production function $f$ we make the following assumption:
\begin{gather*}
  f\colon\R_{+}\to\R_{+} \quad
\hbox{ is Lipschitz and increasing and such that $f(0)=0$.}
\end{gather*}
The interaction term $F$ is built as follows.
We first consider two maps $\eta_1, \eta_2\in C^2_b(\R)$ such that
$\Theta\geq\eta_i(\cdot)\geq\theta>0$ ($i=1,2$) for two strictly positive constants $\theta,\Theta$.
Then, for $x,y\in\R$ and $k\in\bR_+$ we set
\begin{equation*}
  b_1(x,y;k)=\eta_1(\vert x-y\vert)k,
  \quad b_2(x,y)=\eta_2(\vert x-y\vert),
\end{equation*}
and, for every probability measure $\mu$ on $\R\times \R_+$ with finite second moment (recall that $\sP_2$ is the set of such measures), we also set
\begin{equation*}
  \llangle \mu, b_1\rrangle(x):=\int_{\mathbb{R\times \R_+} } b_1(x,y;k)\mu(\ud y,\ud k), \qquad
  \llangle \mu, b_2\rrangle(x):=\int_{\mathbb{R}\times \R_+} b_2(x,y)\mu(\ud y,\ud k)
\end{equation*}
Given the above, we eventually define the function $F\colon\R\times\sP_2\to\bR_+$ as
\begin{equation*}
  F(x, \mu)=\frac{\llangle \mu, b_1\rrangle(x)}{\llangle \mu, b_2\rrangle(x)}.
\end{equation*}
Notice that $F$ is always well defined since the denominator is always greater than $\theta$ while we have $F(x,\mu)=0$ if and only if $\mu(\R\times\{0\})=1$.\\
From the definition of F is straightworward to notice that F models the spatial interactions and  the aggregation effect which increases the human capital of one individual. In other word,s the level of human capital of an individual is influenced bt the level of human capital of neighbourhoods. The" level" of influence is described mathematically by the functions $\eta_i$. As an example, we could consider $\eta_i$ being suitable regularizations of the indicator function of two balls, whose radius model the maximal distances of the heighbourhoods who influence the human capital of the agent considered.
\begin{remark}
  The interaction term $F$ is actually well-defined on any set of probability measures whose marginal with respect to $h$ has finite expectation; in particular it is well-defined on $\sP_1$. However since most of our arguments (including some of the properties of $F$ we exploit, see Lemma~\ref{barh}) work only in $\sP_2$, we set our model in $\sP_2$ from the beginning.
\end{remark}

To describe the optimization problem we fix a connected compact set $K\subset\bR$ such that $0\in K$ and define the set of admissible controls as
\begin{equation*}
\calk:=\left\{(v(\cdot), s(\cdot)):\Omega\times \R_+ \to K\times[0,1], \; \hbox{predictable}.\right\}
\end{equation*}
Individuals control their position in space to maximise their gain from such spillover, considering at
the same time that moving towards areas with high human capital is a costly action.
Assuming that no individual has an overwhelming influence on the systemwith respect
to others, optimisation of each individual’s position gives rise to a symmetric game
among economic agents (the individuals).

The aim of the typical agent is to maximize
\begin{multline}
  \label{eqn:J}
  J(t_0, x_{0},h_{0};v(\cdot), s(\cdot))\\:=\mathbb{E} \left [ \int_{t_0}^{T} e^{-\rho t}(u_\sigma\left( [(1-s(t)) f(h(t))]^{1-\gamma} F(x(t), \mu(t))^\gamma A( x(t))\right)-a(v(t))  ) dt  \right ]
	\end{multline}
over all admissible control process $(v,s)\in \mathcal{K}$. Here $\rho>0$ is the intertemporal discount factor while $a\colon K\to \bR$ is a given strictly convex cost function satisfying $a(0)=0$. Moreover, A represents in economics the production function or amenity of local services.

We make the following further assumptions:
\begin{gather*}
 A\colon\R\to\R
\end{gather*}
is Lipschitz and such that there exist constants $\underline{A},\overline{A}$ satisfying
\begin{equation*}
  0<\underline{A}\leq A(x)\leq \overline{A};
\end{equation*}
$u_\sigma\colon\R_{+}\to\R_{+}$ is given by
\begin{equation*}
  u_\sigma(z)=\frac{z^{1-\sigma}}{1-\sigma}
\end{equation*}
with $\sigma \in (0,1)$.\\

\begin{remark}
Heuristically, the above model with a continuum of players can be seen as the limit, when the number of agents goes to infinity, of the following game with finitely many players.
\\
For fixed $N\in\bN$, consider independent Brownian motions $(Z^i,W^i)$, $i=1,\dots,N$, and $N$ agents with respective positions $x^i(\cdot)$ and human capitals $h^i(\cdot)$ following the equations \eqref{eq:evolxnew}-\eqref{eq:evolhnew1} with $(Z^i,W^i)$ in place of $(Z,W)$. Each agent chooses controls $v^i(\cdot)$ and $s^i(\cdot)$ in $\calk$ as to maximize \eqref{eqn:J} where as $\mu(t)$ we put
\begin{equation*}
  \mu(t)=\frac1N\sum_{i=1}^N\delta_{\left(x^i(t),h^i(t)\right)},
\end{equation*}
subject to her own dynamics given by (\ref{eq:evolxnew}) and (\ref{eq:evolhnew1}).

The final aim in the study of our mean field game would be to show that its solutions (which we define below in Subsection~\ref{subsec:def}) corresponds to an approximation of a Nash equilibrium for the $N$ players game. This will be part of subsequent research.
\end{remark}

Equations (\ref{eq:evolxnew}) and (\ref{eq:evolhnew1}) can be written in vector form as follows (this is a notation we will sometimes refer to in the paper). Set
\begin{equation*}
 \mathbf{x}=(x,h),\  \mathbf{W}=(Z,W)^\top,
\end{equation*}
\begin{equation*}
  \mathbf{B}(\mathbf{x},s,v,\mu)=\mathbf{B}(x,h,s,v)=
  \begin{pmatrix}
    v\\
    s f(h) F(x,\mu)-\zeta h
  \end{pmatrix}
\end{equation*}
and
\begin{equation*}
  \mathbf{G}\colon\R\times\bR_+\to\bR\times\bR_+,\quad  \mathbf{G}(\mathbf{x})=\mathbf{G}(x,h)=
  \begin{pmatrix}
    \epsilon & 0\\
    0 & \chi h
  \end{pmatrix};
\end{equation*}
then the dynamics of $\mathbf{x}$ is given by
\begin{equation}
  \label{sys:vec}
  \begin{cases}
    \ud\mathbf{x}(t)=\mathbf{B}(\mathbf{x}(t),s(t),v(t),\mu(t))\ud t+\mathbf{G}(\mathbf{x}(t))\ud\mathbf{W}(t)\quad \text{for }t\in[t_0,T],\\
    \mathbf{x}(t_0)=(x_0,h_0).
  \end{cases}
\end{equation}

\subsection{The Mean Field Game system}
\label{subsec:def}

On $[0,T]\times\R\times\R_{+}$ we consider the forward-backward system of PDEs
\begin{equation}\label{MFGb}
\begin{cases}
  -\partial_tV(t,x,h)+\rho V(t,x,h)=H_0(p)+H_1(x,h,\mu(t),DV(t,x,h))\\
  \phantom{aaaaaaaaaaaaaaaaaaaa}+ \frac12 \epsilon^2D^2_{xx}V(t,x,h)+\frac12 \chi^2 h^2 D^2_{hh}V(t,x,h),\\
  \partial_t\mu(t)=\frac12\epsilon^2 D^2_{xx}\mu(t) + \frac12 \chi^2 D^2_{hh}\left(h^2 \mu(t)\right)\\
  \phantom{aaaaaaaaaaaaa}- D_x\left(D_pH_0(D_xV(t,x,h))\mu(t)\right)-D_h\left(D_pH_1(x,h,\mu(t),D_hV(t,x,h))\mu(t)\right),\\
\mu(0)=\mu_0, \quad V(T,x,h)=0 \quad\mbox{ in } \R\times \R_{+}.
\end{cases}
\end{equation}
whose solution is a couple $(V,\mu)$, where $V$ is a real-valued function on $[0,T]\times\R\times\R_{+}$ and $\mu$ is a function on $[0,T]$ taking values in $\sP_2$ (a precise notion of solution will be given in Definition~\ref{def:sol} below).\\

The Hamiltonians $H_0\colon\bR\to\bR$ and $H_1\colon\R\times\R_+\times\sP_2\times\R$ are given by
\begin{equation*}
  H_0(p)=\sup_{v\in K}\left\{pv-a(v)\right\}
\end{equation*}
 and
\begin{equation*}
  H_1(x,h,\mu,q)=\sup_{s\in[0,1]}\left\{\left(s f(h) F(x,\mu)-\zeta h\right)q+u_\sigma\left(A(x)\left[(1-s)f(h)\right]^{1-\gamma}F(x,\mu)^\gamma\right)\right\}.
\end{equation*}
Setting
\begin{equation*}
  \mathbf{H}(x,h,\mu,p,q)=
  \begin{pmatrix}
    H_0(p)\\ H_1(x,h,\mu,q)
  \end{pmatrix},
\end{equation*}
\begin{equation*}
  H(x,h,\mu,p,q)=\mathbf{H}(x,h,\mu,p,q)\cdot(1,1)=H_0(p)+H_1(x,h,\mu,q)
\end{equation*}
and referring to the dynamics as written in (\ref{sys:vec}), we can formulate equivalently system (\ref{MFGb}) as
\begin{equation*}
  \begin{cases}
  -\partial_tV(t,x,h)+\rho V(t,x,h)=H(x,h,\mu(t),DV(t,x,h)) + \frac12 \mathrm{Tr}\left[\mathbf{G}(x,h)\mathbf{G}^\ast(x,h)D^2V(t,x,h)\right],\\
    \partial_t\mu(t)=\frac{1}{2}\sum_{i,j} D^2_{i,j}(\mathbf{G}(x,h)\mathbf{G}^\ast(x,h)\mu(t))-\mathrm{div}\left(D_p \mathbf{H}(x,h,\mu(t), DV(t,x,h))\mu(t)\right),\\
    \mu(0)=\mu_0, \quad V(T,x,h)=0 \quad\mbox{ in } \R\times \R_{+}, \quad V(t,x,0)=0 \quad \mbox{ in } (0,T) \times \R.
  \end{cases}
\end{equation*}

We look for solutions to (\ref{MFGb}) in the following sense.
  \begin{definition}
    \label{def:sol}
    A couple $(V,\mu)$ with $V\colon[0,T]\times\bR\times\bR_+\to\bR$ and $\mu\colon[0,T]\to\sP_2$ is a solution to (\ref{MFGb}) if
\begin{enumerate}
\item it satisfies all boundary conditions;
\item $V$ is a classical $C^{1,2}((0,T)\times\bR\times\bR_{++})$ solution of the first equation in (\ref{MFGb});
\item $V$ is continuous on $[0,T]\times\bR\times\bR_{+}$;
  \item $D_xV$ and $D_hV$ are defined on $[0,T]\times\bR\times\bR_+$;
\item $\mu$ satisfies the second equation in (\ref{MFGb}) in the sense of distributions, integrated in time, i.e., for every test function $\phi\in C^{\infty}_c([0,T]\times\bR\times\bR_+)$ and every $0\leq t\leq T$ it holds
  \begin{equation}
    \label{eq:sol_weak}
    \begin{aligned}
      \int_{\bR\times\bR_+}\phi(t,x,h)&\mu(t;\ud x,\ud h) - \int_{\bR\times\bR_+}\phi(0,x,h)\mu_0(\ud x,\ud h)\\
                                      &= \int_0^t\int_{\bR\times\bR_+} \left(D_pH_0(D_xV(r,x,h))D_x\phi(r,x,h)\right.\\
                                      &\phantom{\int_0^t\int_{\bR\times\bR_+}}+\left.D_qH_1\left(x,h,\mu(r),D_hV(r,x,h)\right)D_h\phi(r,x,h)\right)\mu(r;\ud x,\ud h)\ud r\\
                                      &\phantom{\int_0^t}+ \frac12\int_0^t\int_{\bR\times\bR_+}\left(\epsilon^2 D^2_{xx}\phi(r,x,h) + \chi^2 h^2 D^2_{hh}\phi(r,x,h)\right)\mu(r;\ud x,\ud h)\ud r\\
                                      &\phantom{\int_0^t}+ \int_0^t\int_{\bR\times\bR_+}\partial_t\phi(r,x,h)\mu(r;\ud x,\ud h)\ud r.
    \end{aligned}
  \end{equation}
\end{enumerate}
\end{definition}

In the rest of the paper we make the following assumptions:
\begin{assumption}\label{ass:id}
The initial datum $\mu_0$ is in $\sP_2$.
\end{assumption}

\begin{assumption}
  \label{ass:DH_0}
  The function $a$ is such that $p\mapsto D_pH_0(p)$ is locally Lipschitz.
\end{assumption}
Assumption \ref{ass:DH_0} is satisfied, for example, if $a$ is a convex polynomial with degree at least $2$.
\begin{remark}\label{rem:H0}
Note that by Assumption \ref{ass:DH_0} and since $K$ is compact, $H_0$ is Lipschits continuous, so that $D_pH_0$ is bounded by some constant $\overline{B}$.
\end{remark}

We also introduce some shorthands: for a measure $\mu\in\sP_j$ we write

\begin{equation*}
  P_j(\mu)=\int\left\vert(y,k)\right\vert^j\mu(\ud y,\ud k),\quad M(\mu)=\int k\mu(\ud y,\ud k) \text{and}\quad M_2(\mu)=\int k^2\mu(\ud y,\ud k);
\end{equation*}
clearly $M(\mu)\leq M_2(\mu)^{\frac12}$ and, as noted before, we have $M(\mu)>0$ if and only if $\mu(\R\times\{0\})<1$ and $M(\mu)=0$ if and only if $\mu(\R\times\{0\})=1$.\\
We use similar notations when considering more measures at the same time: for a family $K$ of measures we write
\begin{equation*}
  \overline{M}(K)=\sup_{\mu\in K}\int k \mu(\ud y,\ud k)
\end{equation*}
and
\begin{equation*}
  \underline{M}(K)=\inf_{\mu\in K}\int k \mu(\ud y,\ud k).
\end{equation*}
In particular we will often deal with maps $[0,T]\ni t\mapsto \mu(t)$; in this case we write
\begin{equation*}
  \overline{M}(\mu(\cdot))=\sup_{t\in[0,T]}\int k \mu(t;\ud y,\ud k),
\end{equation*}
with analogous definition for $\underline{M}(\mu(\cdot))$. $\overline{M}_2(K)$ and $\underline{M}_2(K)$ are defined similarly, integrating the function $k^2$ in place of the function $k$.

\section{Preliminary results}
\label{sec:preliminary}
\subsection{The Hamilton-Jacobi-Bellman equation}
To study the HJB equation we fix a family of measures $\mu(\cdot)\in C^\alpha([0,T];\sP_1)$, $\alpha\in(0,1)$, and we consider the problem of choosing $(s(\cdot),v(\cdot))\in\sK$ as to maximise $J(t_0,x_0,h_0;v(\cdot),s(\cdot),\mu(\cdot))$ subject to (\ref{eq:evolxnew}) and (\ref{eq:evolhnew1}), that is, to
\begin{equation}
  \label{eq:dyn_1}
  \begin{cases}
    \ud x(t)=v(t)\ud t+\epsilon\ud W^1(t),\\
    \ud h(t)=\left[s(t)f(h(t))F(x(t),\mu(t))-\zeta h(t)\right]\ud t+\chi h(t)\ud W^2(t),\\
    x(t_0)=x_0,\ h(t_0)=h_0,
  \end{cases}
 \end{equation}
where $(x_0,h_0)\in\bR\times\bR_+$ are fixed. By standard results on stochastic differential equations, we have existence and uniqueness of a strong solution to~(\ref{eq:dyn_1}) for every fixed $(s(\cdot),v(\cdot))\in\sK$. Moreover $h(t)>0$ at all times if $h_0>0$, and $h\equiv 0$ if $h_0=0$.\\

Denote by $V$ the value function of the corresponding optimization problem, i.e.
\begin{equation}\label{eqn:V}
  V(t_0,x_0,h_0)=\sup_{(v(\cdot),s(\cdot))\in\sK}J(t_0,x_0,h_0;v(\cdot),s(\cdot),\mu(\cdot));
\end{equation}
we stress that at the present stage $\mu(\cdot)$ is fixed and $V$ depends on $\mu(\cdot)$ (although the dependence is hidden in the notation), while $\mu(\cdot)$ does not depend on $V$.

\subsubsection{Preliminary estimates on the interaction coefficient.}

	\begin{lemma}\label{barh}
		\begin{enumerate}
                \item For every fixed $\mu\in\sP_2$, $x\mapsto F(x,\mu)$ is bounded; if $M(\mu)>0$ then $x\mapsto F(x,\mu)$ is also bounded away from zero. Moreover $F(x,\mu)$ is bounded uniformly for $\mu$ in any relatively compact set of $\sP_2$.
			\item for every fixed $\mu\in\sP_2$, the function $x \mapsto F(x, \mu)$ is ($C^2$ and) Lipschitz, with Lipschitz constant depending on $\mu$. Such constant is uniform on relatively compact subsets of $\sP_2$.
			\item $\mu \mapsto F(x, \mu)$ is locally Lipschitz continuous (i.e. in any relatively compact subset of $\sP_2$), uniformly in $x$.
                        \end{enumerate}

                      \end{lemma}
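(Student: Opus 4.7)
The plan is to exploit systematically the decomposition $F(x,\mu)=N(x,\mu)/D(x,\mu)$ with $N(x,\mu):=\llangle \mu,b_1\rrangle(x)$ and $D(x,\mu):=\llangle \mu,b_2\rrangle(x)$, together with the uniform lower bound $D(x,\mu)\ge\theta$ that comes from $\eta_2\ge\theta$ and from $\mu$ being a probability measure. Since $\sP_2$-relative compactness implies finiteness of $\overline{M}_2(K)$, hence also of $\overline{M}(K)\le\overline{M}_2(K)^{1/2}$, uniformity on relatively compact subsets will always follow once the $\mu$-dependence of the various constants has been isolated.

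For (1), the two-sided bound $\theta\le\eta_i\le\Theta$ yields at once $\theta\le D(x,\mu)\le \Theta$ and $\theta M(\mu)\le N(x,\mu)\le \Theta M(\mu)$, so
\[
\frac{\theta}{\Theta}\,M(\mu)\;\le\;F(x,\mu)\;\le\;\frac{\Theta}{\theta}\,M(\mu),
\]
which gives boundedness, positivity of the lower bound whenever $M(\mu)>0$, and the desired uniformity. For (2), since $\eta_i\in C^2_b(\R)$, I would differentiate under the integral sign to obtain $N(\cdot,\mu),D(\cdot,\mu)\in C^2(\R)$ with
\[
\|\partial_x^k N(\cdot,\mu)\|_\infty\le \|\eta_1^{(k)}\|_\infty M(\mu),\qquad \|\partial_x^k D(\cdot,\mu)\|_\infty\le \|\eta_2^{(k)}\|_\infty,\qquad k=1,2,
\]
and then the quotient rule together with $D\ge\theta$ gives $F(\cdot,\mu)\in C^2(\R)$ and in particular a Lipschitz bound whose constant depends only on $\theta,\Theta,\|\eta_i^{(k)}\|_\infty$ and on $M(\mu)$.

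The only nontrivial step is (3). I would write
\[
F(x,\mu_1)-F(x,\mu_2)=\frac{(N_1-N_2)\,D_2-N_2\,(D_1-D_2)}{D_1 D_2}
\]
with $N_i:=N(\cdot,\mu_i)$, $D_i:=D(\cdot,\mu_i)$, bound $D_1 D_2\ge\theta^2$ and control $N_2,D_2$ by (1), thereby reducing the problem to estimating $|N_1(x)-N_2(x)|$ and $|D_1(x)-D_2(x)|$ by $\sW_2(\mu_1,\mu_2)$ uniformly in $x$. For the numerator I would take an optimal $\sW_2$-coupling $\pi$ of $\mu_1,\mu_2$ and use the pointwise identity
\[
\eta_1(|x-y_1|)k_1-\eta_1(|x-y_2|)k_2=\eta_1(|x-y_1|)(k_1-k_2)+\bigl[\eta_1(|x-y_1|)-\eta_1(|x-y_2|)\bigr]k_2,
\]
bounding the two summands by $\Theta|k_1-k_2|$ and $\|\eta_1'\|_\infty|y_1-y_2|\,k_2$ respectively. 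Integrating against $\pi$, with Cauchy--Schwarz on the second summand, produces
\[
|N_1(x)-N_2(x)|\le\bigl(\Theta+\|\eta_1'\|_\infty\,M_2(\mu_2)^{1/2}\bigr)\,\sW_2(\mu_1,\mu_2);
\]
the analogous estimate for $|D_1(x)-D_2(x)|$ is simpler and only gives a factor $\|\eta_2'\|_\infty\sW_2(\mu_1,\mu_2)$, since $b_2$ does not depend on $k$.

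The main obstacle I anticipate is not the quotient manipulation but the need to produce estimates that are genuinely uniform in $x$ and locally uniform in $\mu$ simultaneously. Uniformity in $x$ is free, as every bound above involves only $\|\eta_i^{(k)}\|_\infty$, $\theta$, $\Theta$ and moments of $\mu$. Local uniformity in $\mu$ is where one pays: the $k_2$-factor in the splitting forces the appearance of $M_2(\mu_2)^{1/2}$, and this is precisely the quantity that is uniformly bounded on $\sP_2$-relatively compact sets but may blow up on $\sP_1$-relatively compact ones, consistent with the remark following the definition of $F$ on why the model is set in $\sP_2$.
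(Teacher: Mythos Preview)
Your proposal is correct and follows essentially the same route as the paper: the two-sided bound $\frac{\theta}{\Theta}M(\mu)\le F\le\frac{\Theta}{\theta}M(\mu)$ for (1), a direct Lipschitz estimate on the quotient for (2), and for (3) the same algebraic splitting of $F(x,\mu_1)-F(x,\mu_2)$ combined with an optimal $\sW_2$-coupling and the pointwise decomposition of $\eta_1(|x-y_1|)k_1-\eta_1(|x-y_2|)k_2$, with Cauchy--Schwarz producing the $M_2^{1/2}$ factor. The only cosmetic differences are that in (2) you differentiate under the integral and invoke the quotient rule rather than expanding the difference quotient by hand, and in (3) your splitting carries $k_2$ and $M_2(\mu_2)^{1/2}$ where the paper carries $h_\mu$ and $M_2(\mu)^{1/2}$; these are symmetric choices leading to the same bound.
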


\begin{proof}
  \begin{enumerate}
\item We have
\begin{equation*}
  \frac{\theta}{\Theta}M(\mu)\leq F(x,\mu)\leq\frac{\Theta}{\theta}M(\mu).
\end{equation*}
As
\begin{equation*}
  M(\mu)\leq P_1(\mu) \leq P_2(\mu)^{\frac12}
\end{equation*}
and relatively compact subsets of $\sP_2$ are $2$-uniformly integrable, the claim follows.

\item We have
\begin{eqnarray*}
  \left\vert F(x,\mu)-F(z,\mu)\right \vert& =& \left \vert\frac{\int \eta_1(|y-x|)k \mu(dy,dk)}{\int \eta_2(|y-x|)\mu(dy,dk)}-\frac{\int \eta_1(|y-z|)k \mu(dy,dk)}{\int \eta_2(|y-z|)\mu(dy,dk)}\right\vert\\  & \leq&\frac{A\int \eta_2(|y-z|)\mu(dy,dk) +B\int \eta_1(|y-z|)k \mu(dy,dk)}{\int \eta_2(|y-z|)\mu(dy,dk)\int \eta_2(|y-x|)\mu(dy,dk)},
\end{eqnarray*}
where
$$
A=\int \left \vert\eta_1(|y-x|)-\eta_1(|y-z|)\right \vert k \mu(dy,dk), \quad B=\int \left \vert\eta_2(|y-z|)- \eta_2(|y-x|)\right \vert\mu(dy,dk).
$$
Now denote by  $L_{\eta_i}$ the Lipschitz constant of $\eta_i, i=1,2$ and $L_\eta=\max \{L_{\eta_1}, L_{\eta_2}\}$.
Since
$$
A\leq L_\eta |x-z|\int k \mu(dy,dk), \quad B\leq L_\eta |x-z|
$$
we get
$$|F(x,\mu)-F(z,\mu)|\leq \frac{2L_\eta\Theta}{\theta^2}\int k\mu(\ud y,\ud k)\vert x-z\vert,
$$
thus the claim.\\

\item Let $x$ be fixed and let $\mu, \nu \in \mathcal{P}_2$. Then
  \begin{align*}
    \left\vert F(x,\mu)\right.&-\left.F(x,\nu)\right\vert=\frac{\left\vert\llangle \mu,b_1\rrangle(x)\llangle\nu,b_2\rrangle(x)-\llangle\nu,b_1\rrangle(x)\llangle\mu,b_2\rrangle(x)\right\vert}{\llangle\mu,b_2\rrangle(x)\llangle\nu,b_2\rrangle(x)}\\
                              &\leq \frac{1}{\theta^2}\Big(\llangle\mu,b_1\rrangle(x)\left\vert\llangle\mu-\nu,b_2\rrangle(x)\right\vert+\llangle\nu,b_2\rrangle(x)\left\vert\llangle\mu-\nu,b_1\rrangle(x)\right\vert\Big)
  \end{align*}
Using the Rubinstein-Kantorovich characterization of $\sW_1$ we find
\begin{align*}
  \llangle\mu,b_1\rrangle(x)\left\vert\llangle\mu-\nu,b_2\rrangle(x)\right\vert&\leq \Theta\int k\mu(\ud y,\ud k)\left\vert\int\eta_2(\vert x-y\vert) d(\mu-\nu)\right\vert\\
     &\leq \Theta M(\mu)L_{\eta_2}\sW_1(\mu,\nu)\\
     &\leq \Theta M_2(\mu)^{\frac12}L_{\eta_2}\sW_2(\mu,\nu).
\end{align*}
Choose now random variables $(y_\mu,h_\mu)$ and $(y_\nu,h_\nu)$ with law $\mu$ and $\nu$, respectively, and with joint law $\xi\in\Gamma(\mu,\nu)$, where $\xi$ is the coupling of $\mu$ and $\nu$ that attains the infimum in the definition of $\sW_2$. Then
\begin{align*}
  \llangle\nu,b_2\rrangle(x)&\left\vert\llangle\mu-\nu,
  b_1\rrangle(x)\right\vert\leq \Theta \bE\left[\left\vert\eta_1\left(\left\vert x-y_\mu\right\vert\right)h_\mu - \eta_1\left(\left\vert x-y_\nu\right\vert\right)h_\nu\right\vert\right]\\
  &\leq \Theta \left(\bE\left[\left\vert \eta_1(\left\vert x-y_\mu\right\vert)-\eta_1\left(\left\vert x-y_\nu\right\vert\right)\right\vert h_\mu\right]+\bE\left[\eta_1\left(\left\vert x-y_\mu\right\vert\right)\left\vert h_\mu-h_\nu\right\vert\right]\right)\\
  &\leq \Theta\left(L_{\eta_1}\bE\left[\left\vert y_\mu-y_\nu\right\vert^2\right]^{\frac12}\bE\left[h_\mu^2\right]^{\frac12}+\Theta\bE\left[\left\vert h_\mu-h_\nu\right\vert^2\right]^{\frac12}\right)\\
  &\leq \Theta\left(L_{\eta_1}M_2(\mu)^{\frac12}+\Theta\right)\left(\int_{\left(\bR\times\bR_+\right)^2}\left(\left\vert y-z\right\vert^2+\left\vert h-k\right\vert^2\right)\xi\left(\ud y,\ud h;\ud z,\ud k\right)\right)^{\frac12}\\
  &=\Theta\left(L_{\eta_1}M_2(\mu)^{\frac12}+\Theta\right)
  \sW_2(\mu,\nu).
\end{align*}
Thus
\begin{equation*}
\left\vert F(x,\mu)-F(x,\nu)\right\vert\leq \frac{\Theta}{\theta^2}\left(M_2(\mu)^{\frac12}
\left(L_{\eta_1}+L_{\eta_2}\right)+\Theta\right)\sW_2(\mu,\nu),
\end{equation*}
and the last assertion follows immediately from $M_2(\mu)\leq P_2(\mu)$.
\end{enumerate}
	\end{proof}

        \begin{remark}
          For fixed $x$, the function $\mu\mapsto F(x,\mu)$ is in general only locally Lipschitz continuous, not globally. Indeed $F$ is differentiable in the sense of Lions (at least for $y\neq x$) and its gradient has first component of the form $\partial_\mu F(x,\mu)(y,k)=C_1(x-y)k-C_2(x-y)\int C_3(x-y)k\mu(\ud y,\ud k)$ with $C_1,C_2,C_3$ bounded functions; the quantity $\int\partial_\mu F(x,\mu)(y,k)\mu(\ud y,\ud k)$ is unbounded over $\sP_2$, whereas it should be bounded if $F$ were globally Lipschitz (see e.g. \cite[Vol.~I, Chapter~5]{CDL}).
        \end{remark}

\subsubsection{Properties of the value function}
We will characterize the value function defined in \eqref{eqn:V} as a solution to the HJB equation in \eqref{MFGb}, that we repeat here for the reader's convenience:
\begin{equation}\label{eqn:HJB}
		\begin{cases}
		-\partial_tV+\rho V=H_0(D_xV) +H_1(t,x,h,\mu(t), D_hV)\\
		\phantom{aaaaaaaaaaaa}+ \frac12 h^2\chi^2 D^2_{hh}V+\frac12 \epsilon^2 D^2_{xx}V+
		&\mbox{ in } (0,T)\times \R\times \R_{++}\\
		V(T, x, h)=0 &\mbox{ in } \R\times \R_{+}.
		\end{cases}
		\end{equation}
         \begin{proposition}\label{prop:MFGregularity}
Let $\mu(\cdot) \in C([0,T]; \mathcal{P_1})$. Then
           \begin{enumerate}
		\item $V(t,h,0)=0$ for every $(t,x)\in[0,T]\times\R$, and $V$ is strictly positive on $[0,T]\times\R\times\R_{++}$.
			\item $V$ is increasing with respect to $h$.
			\item\label{it:visc_3}  $V$ is a continuous viscosity solution to \eqref{eqn:HJB}.
		\item
		If, in addition, $\mu(\cdot)$ is uniformly H\"older continuous on $[0,T)$, then $V \in C^{1,2}((0,T)\times \R\times \R_{++})$.
              \end{enumerate}
	\end{proposition}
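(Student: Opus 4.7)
The plan is to address the four items in order, using standard stochastic-control machinery while handling carefully the degeneracy of \eqref{eq:evolhnew1} at $h=0$ and the unboundedness of $u_\sigma$ and $F$.

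\textbf{Items (1) and (2).} If $h_0=0$ then, since $f(0)=0$ and the diffusion $\chi h$ vanishes at zero, \eqref{eq:evolhnew1} gives $h(t)\equiv 0$ for every admissible control. The utility term in \eqref{eqn:J} then vanishes, and taking $v\equiv 0$ also annihilates $a(v)$, so $V(t_0,x_0,0)=0$. Strict positivity on $\{h_0>0\}$ is witnessed by the suboptimal choice $(v,s)\equiv(0,0)$: then $h(t)>0$ almost surely, $f(h(t))>0$, and (as long as $\mu(\cdot)$ is not identically supported in $\bR\times\{0\}$) $F(x(t),\mu(t))>0$ by Lemma~\ref{barh}(1), so the integrand in \eqref{eqn:J} is strictly positive on a set of positive measure. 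Monotonicity in $h$ follows from a pathwise comparison for the one-dimensional SDE \eqref{eq:evolhnew1}: with the same controls and the same Brownian motions, $h_0\leq h_0'$ yields $h(t;h_0)\leq h(t;h_0')$ almost surely, and since $u_\sigma$ and $f$ are both increasing the running payoff is monotone in $h_0$; passing to the supremum gives (2).

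\textbf{Item (3).} Continuity of $V$ on $[0,T]\times\bR\times\bR_+$ is obtained from $L^p$-stability for \eqref{eq:dyn_1}: the log-normal structure of the $h$-equation yields $\bE[h(t)^p]\leq C(p,T)h_0^p$, and the bounds on $f$, $F$ (Lemma~\ref{barh}(1)) and $A$ permit dominated convergence, so $(t_0,x_0,h_0)\mapsto J(t_0,x_0,h_0;v,s,\mu(\cdot))$ is continuous locally uniformly in $(v,s)$. Continuity up to $h_0=0$ follows by squeezing with items~(1)--(2) and the upper bound $V(t_0,x_0,h_0)\leq C h_0^{(1-\gamma)(1-\sigma)}$ derived from $f(h)\leq Lh$ and the explicit form of $u_\sigma$. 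The dynamic programming principle then follows from a standard measurable-selection argument, and the viscosity inequalities at an interior point $(t,x,h)$ with $h>0$ are obtained from It\^o's formula applied to a smooth test function touching $V$ at that point, with time localized so that $h$ remains in a compact subset of $\bR_{++}$.

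\textbf{Item (4) and main obstacle.} On any $Q\subset\subset(0,T)\times\bR\times\bR_{++}$ the coefficients $\epsilon^2$ and $\chi^2 h^2$ are bounded above and bounded below by positive constants, so the operator is uniformly parabolic in $Q$; by Lemma~\ref{barh}(2)--(3) combined with the uniform H\"older continuity of $\mu(\cdot)$, the coefficient $(t,x)\mapsto F(x,\mu(t))$ is jointly H\"older continuous. Together with Assumption~\ref{ass:DH_0} on $H_0$ and the explicit form of $H_1$ (which is $C^1$ in $q$ and locally Lipschitz in $(x,h,q)$ away from $h=0$), equation \eqref{eqn:HJB} fits the framework of semilinear parabolic equations with H\"older coefficients. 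Standard Schauder theory then produces a classical $C^{1,2;\alpha/2,\alpha}(Q)$ solution which, by viscosity uniqueness on $Q$ with Dirichlet data given by $V$ itself, coincides with $V$; exhausting $(0,T)\times\bR\times\bR_{++}$ by such $Q$'s gives $V\in C^{1,2}$ there. The main technical obstacle throughout is the interplay between the boundary degeneracy at $h=0$ and the unboundedness of $u_\sigma$: items (1) and (3) both require quantitative $L^p$-bounds on $h(t)$ that are uniform in the control, both to produce the boundary estimate and to apply dominated convergence, while item (4) is saved by being an interior statement, where the H\"older regularity of $\mu$ is transferred to the coefficients via Lemma~\ref{barh}(3).
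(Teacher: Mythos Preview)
Your proposal is correct and follows essentially the same route as the paper: pathwise comparison for items (1)--(2), $L^p$-moment bounds on $h(\cdot)$ plus $\eps$-optimal-control/dominated-convergence arguments for the continuity in item~(3), and localisation to a relatively compact interior set where the equation is uniformly parabolic with H\"older coefficients (then Schauder/Lieberman existence combined with viscosity uniqueness) for item~(4). The paper carries out the item-(3) estimates in much more explicit detail---separating continuity in $(x,h)$ from continuity in $t$ and tracking all constants---but the underlying ideas, including the squeeze argument at $h=0$ via the bound $V\lesssim h_0^{(1-\gamma)(1-\sigma)}$, are the same as yours.
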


We will also need some regularity of the first derivatives of $V$ up to the boundary; this is studied later in Proposition~\ref{prop:VFestimates}.

        \begin{proof}
          \begin{enumerate}

\item
	
The first statement is a direct consequence of the definition of the functional $J$ for the maximization problem, together with the assumptions on $f$, $a$ and $K$.\\
The second statemente follows simply choosing $v\equiv 0$ and any constant $s$ in $(0,1)$ and noticing that all terms are positive if $h(t)>0$.
	
\item
Set for simplicity
\begin{equation*}
  U_\sigma(x,h,s,\mu)=u_\sigma\left(\left[(1-s)f(h)\right]^{1-\gamma}F(x,\mu)^\gamma A(x)\right).
\end{equation*}
	Lat $h_1>h_2>0$ and $x_0, t_0 \in \bR\times \R_+$. For all $\eps>0$ there exists $(v_\eps(\cdot), s_\eps(\cdot)) \in \mathcal{K}$ such that
\begin{equation*}
      	V(t_0, x_0, h_2)\leq \mathbb{E}\left[\int_{t_0}^{T}e^{-\rho t}\left(U_\sigma(x_\eps(t),h_\eps(t),s_\eps(t),\mu(t))-a(v_\eps(t))\right)\ud t\right]+\eps,
      \end{equation*}
      where $x_\eps(\cdot), h_{\eps,2}(\cdot)$ are the trajectories with initial data $x_0, h_2$ respectively, controlled by $(v_\eps(\cdot), s_\eps(\cdot))$.	If $h_{\eps,1}(t)$ is the trajectory with initial condition $h_1$ controlled by $(v_\eps(\cdot), s_\eps(\cdot))$, we have
	\begin{multline}\label{eq:vincr}
	V(t_0, x_0, h_1)-V(t_0, x_0, h_2)\\\geq \mathbb{E}\left[\int_{t_0}^{T}e^{-\rho t}\left(U_\sigma\left(x_\eps(t),h_{\eps,1}(t), s_\eps(t),\mu(t)\right)-U_\sigma\left(x_\eps(t),h_{\eps,2}(t), s_\eps(t),\mu(t)\right)\right)\ud t\right]-\eps.
      \end{multline}
Since $h\mapsto U_\sigma(x,h,s,\mu)$ is increasing and, by standard results on SDEs with Lipschitz coefficients, $\P(h_{\eps,1}(t)\geq h_{\eps,2}(t)\forall t\in[0,T])=1$, we get the claim by arbitrariness of $\eps$.

\item First observe that, by easy adaptations of standard estimates for SDEs based on the Burkholder-Davis-Gundy inequality and Gronwall's lemma, we have that for any admissible control $(v(\cdot),s(\cdot))$ and any random initial condition $(x_0,h_0)$ in $L^p$, $p\geq 2$, the corresponding solution $(x(\cdot),h(\cdot))$ to (\ref{eq:dyn_1}) satisfies for every $t\in[t_0,T]$
  \begin{equation}
    \label{eq:est_h_1}
    \begin{aligned}
      \bE\left[\sup_{s\in[t_0,t]} \vert x(s)\vert^p\right]&\leq 3^{p-1}\bE\left[\left\vert x_0\right\vert\right]+3^{p-1}(\max K)^p (t-t_0)^p+\epsilon^p B_{0,p}(t-t_0)^{\frac{p}{2}},\\
      \bE\left[\sup_{s\in[t_0,t]}h(s)^p\right]&\leq 4^{p-1} e^{C_{2,p} (t-t_0)}\bE \left[h_0^p\right],
    \end{aligned}
  \end{equation}
  where
  \begin{gather}
    \label{eq:constants_B_0}
    B_{0,p}= \left(\frac{p^3}{2p-2}\right)^{\frac{p}{2}},\\
        \label{eq:constants_C_2}
        C_{2,p}=C_{2,p}(T,\mu(\cdot),p)=C_{1,p}L_f^p+(4T)^{p-1}\zeta^p+4^{p-1}\left(\frac{p^3}{2p-2}\right)^{\frac{p}{2}}\chi^p T^{\frac{p-2}{2}},\\
            \label{eq:constants_C_1}
    C_{1,p}=C_{1,p}(T,\mu(\cdot),p)=(4T)^{p-1}\left(\frac{\Theta}{\theta}\overline{M}(\mu(\cdot))\right)^p.
  \end{gather}
  Moreover if $(x_i(\cdot),h_i(\cdot))$, $i=1,2$ solve (\ref{eq:dyn_1}) with random initial conditions $(x_i,h_i)$ respectively and with the same control process $(v(\cdot),s(\cdot))$, we have for every $t\in[t_0,T]$ and $p\geq 2$
  \begin{equation}
    \label{eq:est_h_3}
    \bE\left[\sup_{s\in[t_0,t]}\left\vert x_1(s)-x_2(s)\right\vert^p\right]= \bE\left[\left\vert x_1-x_2\right\vert^p\right]
  \end{equation}
  and
  \begin{multline}
    \label{eq:est_h_2}
  \bE\left[\sup_{s\in[t_0,t]}\left\vert h_1(s)-h_2(s)\right\vert^2\right]\\\leq 4e^{2 C_{2,2} (t-t_0)}\left(\bE\left[\left\vert h_1-h_2\right\vert^2\right]+64 T C_{1,2}L_f^2L_\eta^2e^{\frac12 C_{2,4} (t-t_0)}\bE\left[h_2^4\right]^{\frac12}\bE\left[\left\vert x_1-x_2\right\vert^4\right]^{\frac12}\right).
  \end{multline}

  Fix now $t_0$ and deterministic initial conditions $x_1,h_1$; then there exist controls $(v_\eps(\cdot),s_\eps(\cdot))$ such that for the corresponding controlled processes $(x_{\eps,1}(\cdot),h_{\eps,1}(\cdot))$ we have
  \begin{equation*}
    V(t_0,x_1,h_1)\leq\bE\left[\int_{t_0}^Te^{-\rho t}\left(U_\sigma\left(x_{\eps,1}(t),h_{\eps,1}(t),s_\eps(t),\mu(t)\right)-a(v_\eps(t))\right)\ud t\right]+\eps.
    \end{equation*}
    Consider the processes $(x_{\eps,2}(\cdot),h_{\eps,2}(\cdot))$ that have a couple $(x_2,h_2)$ as initial conditions and are controlled by $(v_\eps(\cdot),s_\eps(\cdot))$. We then have
    \begin{align}
      \label{eq:V_cont_est_1}      V&(t_0,x_1,h_1)-V(t_0,x_2,h_2)\\
\nonumber                                                 &\leq\bE\left[\int_{t_0}^Te^{-\rho t}\left\vert U_\sigma\left(x_{\eps,1}(t),h_{\eps,1}(t),s_\eps(t),\mu(t)\right)-U_\sigma\left(x_{\eps,2}(t),h_{\eps,2}(t),s_\eps(t),\mu(t)\right)\right\vert\ud t\right]+\eps\\
\nonumber                                   &\leq \bE\left[\int_{t_0}^Te^{-\rho t}\left\vert U_\sigma\left(x_{\eps,1}(t),h_{\eps,1}(t),s_\eps(t),\mu(t)\right)-U_\sigma\left(x_{\eps,2}(t),h_{\eps,1}(t),s_\eps(t),\mu(t)\right)\right\vert\ud t\right]\\
\nonumber                                   &\phantom{\leq}+\bE\left[\int_{t_0}^Te^{-\rho t}\left\vert U_\sigma\left(x_{\eps,2}(t),h_{\eps,1}(t),s_\eps(t),\mu(t)\right)-U_\sigma\left(x_{\eps,2}(t),h_{\eps,2}(t),s_\eps(t),\mu(t)\right)\right\vert\ud t\right]+\eps
    \end{align}
    Set $\eta=(1-\gamma)(1-\sigma)$; we have, almost surely,
\begin{multline*}
  \left\vert U_\sigma\left(x_{\eps,1}(t),h_{\eps,1}(t),s_\eps(t),\mu(t)\right)-U_\sigma\left(x_{\eps,2}(t),h_{\eps,1}(t),s_\eps(t),\mu(t)\right)\right\vert\\\leq  \frac{f\left(h_{\eps,1}(t)\right)^\eta}{1-\sigma} \left(A(x_{\eps,1}(t))^{1-\sigma}\left\vert F(x_{\eps,1}(t),\mu(t))^{\gamma(1-\sigma)}-F(x_{\eps,2}(t),\mu(t))^{\gamma(1-\sigma)}\right\vert\right.\\\left.+F(x_{\eps,2}(t),\mu(t))^{\gamma(1-\sigma)}\left\vert A(x_{\eps,1}(t))^{1-\sigma}-A(x_{\eps,2}(t))^{1-\sigma}\right\vert\right).
\end{multline*}
As $0<\gamma(1-\sigma)\leq 1$ we have, using the fact that the equations for $x_{\eps,j}(t)$ can be solved pathwise,
\begin{align*}
  \left\vert F(x_{\eps,1}(t),\mu(t))^{\gamma(1-\sigma)}-F(x_{\eps,2}(t),\mu(t))^{\gamma(1-\sigma)}\right\vert\leq \left(\frac{2L_\eta\Theta}{\theta^2}\overline{M}(\mu(\cdot))\right)^{\gamma(1-\sigma)}\left\vert x_1-x_2\right\vert^{\gamma(1-\sigma)}.
\end{align*}
Therefore, since $A$ is Lispchitz with Lipschitz constant $L_A$, we have almost surely
\begin{multline*}
  \left\vert U_\sigma\left(x_{\eps,1}(t),h_{\eps,1}(t),s_\eps(t),\mu(t)\right)-U_\sigma\left(x_{\eps,2}(t),h_{\eps,1}(t),s_\eps(t),\mu(t)\right)\right\vert\\\leq\frac{f\left(h_{\eps,1}\right)^\eta}{1-\sigma}\left(\frac{\Theta}{\theta}\overline{M}(\mu(\cdot))\right)^{\gamma(1-\sigma)}\left(\overline{A}^{1-\sigma}\left(2\frac{L_\eta}{\theta}\right)^{\gamma(1-\sigma)}\left\vert x_1-x_2\right\vert^{\gamma(1-\sigma)}+L_A^{1-\sigma}\left\vert x_1-x_2\right\vert^{1-\sigma}\right),
\end{multline*}
yielding (by (\ref{eq:est_h_1}) and Jensen's inequality)
\begin{multline}
  \label{eq:est_delta_V_1}
\bE\left[\int_{t_0}^T e^{-\rho t}\left\vert U_\sigma\left(x_{\eps,1}(t),h_{\eps,1}(t),s_\eps(t),\mu(t)\right)-U_\sigma\left(x_{\eps,2}(t),h_{\eps,1}(t),s_\eps(t),\mu(t)\right)\right\vert\right]\ud t\\\leq \frac{2^\eta}{1-\sigma} L_f^\eta e^{\frac{\eta}{2}C_{2,2} (T-t_0)}(T-t_0) h_1^\eta\left(\frac{\Theta}{\theta}\overline{M}(\mu(\cdot))\right)^{\gamma(1-\sigma)}\\ \cdot\left(\overline{A}^{1-\sigma}\left(2\frac{L_\eta}{\theta}\right)^{\gamma(1-\sigma)}\left\vert x_1-x_2\right\vert^{\gamma(1-\sigma)}+L_A^{1-\sigma}\left\vert x_1-x_2\right\vert^{1-\sigma}\right).
\end{multline}
By (\ref{eq:est_h_2}) we also have
\begin{multline}
    \label{eq:est_delta_V_2}
  \bE\left[\int_{t_0}^T e^{-\rho t}\left\vert U_\sigma\left(x_{\eps,1}(t),h_{\eps,1}(t),s_\eps(t),\mu(t)\right)-U_\sigma\left(x_{\eps,1}(t),h_{\eps,2}(t),s_\eps(t),\mu(t)\right)\right\vert\ud t\right]\\\leq \frac{1}{1-\sigma}\overline{A}^{1-\sigma}\left(\frac{\Theta}{\theta}\overline{M}(\mu(\cdot))\right)^{\gamma(1-\sigma)}\left(2L_f\right)^\eta e^{\eta C_{2,2} (T-t_0)}(T-t_0)\\ \cdot\left(\left\vert h_1-h_2\right\vert^2+32TC_{1,2}L_f^2L_\eta^2h_2^2 e^{C_2 (T-t_0)}\left\vert x_1-x_2\right\vert^2\right)^{\frac{\eta}{2}}.
  \end{multline}
  As the reverse inequality in (\ref{eq:V_cont_est_1}) holds analogously and $\eps$ is arbitrary, continuity of $V$ with respect to $(x,h)$ follows.\\
	
  We proceed to prove continuity of $V$ with respect to $t$, locally uniformly with respect to $(x,h)$.\\
Again by standard SDEs estimates we have that, for any admissible control $(v(\cdot),s(\cdot))$, every couple of times $t_0\leq t_1\leq t_2\leq T$ and any $p\geq 2$, the corresponding solution to (\ref{eq:dyn_1}) satisfy
\begin{equation}
  \label{eq:est_x_t}
  \bE\left[\left\vert x(t_2)-x(t_1)\right\vert^p\right]\leq 2^{p-1} (K^pT^{\frac{p}{2}}+\epsilon^p)(t_2-t_1)^{\frac{p}{2}}
\end{equation}
and
\begin{equation}
  \label{eq:est_h_t}
  \bE\left[\left\vert h(t_2)-h(t_1)\right\vert^2\right]\leq 12 C_{2,2} e^{C_{2,2}(t_2-t_1)}h_0^2(t_2-t_1).
\end{equation}
  Fix $x_0 \in \bR, h_0 \in \R_+$ and take $t_1, t_2 \in \R_+, 0\leq t_1<t_2<T$. By the Dynamic Programming Principle we have that for $\eps>0$ there exist controls $(v_\eps(\cdot), s_\eps(\cdot))\in \calk$ with the corresponding solutions $(x_\eps(\cdot), h_\eps(\cdot))$  such that $x_\eps(t_1)=x_0, h_\eps(t_1)=h_0$ for which
\begin{align*}
  V(t_1, x_0, h_0)- V(t_2, x_\eps(t_2),&h_\eps(t_2))\\
  &\leq \bE\left[\int_{t_1}^{t_2}e^{-\rho t}U_\sigma(x_\eps(t), h_\eps(t), v_\eps(t), s_\eps(t), \mu(t))\, \ud t\right] +\eps,\\
  &\leq\frac{\overline{A}^{1-\sigma}}{1-\sigma}\left(\frac{\Theta}{\theta}\overline{M}(\mu(\cdot))\right)^{\gamma(1-\sigma)}\left(2L_f\right)^\eta h_0^\eta e^{\frac{\eta}{2} C_2(t_2-t_1)}(t_2-t_1)+\eps.
              \end{align*}		

Now set denote $\tilde x=x_\eps(t_2), \tilde h=h_\eps(t_2)$ and write
  \begin{equation}
    \label{eq:V_cont_est_2}
    V(t_1,x_0,h_0)-V(t_2,x_0,h_0)= \left[V(t_1,x_0,h_0)-V(t_2,\tilde{x},\tilde{h})\right]+\left[V(t_2,\tilde{x},\tilde{h})-V(t_2,x_0,h_0)\right].
  \end{equation}
The term in the first bracket of the right hand side has just been estimated. To study the term in the second bracket fix $\delta >0$ and take controls $v_\delta(\cdot), s_\delta (\cdot)$ with the corresponding trajectories $x_\delta(\cdot),h_\delta(\cdot)$ starting from $\tilde{x},\tilde{h}$ at time $t_2$ and such that
\begin{equation*}
V(t_2, \tilde x, \tilde h)\leq \bE\int_{t_2}^{T} e^{-\rho t}U_\sigma\left(x_\delta(t), h_\delta(t), v_\delta(t), s_\delta(t), \mu(t)\right)\ud t +\delta.
\end{equation*}
If $x_{\delta,0}(\cdot), h_{\delta,0}(\cdot)$ are the trajectories controlled by $v_\delta(\cdot),s_\delta(\cdot)$ and starting from $x_0,h_0$ at time $t_2$, we have, arguing as in (\ref{eq:V_cont_est_1}), (\ref{eq:est_delta_V_1}), (\ref{eq:est_delta_V_2}) and using first (\ref{eq:est_h_1}), (\ref{eq:est_h_3}), (\ref{eq:est_h_2}) and eventually (\ref{eq:est_x_t}), (\ref{eq:est_h_t}),
\begin{align*}
  V&(t_2,\tilde{x},\tilde{h})-V(t_2,x_0,h_0)\\
                            &\leq \delta + \frac{2^\eta}{1-\sigma}e^{\frac{\eta}{2}C_{2,2}T}\bE\left[\tilde{h}^2\right]^{\frac{\eta}{2}} L_f^\eta T \left(\frac{\Theta}{\theta}\overline{M}(\mu)\right)^{\gamma(1-\sigma)}\\
                            &\phantom{aa}\cdot\left(\overline{A}^{1-\sigma}\left(2\frac{L_\eta}{\theta}\right)^{\gamma(1-\sigma)}\bE\left[\left\vert \tilde{x}-x_0\right\vert^2\right]^{\frac{\gamma(1-\sigma)}{2}}+L_A^{1-\sigma}\bE\left[\left\vert \tilde{x}-x_0\right\vert^2\right]^{\frac{1-\sigma}{2}}\right)\\
                            &\phantom{aa}+\frac{\overline{A}^{1-\sigma}}{1-\sigma}\left(\frac{\Theta}{\theta}\overline{M}(\mu)\right)^{\gamma(1-\sigma)}\left(2L_f\right)^\eta e^{\eta C_{2,2} T}T\\
                            &\phantom{aa}\cdot\left(\bE\left[\left\vert \tilde{h}-h_0\right\vert^2\right]+64TC_{1,2}L_f^2L_\eta^2 e^{\frac12 C_{2,4} T}h_0^2\bE\left[\left\vert \tilde{x}-x_0\right\vert^4\right]^{\frac12}\right)^{\frac{\eta}{2}}\\
                            &\leq \delta + \frac{2^{1+\eta}}{1-\sigma}e^{\eta C_{2,2}T}h_0^\eta L_f^\eta T \left(\frac{\Theta}{\theta}\overline{M}(\mu)\right)^{\gamma(1-\sigma)}\\
                            &\phantom{aa}\cdot\left(\overline{A}^{1-\sigma}\left(\sqrt{8}\sqrt{K^2T+\epsilon^2}\frac{L_\eta}{\theta}\right)^{\gamma(1-\sigma)}(t_2-t_1)^{\frac{\gamma(1-\sigma)}{2}}+\left(\sqrt{2}\sqrt{K^2T+\epsilon^2}L_A\right)^{1-\sigma}(t_2-t_1)^{\frac{1-\sigma}{2}}\right)\\
                            &\phantom{aa}+\frac{\overline{A}^{1-\sigma}}{1-\sigma}\left(\frac{\Theta}{\theta}\overline{M}(\mu)\right)^{\gamma(1-\sigma)}\left(2L_f\right)^\eta e^{\eta C_{2,2} T}T\\
                            &\phantom{aaa}\cdot\left(12 C_{2,2}e^{C_{2,2}T}h_0^2(t_2-t_1)+64TC_{1,2}L_f^2L_\eta^2 e^{\frac12 C_{2,4} T}h_0^2\sqrt{8}\sqrt{T^2K^4+\epsilon^4}(t_2-t_1)\right)^{\frac{\eta}{2}}.
\end{align*}
Eventually from (\ref{eq:V_cont_est_2}) and the previoius computations we obtain that
\begin{multline*}
  V(t_2,x_0,h_0)-V(t_1,x_0,h_0)\\\leq \delta + \eps + \widetilde{C}_1 (t_2-t_1) + \widetilde{C}_2 (t_2-t_1)^{\frac{\gamma(1-\sigma)}{2}} + \widetilde{C}_3(t_2-t_1)^{\frac{1-\sigma}{2}} + \widetilde{C}_4(t_2-t_1)^{\frac{\eta}{2}};
\end{multline*}
as the constants appearing in this last formula can be bounded uniformly for $(x_0,h_0)$ in bounded sets, continuity of $V$ with respect to $t$, locally uniformly with respect to $(x.h)$, follows from arbitrariness of $\delta$ and $\eps$. The joint continuity in $(t,x,h)$ is then a consequence of Dini's Theorem.\\
Once continuity is established, it is standard to prove that the value function is a viscosity solution of the HJB equation in \eqref{MFGb}, see for example \cite{Users}.
\item 		The argument is  classical using regularity results for uniformly parabolic equations with uniformly H\"older coefficients. We give the proof for completeness. Note that we prove the above mentioned regularity of $V$ when $h$ belongs to the space $\R_{++}$, since the HJB equation in \eqref{MFGb} degenerates at $h=0$.\\
 Let $t_0 \in \R_{++}, x_0 \in \bR, h_0 \in\R_{++}$ and take first $\eps>0$ such that $ h_0-\eps\in \R_{++}$ and $(t_0-\eps, t_0+\eps)\in [0,T)$. Define
\begin{equation*}
  \mathcal{D}_\eps(t_0, x_0, h_0)=(t_0-\eps, t_0+\eps)\times (x_0-\eps, x_0+\eps) \times (h_0-\eps, h_0+\eps)
\end{equation*}
and denote by $\partial\mathcal{D}_\eps(t_0,x_0,h_0)$ its boundary. By the assumption on $\mu(t)$, the HJB equation in \eqref{MFGb} is a uniformly parabolic equation in $\mathcal{D}_\eps(t_0,x_0,h_0)$ with uniformly H\"older coefficients. Then we have uniqueness of viscosity solutions by the results in \cite{Users} and by Theorem 12.22 of \cite{Lieb} (with the assumptions of Theorem 12.16 in the same reference), existence of a solution in the class $C^{1,2}(\mathcal{D}_\eps(t_0, x_0, h_0))$. This classical solution is also a viscosity solution so that, by the uniqueness of viscosity solutions, it must coincide with $V$. Therefore we conclude that $V\in C^{1,2}(\mathcal{D}_\eps(t_0, x_0, h_0))$ and hence by the arbitrariness of $t_0, x_0, h_0$, we have that $V \in C^{1,2}((0,T)\times \R \times \R_{++})$.\\
\end{enumerate}		
\end{proof}

\begin{proposition}\label{prop:VFestimates}
		Let $V: [0,T]\times \mathbb{R}\times \R_+$ be the  value function V defined in \eqref{eqn:V}. Then for any $\mu\in C([0, T];  \mathcal{P}_1)$
we have that, for all $t \in [0,T]$,
$V(t,\cdot)$ belongs to $C^1(\mathbb{R}\times \R_+)$
and
\begin{equation}\label{eq:ineqhDV}
\sup_{(t,x,h)\in [0,T]\times\mathbb{R}\times \R_+}\left\{
|D_x V(t,x,h)|+|h D_h V(t,x,h)|
\right\}<+\infty
\end{equation}
	\end{proposition}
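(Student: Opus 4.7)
My strategy is to combine probabilistic comparison estimates for the controlled state processes with an approximation argument that reduces the case of a merely continuous $\mu$ to the regular setting of Proposition~\ref{prop:MFGregularity}. I would approximate $\mu\in C([0,T];\sP_1)$ by a sequence $\mu_n \in C^\alpha([0,T];\sP_2)$ with $\overline{M}(\mu_n(\cdot))$ and $\underline M(\mu_n(\cdot))$ controlled uniformly in $n$; by Proposition~\ref{prop:MFGregularity} the associated value functions $V_n$ belong to $C^{1,2}((0,T)\times\bR\times\bR_{++})$. The plan is then to derive a bound of the form \eqref{eq:ineqhDV} that is uniform in $n$ and to pass to the limit; the $C^1$ regularity of $V(t,\cdot)$ would follow from the uniform gradient bounds by an Arzel\`a--Ascoli argument, combined with the standard stability of value functions under perturbation of $\mu$.

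For the bound on $|D_xV|$, the key observation is that the $x$-dynamics in \eqref{eq:dyn_1} is translation invariant. Applying an $\eps$-optimal control for the problem started at $(t_0,x_1,h)$ to the initial condition $(t_0,x_2,h)$ yields trajectories whose $x$-components differ pathwise by the constant $x_1-x_2$, and whose $h$-components satisfy $\bE[\sup_{[t_0,t]}|h_1(s)-h_2(s)|^2]\leq Ch^2|x_1-x_2|^2$ by \eqref{eq:est_h_2}. Rather than relying on the pointwise H\"older estimate on $u_\sigma$ used in the proof of Proposition~\ref{prop:MFGregularity}, I would exploit the product structure of the argument of $u_\sigma$: writing it as $c(x,\mu)\cdot[(1-s)f(h)]^{1-\gamma}$ with $c(x,\mu)=A(x)F(x,\mu)^\gamma$ bounded above and below and Lipschitz in $x$ (Lemma~\ref{barh}), $u_\sigma$-differences factor multiplicatively as $[(1-s)f(h)]^{(1-\gamma)(1-\sigma)}$ times a Lipschitz-in-$x$ quantity, yielding a Lipschitz rather than H\"older dependence on $x_1-x_2$.

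For the bound on $|hD_hV|$ I would exploit the log-multiplicative nature of the $h$-dynamics. By It\^o's formula, $Y(t)=\log h(t)$ solves an SDE with constant diffusion coefficient $\chi$ and drift involving $f(h)/h$, which is bounded since $f$ is Lipschitz with $f(0)=0$. Comparing trajectories started from $h_1,h_2>0$ under the same control, a Gronwall-type argument gives $\bE[\sup_{[t_0,t]}|\log h_1(s)-\log h_2(s)|^2]\leq C|\log h_1-\log h_2|^2$. Combining this with the homogeneity identity $u_\sigma(c z)=c^{1-\sigma}u_\sigma(z)$ applied to the rescaling of $z$ in $f(h)$, I would derive $|V(t,x,h_1)-V(t,x,h_2)|\leq C|\log h_1-\log h_2|$ for $h_1,h_2>0$, which is equivalent to a bound on $|hD_hV|$ (extended by continuity to $\{h=0\}$, where $V$ vanishes by Proposition~\ref{prop:MFGregularity}).

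The main obstacle I foresee is the degenerate behavior of $u_\sigma$ near $z=0$: since $\sigma\in(0,1)$ the derivative $u_\sigma'(z)=z^{-\sigma}$ blows up there, and the pointwise arguments used in the proof of Proposition~\ref{prop:MFGregularity} yield only H\"older continuity in $x$ and $h$. The multiplicative/ratio trick described above converts this into Lipschitz (or log-Lipschitz) bounds, but crucially requires uniform lower bounds on $A$ and on $F(\cdot,\mu)$; tracking $\underline M(\mu_n(\cdot))$ uniformly along the approximating sequence is therefore the most delicate point. The factor $h$ in $hD_hV$ is natural in this context, since it compensates precisely the $h^{-1}$ factor that appears when reparametrizing derivatives in the $\log h$ variable and makes the estimate compatible with the boundary condition $V(t,x,0)=0$; a further refined analysis through the HJB equation and a maximum-principle argument may also be needed to absorb the $h$-dependent amplitude $[(1-s)f(h)]^{(1-\gamma)(1-\sigma)}$ that naturally appears in the probabilistic comparison.
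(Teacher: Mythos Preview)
Your route is genuinely different from the paper's. The paper does not use probabilistic comparison at all: after time reversal it performs the change of variable $y=\log h$, which turns the degenerate operator $\frac12\chi^2h^2D^2_{hh}$ into the constant-coefficient operator $\frac12\chi^2D^2_{yy}-\frac12\chi D_y$ and the HJB equation into a uniformly parabolic semilinear equation on $(0,T)\times\bR^2$. It then applies the analytic semigroup/mild solution theory of \cite[Ch.~7]{Lunardi} in a weighted space $C_w(\bR^2)$ with weight $e^{\pm(1-\gamma)(1-\sigma)y}$, chosen so that the transformed nonlinearity $H_1(T-t,x,e^y,\mu,e^{-y}D_yw)$ (whose zeroth-order part grows like $e^{\eta y}$ while its gradient in $D_yw$ is bounded, by Lemma~\ref{lem:DpH}) maps $C^1_w$ to $C_w$. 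The resulting $C^1_w$ bound on $w$ is then pulled back, using $D_yw=hD_hV$, to obtain \eqref{eq:ineqhDV}. No approximation of $\mu$, no lower bound on $\underline M(\mu)$, and no maximum principle are used.

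The concrete gap in your proposal is exactly the one you flag in the last paragraph, and it is not a detail: your comparison estimates, even with the multiplicative/ratio trick (which is correct and does upgrade the H\"older dependence on $x$ and on $\log h$ to Lipschitz), inevitably produce the amplitude $[(1-s)f(h)]^{(1-\gamma)(1-\sigma)}\sim h^\eta$ in front of each increment. After taking expectations and using \eqref{eq:est_h_1} this yields $|D_xV|+|hD_hV|\le C\,h^\eta$, not the uniform bound \eqref{eq:ineqhDV}. A maximum-principle argument cannot simply ``absorb'' this factor, because the $h^\eta$ growth comes from the running cost itself, not from a lower-order perturbation one can dominate by a supersolution; any PDE argument that handles it must, one way or another, work in a function class that accommodates $e^{\eta y}$ growth after the log change of variable, which is precisely what the weighted-semigroup framework does. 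Unless you supply an explicit mechanism of that kind, the probabilistic scheme stops at the weighted estimate and does not reach \eqref{eq:ineqhDV}.
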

	\begin{proof}
Consider, for given $\phi\in C^0(\mathbb{R}\times \R_{+}$), the HJB equation (i.e. the first equation of
\eqref{eqn:HJB} with generic final datum $\phi$):
\begin{equation}\label{HJBperreg}
\begin{cases}
 -\partial_tV+\rho V=H_1(t,x,h,\mu(t), D_hV)+H_0(D_xV)&\\
 \phantom{aaaaaaaaaaaa}+ \frac12 \chi^2h^2 D^2_{hh}V+\frac12 \epsilon^2 D^2_{xx}V &\mbox{ in } (0,T)\times \mathbb{R}\times \R_{++}\\
 V(T,x,h)=\phi(x,h) &\mbox{ in } \mathbb{R}\times \R_{+}\\
 V(t,x,0)=0 &\mbox{ in } (0,T) \times \mathbb{R}
\end{cases}
\end{equation}
By reversing time, i.e. calling $v(t,x,h):=V(T-t,x,h)$ this PDE becomes
\begin{equation}\label{HJBperregtimerev}
\begin{cases}
\partial_tv=\rho v +H_0(D_xv) + H_1(T-t,x,h,\mu(T-t), D_hv)&\\
\phantom{aaaaaaaaa}+\frac12 \epsilon^2 D^2_{xx}v
+\frac12 \chi^2h^2 D^2_{hh}v&\mbox{ in } (0,T)\times \mathbb{R}\times \R_{++}\\
 v(0,x,h)=\phi(x,h) &\mbox{ in } \mathbb{R}\times \R_{+}\\
 v(t,x,0)=0 &\mbox{ in } (0,T) \times \mathbb{R}
\end{cases}
\end{equation}
Now we take the following exponential change of variable
$e^y=h$ or $y=\ln h$. We write
$w(t,x,y)=v(t,x,e^y)$.
With this change of variable the above HJB PDE,
after simple computations,
becomes
\begin{equation}\label{HJBperregCV}
\begin{cases}
\partial_tw= \rho w
  +H_0(D_x w)+ H_1(T-t,x,e^y,\mu(T-t), e^{-y}D_y w)& \\
  \phantom{aaaaaaaaaa}+\frac12 \epsilon^2 D^2_{xx}w
+\frac12 \chi^2 D^2_{yy}w -\frac{1}{2}\chi D_{y}w  &\mbox{ in } (0,T)\times \mathbb{R}\times \R\\
 w(0,x,y)=\phi(x,e^y) &\mbox{ in } \mathbb{R}\times \R\\
w(t,x,-\infty)=0 & \mbox{ in } (0,T) \times \mathbb{R}
\end{cases}
\end{equation}
Now, we consider the linear part of such a PDE and
we call, formally, $\mathcal{L}$ the corresponding linear differential operator, writing,
for any regular enough map
$\psi:\mathbb{R}\times \R\to \R$,
$$
\mathcal{L}\psi:=\frac12 \epsilon^2 D^2_{xx}\psi
+\frac12 \chi^2 D^2_{yy}\psi -\frac{1}{2}\chi D_{y}w +\rho \psi
$$
This is a nondegenerate elliptic operator which can be studied using the theory of analytic semigroups
well developed e.g. in the first three chapters of the book of Lunardi \cite{Lunardi}.
In this way we can see the PDE \eqref{HJBperregCV} as a semilinear equation in a Banach space $\mathcal{X}$ (driven by the operator $\mathcal{L}$) of the type studied in Chapter 7 of the above book of Lunardi \cite{Lunardi}.
To be more precise here the space $\mathcal{X}$ is the weighted space
$$
C_w(\R^2,\R):=\left\{
f\in C(\R^2,\R)\,: \; \hbox{ the map $z\mapsto e^{(1-\gamma)(1-\sigma)z}f(z)$ belongs to $C_b(\R^2,\R)$}
\right\}
$$
where the weight is chosen in way that the value function, after the change of variable, belongs to that space.
We can then apply the results of
\cite[Chapter 7]{Lunardi}, more precisely
Theorem 7.1.3 and Proposition 7.2.1.
Such results establish that, under suitable assumptions on
the nonlinearities the above PDE \eqref{HJBperregCV} has a unique mild solution satisfying the estimate
$$
\sup_{t\in [0,T]}\|w(t,\cdot)\|_{C^1_w(\mathbb{R}\times \R)}<+\infty
$$
where
$$
C^1_w(\R^2,\R):=\left\{
f\in C^1(\R^2,\R)\,: \; \hbox{ the map $z\mapsto e^{(1-\gamma)(1-\sigma)z}f(z)$ belongs to $C^1_b(\R^2,\R)$}
\right\}.
$$
This implies, in particular (reversing the change of variable) that $V$ satisfies \eqref{eq:ineqhDV}, as required.

Now the value function $V$ is a classical solution of the HJB equation \eqref{HJBperreg}, hence, after the above changes of variables in $t$ and in $y$ it must also be a classical solution of \eqref{HJBperregCV}.
This implies that it is also a mild solution of
\eqref{HJBperregCV}, hence that it satisfies the required inequality.

It then remains to check that the nonlinearity here satisfies the assumptions of Theorem 7.1.3 and Proposition 7.2.1 of
\cite[Chapter 7]{Lunardi}.
This is immediate consequence of the Remark 2.4 (for the part of $H_0$) and of the subsequent Lemma 3.4 (for the part of $H_1$).
\end{proof}

\subsubsection{Properties of the Hamiltonian.}
The function $H_1$ and its derivatives can be characterized explicitly, leading to some useful properties.
\begin{lemma}\label{lem:DpH}
  There exist continuous functions $g,g_1\colon\bR_+\to\bR_+$ such that, for every $x \in \R, h \in \R_+, \mu \in \mathcal{P}_1$ and $p\in\R_+$,
  \begin{equation*}
    \left\vert H_1(x,h,\mu,p)\right\vert\leq g(M(\mu))(hp+h^\eta)
  \end{equation*}
  and
  \begin{equation*}
    \left\vert D_pH_1(x,h,\mu,p)\right\vert\leq g_1(M(\mu))h.
  \end{equation*}
\end{lemma}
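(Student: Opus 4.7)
The plan is to expand $H_1$ using the explicit CRRA form of $u_\sigma$ and to exploit the fact that the resulting integrand is affine in $p$ and (strictly) concave in $s$, so that the supremum can be estimated directly and its derivative in $p$ accessed via Danskin's theorem. Setting $\eta:=(1-\gamma)(1-\sigma)\in(0,1)$, the first step is to rewrite
\[
\Phi(s;x,h,\mu,p) := \bigl(s f(h) F(x,\mu) - \zeta h\bigr)p + \frac{A(x)^{1-\sigma}}{1-\sigma}(1-s)^\eta f(h)^\eta F(x,\mu)^{\gamma(1-\sigma)},
\]
so that $H_1(x,h,\mu,p)=\sup_{s\in[0,1]}\Phi(s;\ldots)$. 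For each $s$, $\Phi$ is affine in $p$; for each $p$, $\Phi$ is strictly concave in $s$ on $[0,1]$ because of the exponent $\eta<1$ in the $(1-s)^\eta$ factor, provided its coefficient is strictly positive.

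For the first estimate I would combine $f(h)\leq L_f h$ (from Lipschitzness of $f$ together with $f(0)=0$) with $F(x,\mu)\leq\tfrac{\Theta}{\theta}M(\mu)$ from Lemma~\ref{barh}(1), and the trivial inequalities $s,(1-s)^\eta\leq 1$ on $[0,1]$, to obtain
\[
H_1 \leq L_f\tfrac{\Theta}{\theta}M(\mu)\,hp + \frac{\overline A^{1-\sigma}}{1-\sigma}L_f^\eta\bigl(\tfrac{\Theta}{\theta}M(\mu)\bigr)^{\gamma(1-\sigma)}h^\eta,
\]
whereas $H_1\geq\Phi(0)\geq-\zeta h p$ since $p\geq 0$ and the remaining term of $\Phi(0)$ is non-negative. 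Together these give $|H_1|\leq g(M(\mu))(hp+h^\eta)$ for an explicit continuous $g\colon\R_+\to\R_+$.

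For the derivative bound I would apply Danskin's theorem to the family $p\mapsto\Phi(s,p)$ parametrized by $s\in[0,1]$. Strict concavity of $\Phi(\cdot,p)$ in the nondegenerate regime provides a unique maximizer $s^\ast=s^\ast(x,h,\mu,p)\in[0,1]$, and since $\Phi$ is affine (hence smooth) in $p$ this yields
\[
D_pH_1(x,h,\mu,p) = \partial_p\Phi(s^\ast,p) = s^\ast f(h)F(x,\mu)-\zeta h,
\]
so that $|D_pH_1|\leq f(h)F(x,\mu)+\zeta h \leq \bigl(L_f\tfrac{\Theta}{\theta}M(\mu)+\zeta\bigr)h=:g_1(M(\mu))h$, as claimed.

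The only real subtlety, and the step I expect to require a bit of care, is the applicability of Danskin's theorem: uniqueness of the argmax fails precisely when the coefficient $A(x)^{1-\sigma}f(h)^\eta F(x,\mu)^{\gamma(1-\sigma)}$ vanishes, i.e.\ when $h=0$ or $M(\mu)=0$ (using the characterization $F(x,\mu)=0\iff M(\mu)=0$ recorded right after the definition of $F$). In those degenerate cases $\Phi$ becomes linear in $s$, the sup is attained at a corner of $[0,1]$, and one can read off directly $H_1=-\zeta hp$ (or $0$ if $h=0$) and $D_pH_1=-\zeta h$ (or $0$), for which both bounds continue to hold with the same $g$ and $g_1$.
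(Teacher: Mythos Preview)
Your proof is correct and takes a genuinely different route from the paper's. The paper computes the maximizer $\bar s(p)$ explicitly, determines the threshold $p_0$ at which $\bar s$ leaves the boundary $s=0$, and derives closed-form expressions for $H_1$ and $D_pH_1$ in the two regimes $p\le p_0$ and $p>p_0$; the bounds are then read off from these formulas case by case. You instead bound $H_1$ from above by the trivial inequalities $s,(1-s)^\eta\le 1$ and from below by $\Phi(0)\ge -\zeta hp$, never locating the maximizer, and you obtain $D_pH_1$ through Danskin's envelope theorem using only uniqueness of the argmax (from strict concavity in $s$) and the affine dependence on $p$. Your argument is considerably shorter for the purposes of this lemma. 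The paper's more laborious route, however, pays off downstream: the explicit formula for $D_pH_1$ (equation~\eqref{eq:al_DH1_1}) and the resulting expression for $D^2_{pp}H_1$ (equation~\eqref{eq:D2_bdd}) are reused in Lemma~\ref{lem:DpH_2} to prove local Lipschitz continuity of $D_pH_1$ in all variables, and in the well-posedness analysis of the Fokker--Planck equation. So your approach is cleaner in isolation, while the paper's generates structural information needed later.
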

\begin{proof}
  Let us set, for every fixed tuple $(x,h,\mu,p)$
  \begin{equation*}
    k(s)=H_1^{CV}(s,x,h,\mu,p)=sf(h)F(x,\mu)p-\zeta hp+u_\sigma\left(A(x)(1-s)^{1-\gamma}f(h)^{1-\gamma}F(x,\mu)^\gamma\right);
  \end{equation*}
  we thus have
  \begin{equation*}
    H_1(x,h,\mu,p)=\sup_{s\in[0,1]}k(s).
  \end{equation*}
  Clearly $H_1(x,0,\mu,p)=0$. If $M(\mu)=0$ then $\mu(\R\times\{0\})=1$ so that $H_1(x,h,\mu,p)=-\zeta hp$; in these cases the claim is evidently true, and morevoer $D_pH_1$ does not depend on $p$. Therefore we now assume that $h>0$ and $M(\mu)>0$.

  As we are interested in the dependence on $p$, to keep notation simple we set
  \begin{equation*}
    a=a(x,h,\mu):=f(h)F(x,\mu),\quad b=b(x,h,\mu):=(1-\sigma)u_\sigma\left(A(x)f(h)^{1-\gamma}F(x,\mu)^{\gamma}\right),
  \end{equation*}
  so that we can write
  \begin{equation*}
    k(s)=aps-\zeta hp+\frac{(1-s)^\eta}{1-\sigma}b\ .
  \end{equation*}
  Then
  \begin{equation*}
  k^\prime(s)=ap-(1-\gamma)b(1-s)^{\eta-1},
\end{equation*}
which is zero for the only value
\begin{equation*}
  \bar{s}(p)=1-\left(\frac{(1-\gamma)b}{ap}\right)^{\frac{1}{1-\eta}},
\end{equation*}
which is a point of local maximum for $k$.\\
We have $\bar{s}(p)\in[0,1]$ if and only if
\begin{equation*}
  0\leq (1-\gamma)\frac{b}{ap}=(1-\gamma)A(x)^{1-\sigma}f(h)^{\eta-1} F(x,\mu)^{\gamma(1-\sigma)-1}\frac1p\leq 1;
\end{equation*}
we set
\begin{equation*}
  p_0=(1-\gamma)\frac{b}{a}=(1-\gamma)A(x)^{1-\sigma}f(h)^{\eta-1} F(x,\mu)^{\gamma(1-\sigma)-1},
\end{equation*}
so that $p_0>0$ for every $x,h,\mu$ and $\bar{s}(p)\in[0,1]$ if and only if $p\geq p_0$. Clearly for $p\leq p_0$ we have that $\sup_{s\in[0,1]}k(s)=\frac{b}{1-\sigma}-\zeta hp$ is attained at $s=0$. Therefore we get
\begin{equation*}
  \bar{s}(p)=
  \begin{cases}
    0&\text{ for }p\leq p_0,\\
    1-\frac{p_0}{p}^{\frac{1}{1-\eta}}&\text{ for }p>p_0,
  \end{cases}
\end{equation*}
so that
\begin{align}
\nonumber  H_1(&x,h,\mu,p)=
  \begin{cases}
    \frac{b}{1-\sigma}-\zeta hp&\text{ for }p\leq p_0\\
    \bar{s}(p)a-\zeta hp+\frac{b}{1-\sigma}\left(1-\bar{s}(p)\right)^\eta&\text{ for }p> p_0
  \end{cases}\\
\label{eq:al_H1_1}                &=
                  \begin{cases}
                    u_\sigma\left(A(x)f(h)^{1-\gamma}F(x,\mu)^\gamma\right)-\zeta hp&\text{ for }p\leq p_0\\
                    \left(1-\left(\frac{p_0}{p}\right)^{\frac{1}{1-\eta}}\right)f(h)F(x,\mu)p-\zeta hp+u_\sigma\left(A(x)f(h)^{1-\gamma}F(x,\mu)^\gamma\right)\left(\frac{p_0}{p}\right)^{\frac{\eta}{1-\eta}}&\text{ for }p> p_0
                  \end{cases}\\
  \label{eq:al_H1_2}                &=
                                      \begin{cases}
                                            \frac{1}{1-\sigma}A(x)^{1-\sigma}f(h)^{\eta}F(x,\mu)^{\gamma(1-\sigma)}-\zeta hp&\text{ for }p\leq p_0\\
    \left(f(h)F(x,\mu)-\zeta h\right)p+\frac{1-\eta}{\eta}p_0^{\frac{1}{1-\eta}}f(h)F(x,\mu)p^{-\frac{\eta}{1-\eta}}&\text{ for }p> p_0.
                                      \end{cases}
\end{align}
  From (\ref{eq:al_H1_1}) it is immediately seen that $H_1(x,h,\mu,\cdot)$ is continuous and bounded in $p$ (and actually bounded uniformly with respect to $(h,\mu)$ in bounded sets), because $p_0>0$. Morevoer
  \begin{equation}
    \label{eq:al_DH1_1}
    D_pH_1(x,h,\mu,p)=  -\zeta h+ f(h)F(x,\mu)\left(1-\left(\frac{p_0}{p}\right)^{\frac{1}{1-\eta}}\right)\ind_{(p_0,+\infty)}(p),
  \end{equation}
  which is also continuous and bounded in $p$ for fixed $(x,h,\mu)$ (and, again, uniformly bounded for $(h,\mu)$ in bounded sets).\\
  We have from (\ref{eq:al_H1_2}) that for $p\leq p_0$
\begin{equation*}
  \left\vert H_1(x,h,\mu,p)\right\vert\leq \frac{\overline{A}^{1-\sigma}}{1-\sigma}\left(\frac{\Theta}{\theta}M(\mu)\right)^{\gamma(1-\sigma)}h^\eta+\zeta hp.
\end{equation*}
When $p>p_0$, since $p^{-\frac{\eta}{1-\eta}}$ is decreasing, we have
  \begin{align*}
    \left\vert H_1(x,h,\mu,p)\right\vert&\leq \left(\frac{\Theta}{\theta}M(\mu)-\zeta\right)hp
    +\frac{1-\eta}{\eta}p_0f(h)F(x,\mu)\\
    &=\left(\frac{\Theta}{\theta}M(\mu)-\zeta\right)
    hp+(1-\gamma)\frac{1-\eta}{\eta}\overline{A}^{1-\sigma}
    \left(\frac{\Theta}{\theta}M(\mu)\right)^{\gamma(1-\sigma)}h^\eta.
  \end{align*}
  Therefore
  \begin{equation*}
    g(z)=\max\left\{\zeta,\overline{A}^{1-\sigma}\left(\frac{\Theta}{\theta}z\right)^{\gamma(1-\sigma)},(1-\gamma)\frac{1-\eta}{\eta}\overline{A}^{1-\sigma}\left(\frac{\Theta}{\theta}z\right)^{\gamma(1-\sigma)},\frac{\Theta}{\theta}z-\zeta\right\}
  \end{equation*}
  (which is clearly continuous) is such that
  \begin{equation*}
    \left\vert H_1(x,h,\mu,p)\right\vert\leq g(M(\mu))(hp+h^\eta)
  \end{equation*}
  for every $(x,\mu)$.\\
  Eventually, from (\ref{eq:al_DH1_1}) one immediately gets
  \begin{equation}
    \label{eq:bound_DH_twosided}
    -\zeta h\leq D_pH_1(x,h,\mu,p) \leq \left(\zeta+2F(x,\mu)\right)h,
  \end{equation}
from which the claim follows setting
\begin{equation*}
  g_1(z)=\zeta+2\frac{\Theta}{\theta}z.
\end{equation*}
\end{proof}
\begin{lemma}
  \label{lem:DpH_2}
  The function  $D_pH_1$ is locally Lispchitz continuous for $M(\mu)$ away from $0$, in the following sense: for every $N\geq 1$ there exists a positive constant $C_N$ such that for every $\mu_1,\mu_2$ satisfying $\frac1N<M(\mu_1)\wedge M(\mu_2)\leq \left(M_2(\mu_1)\vee M_2(\mu_2)\right)^{\frac12}<N$, for every $h_1\leq h_2<N$, for every $x_1,x_2$ and for every $p_1,p_2$ we have
\begin{multline*}
  \left\vert D_pH_1(x_1,h_2,\mu_1,p_1)-D_pH_1(x_2,h_2,\mu_2,p_2)\right\vert\\ \leq C_N\left(\vert x_1-x_2\vert + \vert h_1-h_2\vert + \sW_1(\mu_1,\mu_2) + \vert p_1-p_2\vert\right).
\end{multline*}
\end{lemma}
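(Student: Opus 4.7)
The plan is to work directly from the explicit formula for $D_pH_1$ obtained in the proof of Lemma~\ref{lem:DpH} (equation~\eqref{eq:al_DH1_1}). The first step I would take is to rewrite it in a form that removes the apparent singularity of $p_0(x,h,\mu) = (1-\gamma)A(x)^{1-\sigma}f(h)^{\eta-1}F(x,\mu)^{\gamma(1-\sigma)-1}$ as $h \to 0$. Substituting the definition of $p_0$ into $f(h)F(x,\mu)(p_0/p)^{1/(1-\eta)}$, the factor $f(h)^{-1}$ coming from $p_0^{1/(1-\eta)}$ cancels the explicit $f(h)$ in front, leaving (with $\eta = (1-\gamma)(1-\sigma)$)
\begin{equation*}
D_pH_1(x,h,\mu,p) = -\zeta h + \bigl(f(h)F(x,\mu) - \Psi(x,\mu)\,p^{-1/(1-\eta)}\bigr)^+,
\end{equation*}
where $\Psi(x,\mu) := [(1-\gamma)A(x)^{1-\sigma}]^{1/(1-\eta)} F(x,\mu)^{(1-\sigma)(2\gamma-1)/(1-\eta)}$ and $(a)^+ := \max(0,a)$. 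A direct check confirms that $\Psi(x,\mu) p_0^{-1/(1-\eta)} = f(h)F(x,\mu)$, so the argument of $(\cdot)^+$ changes sign exactly at $p = p_0$ and the expression is continuous in all arguments.

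Next I would split the difference by triangle inequality, changing one variable at a time, reducing the claim to Lipschitz continuity of $D_pH_1$ separately in each variable, uniformly in the others within the constrained set. The $h$-direction is immediate: $\Psi$ does not depend on $h$, $f$ is Lipschitz, and $(\cdot)^+$ is $1$-Lipschitz, giving a constant $\zeta + L_f\Theta N/\theta$. For the directions $x$, $\mu$, and $p$ the obstruction is that $p^{-1/(1-\eta)}$ is unbounded as $p \to 0$. The key observation is that $D_pH_1$ differs from $-\zeta h$ only on the active region $\{p > p_0\}$, where $\Psi(x,\mu) p^{-1/(1-\eta)} < f(h)F(x,\mu) \leq L_f N\cdot\Theta N/\theta$; moreover $\Psi(x,\mu)$ is bounded above and away from $0$ on the constrained set (using the bounds on $A$ together with the upper and lower bounds on $F(x,\mu)$ provided by Lemma~\ref{barh} under the assumption $1/N < M(\mu) \leq M_2(\mu)^{1/2} < N$). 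Hence on the active region both $p^{-1/(1-\eta)}$ itself and $p$ are uniformly bounded away from $\infty$ and from $0$, respectively, which makes $p\mapsto p^{-1/(1-\eta)}$ uniformly Lipschitz, and in combination with Lemma~\ref{barh} makes $(x,\mu) \mapsto f(h)F(x,\mu) - \Psi(x,\mu)p^{-1/(1-\eta)}$ uniformly Lipschitz in $x$ and in $\mu$.

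The technical obstacle I anticipate is the case in which one of the two compared points lies in the active region while the other does not. I would handle it via an intermediate-value argument: along the straight-line interpolation in the Euclidean variables (and along a displacement geodesic $\{\mu^t\}$ between $\mu_1$ and $\mu_2$, which satisfies $\sW_j(\mu^s,\mu^t)=|s-t|\sW_j(\mu_1,\mu_2)$, in the $\mu$-direction), continuity of the argument of $(\cdot)^+$ produces an intermediate point at which it vanishes, and $D_pH_1$ equals $-\zeta h$ there. The change from that point to the inactive endpoint then contributes at most $\zeta$ times the $h$-variation, while the change from the active endpoint to the intermediate point is controlled by the Lipschitz estimates of the previous paragraph, since along that subpath $p$ stays above a positive lower bound $p_0^{\min}$ that depends only on $N$ and the data. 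Summing the four variable-wise contributions yields the claim with $C_N$ depending only on $N$ and on $\zeta$, $\gamma$, $\sigma$, $\theta$, $\Theta$, $\underline A$, $\overline A$, $L_f$, $L_A$, $L_{\eta_1}$, $L_{\eta_2}$; the precise Wasserstein index in the bound is the one furnished by Lemma~\ref{barh} under the assumed second-moment control.
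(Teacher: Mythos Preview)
Your proposal is correct and shares the paper's overall architecture---split by triangle inequality into one-variable increments and, for each, distinguish whether the endpoints lie in the ``active'' region $p>p_0$---but the organization differs. The paper works directly from~\eqref{eq:al_DH1_1}, handles the $p$-increment via the explicit bound~\eqref{eq:D2_bdd} on $D^2_{pp}H_1$, and for each of the remaining variables introduces an auxiliary crossover quantity ($h_0$, $F_0$, $G_0$) so that the three cases (both inactive, both active, mixed) can be computed by hand with fully explicit constants. Your rewriting $D_pH_1=-\zeta h+\bigl(f(h)F-\Psi\,p^{-1/(1-\eta)}\bigr)^+$ makes the $h$-direction a one-line application of the $1$-Lipschitz property of $(\cdot)^+$, and your intermediate-value device treats all mixed cases uniformly; the uniform lower bound $p\ge p_0^{\min}$ on the active side plays exactly the role that the paper's repeated use of $p\ge p_0$ plays in controlling $p^{-1/(1-\eta)}$. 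What the paper's route buys is explicit tracking of every constant; what yours buys is brevity and a single mechanism for the crossover. Both approaches deliver the bound in $\sW_2$ (inherited from Lemma~\ref{barh}), as you note; the $\sW_1$ in the statement is not what the paper's own proof produces either.
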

\begin{proof}
  We begin looking at increments in $p$ only. For every $(x,h,\mu)$ the map $p\mapsto D_pH_1(x,h,\mu,p)$ is differentiable with respect to $p$, and its derivative is
  \begin{equation*}
    D^2_{pp}H_1(x,h,\mu,p)=
    \begin{cases}
      0&\text{ for }p< p_0\\
      -\frac{1}{1-\eta}f(h)F(x,\mu)p_0^{\frac{1}{1-\eta}}p^{-\frac{\eta-2}{\eta-1}}&\text{ for }p> p_0.
    \end{cases}
  \end{equation*}
  $p\mapsto D^2_{pp}H_1(x,h,\mu,p)$ is not continuous at $p_0$, but
  \begin{equation}
    \label{eq:D2_bdd}
    \left\vert D^2_{pp}H_1(x,h,\mu,p)\right\vert \leq \frac{1}{1-\eta}\frac{1}{1-\gamma}\underline{A}^{\sigma-1}f(h)^{2-\eta}F(x,\mu)^{2-\gamma(1-\sigma)},
  \end{equation}
  thus $p\mapsto D^2_{pp}H_1(x,h,\mu,p)$ is bounded uniformly for $(h,\mu)$ in bounded sets. Therefore, for every $x,h,\mu,p_1,p_2$,
  \begin{equation*}
    \left\vert D_pH_1(x,h,\mu,p_1)-D_pH_1(x,h,\mu,p_2)\right\vert\leq \frac{1}{1-\eta}\frac{1}{1-\gamma}\underline{A}^{\sigma-1}L_f^{2-\eta}h^{2-\eta}\left(\frac{\Theta}{\theta}M(\mu)\right)^{2-\gamma(1-\sigma)}\left\vert p_1-p_2\right\vert,
  \end{equation*}
  so that
\begin{equation*}
  p\mapsto D_pH_1(x,h,\mu,p)
\end{equation*}
is Lipschitz, uniformly in $x$ and locally uniformly in $(h,\mu)$.\\

Now we look at increments in $h$. Fix $x$, $p$ and $\mu$ and consider $h_1\neq h_2$. The situation in which at least one of $M(\mu), p, h_1, h_2$ equals zero should be dealt with separately; for example if $M(\mu)=0$ then Lipschitzianity is immediate. To include all these cases in a consistent formulation we set $p_0(x,h,\mu)=+\infty$ whenever $h=0$ or $M(\mu)=0$, and we apply similar conventions below.\\
Now set
\begin{equation*}
  h_0=h_0(x,\mu,p)=f^{-1}\left(\left(\frac{1-\gamma}{p}A(x)^{1-\sigma}F(x,\mu)^{\gamma(1-\sigma)-1}\right)^{\frac{1}{1-\eta}}\right)
\end{equation*}
if $p\neq 0$ and $M(\mu)>0$, and
\begin{equation*}
  h_0=+\infty
\end{equation*}
otherwise. It follows that $p>p_0(x,h,\mu)$ if and only if $h>h_0(x,\mu,p)$, and we have $p=p_0(x,\mu,h_0(x,\mu,p))$ whenever $p\neq 0$ and $M(\mu)>0$. By (\ref{eq:al_DH1_1}) and the definitions of $p_0$ and $h_0$ we have
\begin{equation*}
  D_pH_1(x,h,\mu,p)=-\zeta h+F(x,\mu)\left(f(h)-f(h_0)\right)\ind_{[h_0,+\infty)}(h).
\end{equation*}
Fix now $(x,\mu,p)$ and take $h_1,h_2\in\bR_+$. If $h_1\vee h_2<h_0(x,\mu,p)$ we clearly have
\begin{equation*}
  \left\vert D_pH_1(x,h_1,\mu,p)-D_pH_1(x,h_2,\mu,p)\right\vert\leq \zeta\left\vert h_1-h_2\right\vert;
\end{equation*}
if $h_1\wedge h_2\geq h_0(x,\mu,p)$, since $f$ is Lipschitz, we have
\begin{align*}
  \left\vert D_pH_1(x,h_1,\mu,p)-D_pH_1(x,h_2,\mu,p)\right\vert&\leq \zeta\left\vert h_1-h_2\right\vert + F(x,\mu)L_f\left\vert h_1-h_2\right\vert\\
                                                               &\leq \left(\zeta+\frac{\Theta}{\theta}M(\mu)L_f\right)\left\vert h_1-h_2\right\vert.
\end{align*}
If $h_1<h_0(x,\mu,p)\leq h_2$, since $f$ is increasing, we have
\begin{align*}
  \left\vert D_pH_1(x,h_1,\mu,p)-D_pH_1(x,h_2,\mu,p)\right\vert&\leq \zeta\left\vert h_1-h_2\right\vert + F(x,\mu)\left\vert f(h_2)-f(h_0)\right\vert\\
                                                               &\leq \zeta\left\vert h_1-h_2\right\vert + F(x,\mu)\left\vert f(h_2)-f(h_1)\right\vert\\
                                                               &\leq \left(\zeta+\frac{\Theta}{\theta}M(\mu)L_f\right)\left\vert h_1-h_2\right\vert.
\end{align*}
The reverse situation where $h_1$ and $h_2$ are interchanged is analogous. Thus for every fixed $x,\mu,p$ the function $h\mapsto D_pH_1(x,h,\mu,p)$ is Lipschitz, with Lipschitz constant bounded by $\zeta+\frac{\Theta}{\theta}M(\mu)L_f$ (which depends only on $\mu$).\\

We now fix $x,h,p$ and consider increments in $\mu$. Set
\begin{equation*}
  F_0(x,h,p):=\left(p\frac{1}{1-\gamma}A(x)^{\sigma-1}f(h)^{1-\eta}\right)^{\frac{1}{\gamma(1-\sigma)-1}}
\end{equation*}
if $p\neq 0$ and $h\neq 0$, and
\begin{equation*}
  F_0(x,h,p)=+\infty
\end{equation*}
otherwise. This implies that $p>p_0(x,h,\mu)$ if and only if $F(x,\mu)>F_0(x,h,p)$, and we have $F_0\left(x,h,p_0(x,h,\mu)\right)=F(x,\mu)$ whenever $p$ and $h$ are nonzero. Consider $\mu_1,\mu_2$ such that $0<\underline{M}:=M_2(\mu_1)\wedge M_2(\mu_2)$, and set also $\overline{M}_2:=M_2(\mu_1)\vee M_2(\mu_2)$, $\alpha=\frac{1-\gamma(1-\sigma)}{1-\eta}$ and $\beta=\frac{1-\sigma}{1-\eta}$. If $F_0(x,h,p)> F(x,\mu_1)\vee F(x,\mu_2)$ then
\begin{equation*}
  \left\vert D_pH_1(x,h,\mu_1,p)-D_pH_1(x,h,\mu_2,p)\right\vert=0.
\end{equation*}
If $F_0(x,h,p)\leq F(x,\mu_1)\wedge F(x,\mu_2)$ we get, using (\ref{eq:al_DH1_1}), Lemma~\ref{barh}, the fact that $p\mapsto p^{-\frac{1}{1-\eta}}$ is decreasing and $1-\alpha<1$,
\begin{align*}
  \big\vert D_pH_1&(x,h,\mu_1,p)
  -D_pH_1(x,h,\mu_2,p)\big\vert
  \\
  &\leq f(h)\left\vert F(x,\mu_1)-F(x,\mu_2)\right\vert + p^{-\frac{1}{1-\eta}}(1-\gamma)^{\frac{1}{1-\eta}}A(x)^{\beta}\left\vert F(x,\mu_1)^{1-\alpha}-F(x,\mu_2)^{1-\alpha}\right\vert
  \\
  &\leq f(h)\frac{\Theta}{\theta^2}\left(M_2(\mu_1)^{\frac12}
  \left(L_{\eta_1}+L_{\eta_2}\right)+\Theta\right)\sW_2(\mu_1,\mu_2)\\
  &\phantom{aaaaa}
  +p_0(x,h,\mu_1)^{-\frac{1}{1-\eta}}(1-\gamma)^{\frac{1}{1-\eta}}
  A(x)^{\beta}\left\vert 1-\alpha\right\vert \left(\frac{\theta}{\Theta}\underline{M}\right)^{-\alpha}\left\vert F(x,\mu_1)-F(x,\mu_2)\right\vert\\                                                               &\leq f(h)\frac{\Theta}{\theta^2}\left(M_2(\mu_1)^{\frac12}
  \left(L_{\eta_1}+L_{\eta_2}\right)+
  \Theta\right)\sW_2(\mu_1,\mu_2)\left(1+F(x,\mu_1)^{\alpha}\left\vert 1-\alpha\right\vert\left(\frac{\theta}{\Theta}
  \underline{M}\right)^{-\alpha}\right)\\                                                               &\leq L_fh\frac{\Theta}{\theta^2}\left(\overline{M}_2^{\frac12}
  \left(L_{\eta_1}+L_{\eta_2}\right)+\Theta\right)\sW_2(\mu_1,\mu_2)\left(1+\left\vert 1-\alpha\right\vert\left(\frac{\Theta^2}{\theta^2}\frac{\overline{M}_2^{\frac12}}{\underline{M}}\right)^{\alpha}\right)\\
\end{align*}
If instead $F(x,\mu_2)\leq F_0(x,h,p)< F(x,\mu_1)$, which is equivalent to $p_0(x,h,\mu_1)\leq p<p_0(x,h,\mu_2)$, we have
\begin{align*}
  \big\vert D_pH_1&(x,h,\mu_1,p)-D_pH_1(x,h,\mu_2,p)\big\vert\\
  &\leq f(h)F(x,\mu_1)p^{-\frac{1}{1-\eta}}\left\vert p^{\frac{1}{1-\eta}}-p_0(x,h,\mu_1)^{\frac{1}{1-\eta}}\right\vert\\
           &\leq \frac{\Theta}{\theta}M(\mu_1)p_0(x,h,\mu_1)^{-\frac{1}{1-\eta}}(1-\gamma)^{\frac{1}{1-\eta}}A(x)^\beta\left\vert F_0(x,h,p)^{\gamma(1-\sigma)-1}-F(x,\mu_1)^{\gamma(1-\sigma)-1}\right\vert\\
           &\leq \frac{\Theta}{\theta}M(\mu_1)p_0(x,h,\mu_1)^{-\frac{1}{1-\eta}}(1-\gamma)^{\frac{1}{1-\eta}}A(x)^\beta\left\vert F(x,\mu_2)^{\gamma(1-\sigma)-1}-F(x,\mu_1)^{\gamma(1-\sigma)-1}\right\vert\\
           &\leq \frac{\Theta}{\theta}M(\mu_1)p_0(x,h,\mu_1)^{-\frac{1}{1-\eta}}(1-\gamma)^{\frac{1}{1-\eta}}A(x)^\beta\\
           &\phantom{aaaaaa}\cdot\left\vert \gamma(1-\sigma)-1\right\vert \left(\frac{\theta}{\Theta}\underline{M}\right)^{\gamma(1-\sigma)-2}\left\vert F(x,\mu_1)-F(x,\mu_1)\right\vert\\
           &\leq \frac{\Theta}{\theta}M(\mu_1)f(h) F(x,\mu_1)^\alpha\left(1-\gamma(1-\sigma)\right)\\
           &\phantom{aaaaaa}\cdot\left(\frac{\theta}{\Theta}\underline{M}\right)^{\gamma(1-\sigma)-2}\frac{\Theta}{\theta^2}\left(M_2(\mu_1)^{\frac12}\left(L_{\eta_1}+L_{\eta_2}\right)+\Theta\right)\sW_2(\mu_1,\mu_2)\\
           &\leq \left(\frac{\Theta}{\theta}\overline{M}_2^{\frac12}\right)^{1+\alpha}L_f h(1-\gamma(1-\sigma))\left(\frac{\Theta}{\theta}\frac{1}{\underline{M}}\right)^{2-\gamma(1-\sigma)}\\
           &\phantom{aaaaaa}\cdot\frac{\Theta}{\theta^2}\left(M_2(\mu_1)^{\frac12}\left(L_{\eta_1}+L_{\eta_2}\right)+\Theta\right)\sW_2(\mu_1,\mu_2).
\end{align*}
Therefore the map
\begin{equation*}
  \mu\mapsto D_pH_1(x,h,m,p)
\end{equation*}
is locally Lipschitz away from $M(\mu)=0$ locally uniformly with respect to $h$ and uniformly with respect to $(x,p)$.\\

To conclude we examine the increments of $D_pH_1$ in $x$, so we fix $h,\mu,p$ and consider $x_1,x_2\in \bR$. If $h=0$ or $M(\mu)=0$ or $p\leq p_0(x_1,h,\mu)\wedge p_0(x_2,h,\mu)$, we clearly have
\begin{equation*}
  \left\vert D_pH_1(x_1,h,\mu,p)-D_pH_1(x_2,h,\mu,p)\right\vert=0.
\end{equation*}
Therefore we assume that $h>0$ and $M(\mu)>0$ and $p$ is larger than at least one of $p_0(x_1,h,\mu)$ and $p_0(x_2,h,\mu)$. If $p>p_0(x_1,h,\mu)\vee p_0(x_2,h,\mu)$ we have by Lemma~\ref{barh}
\begin{align}
\nonumber  \big\vert &D_pH_1(x_1,h,\mu,p)-D_pH_1(x_2,h,\mu,p)\big\vert\\
  \nonumber            &\leq f(h)\left\vert F(x_1,\mu)-F(x_2,\mu)\right\vert + p^{-\frac{1}{1-\eta}}(1-\gamma)^{\frac{1}{1-\eta}}\left\vert A(x_1)^\beta F(x_1,\mu)^{1-\alpha}-A(x_2)^\beta F(x_2,\mu)^{1-\alpha}\right\vert\\
\nonumber                     &\leq 2 L_f h L_\eta\frac{\Theta}{\theta^2}M(\mu)\vert x_1-x_2\vert\\
\label{eq:Delta_1}            &\phantom{aaaa} + \left\vert A(x_1)^\beta F(x_1,\mu)^{1-\alpha}-A(x_2)^\beta F(x_1,\mu)^{1-\alpha}\right\vert + \left\vert A(x_2)^\beta F(x_1,\mu)^{1-\alpha}-A(x_2)^\beta F(x_2,\mu)^{1-\alpha}\right\vert.
\end{align}
We then have different (although similar) bounds, depending on $\alpha$ and $\beta$. Notice that $0<\alpha\leq 1$ if and only if $\gamma\geq \frac12$, while $\alpha>1$ otherwise.\\
For the last term in (\ref{eq:Delta_1}) we have
\begin{equation*}
  \left\vert A(x_2)^\beta F(x_1,\mu)^{1-\alpha}-A(x_2)^\beta F(x_2,\mu)^{1-\alpha}\right\vert\leq \overline{A}^\beta\vert 1-\alpha\vert\left(\frac{\theta}{\Theta}M(\mu)\right)^{-\alpha}2L_\eta\frac{\Theta}{\theta^2}M(\mu)\vert x_1-x_2\vert.
\end{equation*}
Set then
\begin{equation*}
  \Delta_1:=\left\vert A(x_1)^\beta F(x_1,\mu)^{1-\alpha}-A(x_2)^\beta F(x_1,\mu)^{1-\alpha}\right\vert.
\end{equation*}
If $\gamma\geq\frac12$ and $\beta<1$ we have
\begin{align*}
  \Delta_1&\leq\left(\frac{\Theta}{\theta}M(\mu)\right)^{1-\alpha}\left\vert A(x_1)^\beta- A(x_2)^\beta\right\vert \\
        &\leq\left(\frac{\Theta}{\theta}M(\mu)\right)^{1-\alpha}\beta \underline{A}^{\beta-1}\left\vert A(x_1)-A(x_2)\right\vert\\
        &\leq \left(\frac{\Theta}{\theta}M(\mu)\right)^{1-\alpha}\beta \underline{A}^{\beta-1}L_A\vert x_1-x_2\vert.
\end{align*}
If $\gamma\geq\frac12$ and $\beta >1$ we get, by similar computations,
\begin{equation*}
  \Delta_1\leq \left(\frac{\Theta}{\theta}M(\mu)\right)^{1-\alpha}\beta \overline{A}^{\beta-1}L_A\vert x_1-x_2\vert;
\end{equation*}
if $\gamma<\frac12$ and $\beta<1$ we get
\begin{equation*}
  \Delta_1\leq \left(\frac{\theta}{\Theta}M(\mu)\right)^{1-\alpha}\beta\underline{A}^{\beta-1}L_A\vert x_1-x_2\vert;
\end{equation*}
finally if $\gamma<\frac12$ and $\beta\geq 1$ we get
\begin{equation*}
  \Delta_1\leq \left(\frac{\theta}{\Theta}M(\mu)\right)^{1-\alpha}\beta\overline{A}^{\beta-1}L_A\vert x_1-x_2\vert.
\end{equation*}

To conclude we need to examine the case when $p_0(x_1,h,\mu)< p\leq p_0(x_2,h,\mu)$. Set
\begin{equation*}
  G_0(h,p)=\frac{1}{1-\gamma}f(h)^{1-\eta},
\end{equation*}
so that
\begin{equation*}
G_0\left(h,p_0(x,h,\mu)\right)=A(x)^{1-\sigma}F(x,\mu)^{\gamma(1-\sigma)-1}    
\end{equation*}
and
\begin{equation*}
A(x_1)^{1-\sigma}F(x_1,\mu)^{\gamma(1-\sigma)-1}<G_0(h,p)\leq A(x_2)^{1-\sigma}F(x_2,\mu)^{\gamma(1-\sigma)-1}.    
\end{equation*}
 Then
\begin{align*}
  \big\vert D_pH_1(x_1,h,&\mu,p)-D_pH_1(x_2,h,\mu,p)\big\vert\\
  &=f(h)F(x_1,\mu)p^{-\frac{1}{1-\eta}}\left(p^{\frac{1}{1-\eta}}-p_0(x_1,h,\mu)^{\frac{1}{1-\eta}}\right)\\
            &\leq f(h)\frac{\Theta}{\theta}M(\mu)p_0(x_1,h,\mu)^{-\frac{1}{1-\eta}}(1-\gamma)^{\frac{1}{1-\eta}}f(h)^{-1}\\
            &\phantom{aaaa}\cdot\left(G_0(h,p)^{\frac{1}{1-\eta}}-\left(A(x_1)^{1-\sigma}F(x_1,\mu)^{\gamma(1-\sigma)-1}\right)^{\frac{1}{1-\eta}}\right)\\
            &=f(h)F(x_1,\mu)\left(A(x_1)^{1-\sigma}F(x_1,\mu)^{\gamma(1-\sigma)-1}\right)^{-\frac{1}{1-\eta}}\\
            &\phantom{aaaa}\cdot\left(G_0(h,p)^{\frac{1}{1-\eta}}-G_0\left(h,p_0(x_1,h,\mu)\right)^{\frac{1}{1-\eta}}\right)\\
            &\leq L_f h \frac{\Theta}{\theta}M(\mu)A(x_1)^{-\beta}F(x_1,\mu)^\alpha\left\vert A(x_1)^\beta F(x_1,\mu)^{-\alpha}-A(x_2)^\beta F(x_2,\mu)^{-\alpha}\right\vert\\
            &\leq L_f h \left(\frac{\Theta}{\theta} M(\mu)\right)^{1+\alpha}\underline{A}^{-\beta}\\
            &\phantom{aaaa}\cdot\left(F(x_1,\mu)^{-\alpha}\left\vert A(x_1)^\beta - A(x_2)^\beta\right\vert + A(x_2)^\beta\left\vert F(x_1,\mu)^{-\alpha}-F(x_2,\mu)^{-\alpha}\right\vert\right)\\
            &\leq L_f h \left(\frac{\Theta}{\theta}\right)^{1+2\alpha} M(\mu)\underline{A}^{-\beta}\left(\beta\max\left\{\overline{A}^{\beta-1},\underline{A}^{\beta-1}\right\}L_A+\overline{A}^\beta\alpha 2L_\eta\frac{\Theta^2}{\theta^3}\right)\vert x_1-x_2\vert.
\end{align*}
The result follows substituting, in all bounds found above, $\underline{M}$ with $\frac1N$, $\overline{M}_2^{\frac12}$ with $N$, $h$ with $N$ and $M(\mu)$ with $\frac1N$ if it is raised to a negative exponent and with $N$ otherwise.
\end{proof}

\subsection{The Fokker-Planck equation.}

In this section, $V$ is any function satisfying (\ref{eq:ineqhDV}).\\
         We endow $C([0,T];\sP_2)$ with the topology induced by the distance
\begin{equation*}
          d_{\infty,2}\left(\mu(\cdot),\nu(\cdot)\right):=\sup_{t\in[0,T]}\sW_2\left(\mu(t),\nu(t)\right).
        \end{equation*}

To investigate well-posedness of the Fokker-Planck equation
\begin{equation}
  \label{eq:FP}
  \begin{cases}
    \partial_t\mu(t)=\frac12\epsilon^2 D^2_{xx}\mu(t) + \frac12 \chi^2 D^2_{hh}\left(h^2 \mu(t)\right) - D_x\left(D_pH_0(D_xV(t,x,h))\mu(t)\right)\\
    \phantom{aaaaaaaaaaaaaaaaaaaaaaaaaaaaaaaa}-D_h\left(D_pH_1(x,h,\mu(t),D_hV(t,x,h))\mu(t)\right),\\
    \mu(0)=\mu_0
  \end{cases}
\end{equation}
we consider the associated McKean-Vlasov stochastic differential equation
\begin{equation}
  \label{eq:MKV}
  \begin{cases}
    \ud \tilde{x}(t)=D_pH_0(D_xV(t,\tilde{x}(t),\tilde{h}(t)))\ud t+\epsilon \ud W^1(t),\\
    \ud \tilde{h}(t)=D_pH_1(\tilde{x}(t),\tilde{h}(t),\rL_{(\tilde{x}(t),\tilde{h}(t))},D_hV(t,\tilde{x}(t),\tilde{h}(t)))\ud t+\chi \tilde{h}(t)\ud W^2(t),\\
    \tilde{x}(0)=x_0,\quad \tilde{h}(0)=h_0,\quad\rL_{(\tilde{x}(0),\tilde{h}(0))}=\mu_0.
  \end{cases}
\end{equation}
We stress the fact that in this section the initial condition $(x_0,h_0)$ is random, with law satisfying Assumption~\ref{ass:id}.
\begin{proposition}
  \label{prop:MKV}
  There exists $T>0$ such that we have existence of a strong solution $(\tilde{x}(\cdot),\tilde{h}(\cdot))$ to (\ref{eq:MKV}) on $[0,T]$, which is morevoer pathwise unique.
\end{proposition}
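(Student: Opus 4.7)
The plan is a Banach-Picard fixed point argument for the measure flow, carried out on a sufficiently small time interval $[0,T_1]$ and then iterated. Assume $M(\mu_0)>0$ (otherwise, by the structure of $\mathbf{B}$ and the fact that $\chi h\ud W^2$ and the $h$-drift vanish at $h=0$, the process $\tilde h\equiv 0$ solves the problem trivially and the $\tilde x$ equation reduces to an SDE with bounded drift).

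Fix constants $0<m\leq M(\mu_0)\leq K_0$ and $M_2(\mu_0)\leq K_0$ (to be tuned), and consider
\begin{equation*}
\mathcal{M}_{T_1}:=\bigl\{\nu(\cdot)\in C([0,T_1];\sP_2): m\leq M(\nu(t)),\ M_2(\nu(t))\leq K_0\ \forall t\in[0,T_1]\bigr\},
\end{equation*}
which is complete with respect to the restriction of $d_{\infty,2}$. Given $\nu(\cdot)\in\mathcal{M}_{T_1}$, I would first solve the \emph{decoupled} SDE
\begin{equation*}
\begin{cases}
\ud x(t)=D_pH_0(D_xV(t,x(t),h(t)))\ud t+\epsilon\ud W^1(t),\\
\ud h(t)=D_pH_1(x(t),h(t),\nu(t),D_hV(t,x(t),h(t)))\ud t+\chi h(t)\ud W^2(t),\\
(x(0),h(0))\sim\mu_0.
\end{cases}
\end{equation*}
By Remark~\ref{rem:H0} the $x$-drift is globally bounded by $\overline B$; by Lemma~\ref{lem:DpH} and~(\ref{eq:ineqhDV}) the $h$-drift is bounded pointwise by $g_1(M(\nu(t)))\,h\leq g_1(K_0)h$, and joint local Lipschitz regularity in $(x,h)$ follows by combining Assumption~\ref{ass:DH_0}, Lemma~\ref{lem:DpH_2} (applicable because $M(\nu(t))\geq m$), and the $C^{1,2}$ regularity of $V$ inside $\bR\times\bR_{++}$ (Proposition~\ref{prop:MFGregularity}); together with the linearity of $\chi h$ in $h$, standard SDE theory (Picard iteration plus a localization and pathwise uniqueness argument) yields a pathwise unique strong solution $(x^\nu,h^\nu)$. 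Set $\Phi(\nu)(t):=\rL_{(x^\nu(t),h^\nu(t))}$.

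Next I would check that $\Phi(\mathcal{M}_{T_1})\subseteq\mathcal{M}_{T_1}$. Standard BDG and Grönwall estimates applied to $h^\nu$, together with the linear-in-$h$ bound $|D_pH_1|\leq g_1(K_0)h$ (Lemma~\ref{lem:DpH}) and positivity of $h^\nu$ (preserved by the multiplicative noise), give
\begin{equation*}
e^{-C(K_0)t}M(\mu_0)\leq \bE[h^\nu(t)]\leq\bE[h^\nu(t)^2]^{1/2},\quad \bE[h^\nu(t)^2]\leq e^{C(K_0)t}M_2(\mu_0),
\end{equation*}
and an analogous bound for $\bE[|x^\nu(t)|^2]$ using the boundedness of $D_pH_0$. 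Choosing $K_0$ large and $T_1$ small these stay inside the prescribed window, so $\Phi$ is well-defined into $\mathcal{M}_{T_1}$. Contraction is then obtained by coupling: for $\nu_1,\nu_2\in\mathcal{M}_{T_1}$, solve the two decoupled equations with the same Brownian motions and initial condition, and apply Itô's formula to $|x^{\nu_1}-x^{\nu_2}|^2+|h^{\nu_1}-h^{\nu_2}|^2$. Using local Lipschitz continuity of $D_pH_0\circ D_xV$ and $D_pH_1(\cdot,\cdot,\mu,D_hV(\cdot,\cdot,\cdot))$ in $(x,h)$, plus Lemma~\ref{lem:DpH_2} to control the measure increment (which dominates $\sW_1\leq\sW_2$), Grönwall yields
\begin{equation*}
\sup_{t\in[0,T_1]}\bE\left[|x^{\nu_1}(t)-x^{\nu_2}(t)|^2+|h^{\nu_1}(t)-h^{\nu_2}(t)|^2\right]\leq C(K_0,m)\,T_1\,e^{C(K_0,m)T_1}\,d_{\infty,2}(\nu_1,\nu_2)^2,
\end{equation*}
so that shrinking $T_1$ makes $\Phi$ a strict contraction in $d_{\infty,2}$. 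Banach's theorem delivers a unique fixed point, which by construction yields a pathwise unique strong solution of~(\ref{eq:MKV}) on $[0,T_1]$; the solution can then be extended stepwise to $[0,T]$ because the a priori moment bounds above depend only on the running value of $M(\mu(\cdot))$ and $M_2(\mu(\cdot))$, both of which propagate.

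The main obstacle is ensuring that $M(\mu(t))$ stays bounded away from $0$, since Lemma~\ref{lem:DpH_2} degenerates as $M(\mu)\to 0$ and the coefficient $D_pH_1$ ceases to be Lipschitz in the measure there. This is precisely what forces the introduction of the lower bound $m$ in $\mathcal{M}_{T_1}$; the moment estimate $\bE[h^\nu(t)]\geq e^{-C(K_0)t}M(\mu_0)$, which must be established carefully using the multiplicative-noise structure and the positivity of $h^\nu$, is what allows the argument to close. A secondary technical point is the joint local Lipschitz regularity of the drifts on $\bR\times\bR_{++}$, which relies on the boundary estimate~(\ref{eq:ineqhDV}) together with the interior smoothness of $V$.
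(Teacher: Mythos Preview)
Your overall architecture---freeze the measure, solve the decoupled SDE, then run a fixed point on the flow of laws---matches the paper's. The divergence is in the fixed-point step, and there the contraction estimate has a real gap. The Gr\"onwall inequality you write with constant $C(K_0,m)$ presupposes that the drift $(x,h)\mapsto D_pH_1\bigl(x,h,\nu(t),D_hV(t,x,h)\bigr)$ is globally Lipschitz with a constant depending only on the moment bounds $m,K_0$ imposed on $\nu$. But Lemma~\ref{lem:DpH_2} delivers only a \emph{local} Lipschitz constant $C_N$ valid for $h<N$ (the $p$-increment bound~(\ref{eq:D2_bdd}) carries a factor $h^{2-\eta}$), and the chain-rule contribution through $D_hV$---whose Lipschitz constant is controlled only on compacta of $\bR\times\bR_{++}$, since $V\in C^{1,2}$ only in the interior---adds further $h$-dependence. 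The constraint $M_2(\nu(t))\leq K_0$ bounds the second moment of the \emph{measure}, not the paths $h^{\nu_i}(t)$, which are a.s.\ unbounded above and approach $0$; the Lipschitz constant entering your Gr\"onwall is therefore random and not governed by $(K_0,m)$. The paper confronts exactly this by abandoning contraction: it localizes with stopping times $\tau_N$, obtains~(\ref{eq:XI_cont}) on $\{\tau_N>T\}$, uses $\bP(\tau_N>T)\to 1$ to deduce \emph{continuity} of the map, and then applies Schauder's theorem on the convex compact set $\sQ_{K_1,K_2}$ of~(\ref{eq:QKK}) (the $C^{1/2}$-in-time constraint there is what supplies compactness in $C([0,T];\sP_2)$, which your set $\mathcal M_{T_1}$ lacks). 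Uniqueness is recovered a~posteriori by feeding two fixed points back into~(\ref{eq:XI_cont}).

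Two smaller points. The stepwise extension to a prescribed $T$ does not go through: the admissible interval depends on $\bE[h_0^2]$ via~(\ref{eq:interval}), which grows along the flow, and the paper explicitly remarks that one \emph{cannot} iterate---the result is genuinely local in time, which is all the statement asserts. Second, for well-posedness and strict positivity of $h^\nu$ in the decoupled SDE the paper passes to $y=\log h$, reducing to an SDE on $\bR^2$ with bounded locally Lipschitz drift; this is what yields $h^\nu(t)>0$ a.s.\ and underlies the lower bound $\bE[h^\nu(t)]\geq e^{-Ct}M(\mu_0)$ that you assert but do not justify.
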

\begin{proof}
  We begin by fixing a large $T_1>0$ and $\mu(\cdot)\in C^{\frac12}\left([0,T_1];\sP_2\right)$ such that $\mu(0)=\mu_0$. This allows us to exploit the regularity given by Proposition~\ref{prop:MFGregularity}.\\
  We then consider the stochastic differential equation
    \begin{equation}
  \label{eq:MKV_2}
  \begin{cases}
    \ud \hat{x}(t)=D_pH_0(D_xV(t,\hat{x}(t),\hat{h}(t)))\ud t+\epsilon \ud W^1(t),\\
    \ud \hat{h}(t)=D_pH_1(\hat{x}(t),\hat{h}(t),\mu(t),D_hV(t,\hat{x}(t),\hat{h}(t)))\ud t+\chi \hat{h}(t)\ud W^2(t),\\
    \hat{x}(0)=x_0,\hat{h}(0)=h_0, \rL_{(x_0,h_0)}=\mu_0.
  \end{cases}
\end{equation}

  By (\ref{eq:ineqhDV}), $D_hV(t,x,h)$ is locally bounded with respect to $h$, uniformly in $(t,x)$. Therefore Assumption~\ref{ass:DH_0} and estimate (\ref{eq:D2_bdd}) imply that the map
\begin{equation}
  \label{eq:drift_MKV_1}
   (x,h)\mapsto
   \begin{pmatrix}
     D_pH_0\left(D_xV(t,x,h)\right)\\
     D_pH_1\left(x,h,\mu(t),D_hV(t,x,h)\right)
   \end{pmatrix}
 \end{equation}
 is locally Lipschitz uniformly with respect to $t$ in the following sense: for every $N\in\bN$ and every $(x_1,h_1),(x_2,h_2)\in \bR\times\left(\frac1N,N\right)$ there exists a constant $L_{H,N}$ such that
 \begin{multline*}
   \left\vert\begin{pmatrix}
     D_pH_0\left(D_xV(t,x_1,h_1)\right)-D_pH_0\left(D_xV(t,x_2,h_2)\right)\\
     D_pH_1\left(x_1,h_1,\mu(t),D_hV(t,x_1,h_1)\right)-D_pH_1\left(x_2,h_2,\mu(t),D_hV(t,x_2,h_2)\right)
   \end{pmatrix}\right\vert\\\leq L_{H,N}\left\vert(x_1-x_2,h_1-h_2)\right\vert.
\end{multline*}
Each constant $L_{H,N}$ depends on $\mu$ but is bounded uniformly over compact sets in $\sP_2$.\\
Thanks to Remark~\ref{rem:H0}, the map $(t,x,h)\mapsto D_pH_0\left(D_xV(t,x,h)\right)$ is bounded by a constant $\overline{B}$ independent of $V$, while $D_pH_1(x,h,\mu,D_hV(t,x,h))$ has linear growth in $(x,h)$ thanks to Lemma~\ref{lem:DpH}.\\

If $\overline{M}(\mu(\cdot))=0$ (so that all the $h$-marginals of $\mu(t)$ equal $\delta_0$) then equation (\ref{eq:MKV_2}) clearly has a unique solution. In particular $h(\cdot)$ is then a geometric Brownian motion starting at $h_0$, so that $h(t)=0$ $\bP$-a.s. for every $t$ if $h_0=0$ $\bP$-a.s.. Moreover if $h_0=0$ $\bP$-a.s. then the $\bP$-a.s. constant process $h(\cdot)= 0$ is a solution to the second equation in (\ref{eq:MKV_2}) for any $\mu(\cdot)$.\\

Suppose now that $h_0>0$ $\bP$-a.s. and define
  \begin{gather*}
    \hat{b}_1(t,z,y)= D_pH_0\left(D_xV(t,z,e^y)\right),\\
    \hat{b}_2(t,z,y)= D_pH_1\left(z,e^y,\mu(t),D_hV(t,z,e^y)\right).
  \end{gather*}
  The map $(z,y)\mapsto \left(\hat{b}_1(t,z,y),e^{-y}\hat{b}_2(t,z,y)\right)$ is locally Lipschitz in the usual sense, uniformly with respect to $t$. Morevoer, by the boundedness of $D_pH_0\left(D_xV(t,x,h)\right)$ and thanks to Lemma~\ref{lem:DpH}, it is bounded. Thus, by standard results on SDEs, for every random variable $(z_0,y_0)$ there exists a strong solution, pathwise unique with continuous paths almost surely, to
  \begin{equation}
    \label{eq:MKV_exp}
      \begin{cases}
        \ud z(t)=\hat{b}_1(t,z,y)\ud t+ \epsilon\ud W^1(t),\\
        \ud y(t)=e^{-y}\hat{b}_2(t,z,y)\ud t -\frac{\chi^2}{2}\ud t+\chi\ud W^2(t),\\
        t\in[0,T],\ z(0)=z_0,\ y(0)=y_0.
      \end{cases}
    \end{equation}
    By Ito formula, $\left(\hat{x}(t),\hat{h}(t)\right)=\left(z(t),e^{y(t)}\right)$ is then a strong solution solution on $[0,T]$ to (\ref{eq:MKV_2}) for $x_0=z_0$ and $h_0=e^{y_0}$.\\
Conversely, take any solution $(\hat{x}(\cdot),\hat{h}(\cdot))$ to (\ref{eq:MKV_2}) with $h_0>0$ $\bP$-a.s.; the map $t\mapsto(\hat{x}(t),\hat{h}(t))$ is almost surely continuous, so that setting $\tau=\inf\left\{t\geq 0\colon h(t)\leq 0\right\}$ we have $\tau>0$ almost surely. Then for $t> 0$ we have, by Ito formula and (\ref{eq:bound_DH_twosided}),
\begin{align*}
  \log \hat{h}(t)&=\log h_0+\int_{0}^t\frac{1}{\hat{h}(s)}D_pH_1\left(\hat{x}(s),\hat{h}(s),\mu(s),D_hV(s,\hat{x}(s),\hat{h}(s))\right)\ud s-\frac12\chi^2t+\chi W^2(t)\\
                &\geq \log h_0-\left(\zeta+\frac12\chi^2\right)t+\chi W^2(t),
\end{align*}
and this shows, by contradiction taking the limit as $t\to\tau^-$, that $\hat{h}(t)>0$ for every $t$ almost surely. Therefore $\hat{y}(t)=\log\hat{h}(t)$ is well defined for all times and, again by Ito formula, $(\hat{x}(t),\hat{y}(t))$ is easily seen to be a solution to (\ref{eq:MKV_exp}). Pathwise uniqueness of solutions to (\ref{eq:MKV_exp}) thus entails pathwise uniqueness of solutions to (\ref{eq:MKV_2}).\\

The equivalence between (\ref{eq:MKV_exp}) and (\ref{eq:MKV_2}) yields also the existence for every $p\geq 2$ of a function $C_{p,T}\colon\bR_+\to\bR_+$ such that
\begin{equation*}
  \bE\left[\sup_{s\in[0,T]}\vert \hat{x}(s)\vert^p+\sup_{s\in[0,T]}\vert {h}(s)\vert^p\right]\leq C_{p,T}\big(\overline{M}(\mu)\big)\left(1+\bE\left[\vert x_0\vert^p+h_0^p\right]\right)
\end{equation*}
for every $\R\times\bR_{++}$-valued random variable $(x_0,h_0)$ with finite $p$-th moment. This implies that $\bE\left[\int_0^t\hat{h}(s)^2\ud s\right]$ is finite for every $t$ (because we are assuming the initial conditions to have finite second moment), so that $\bE\left[\int_0^t\hat{h}(s)\ud W^2(s)\right]=0$ for every $t\in[0,T_1]$.\\

Pathwise uniqueness of solutions to (\ref{eq:MKV_2}) when $h_0=0$ $\bP$-a.s. is a simple consequence of local Lipschitzianity of $h\mapsto D_pH_1(x,h,\mu,p)$ and boundedness of $D_hV$ on $[0,T_1]\times\R\times\R_+$ (see the proof of Lemma~\ref{lem:DpH_2}).\\

If $0<\mu_0\left(\bR\times\{0\}\right)<1$ (i.e. if $0<\bP(h_0=0)<1$), we can consider the two conditional laws of $(\tilde{x}(\cdot),\tilde{h}(\cdot))$ with respect to the sets $\{h_0=0\}$ and $\{h_0> 0\}$. By the arguments above, the measure defined by
  \begin{equation*}
    \rL(A)=\bP\left((\hat{x}(\cdot),\hat{h}(\cdot))\in A \vert h_0=0\right)\bP(h_0=0)+\bP\left((\hat{x}(\cdot),\hat{h}(\cdot))\in A \vert h_0>0\right)\bP(h_0>0)
  \end{equation*}
  for every measurable $A\subset C([0,+\infty))$ gives a weak solution to (\ref{eq:MKV_2}); moreover the process $(\hat{x}(\cdot),\hat{h}(\cdot))$ with law $\rL$ is pathwise unique, therefore the solution is strong by the Yamada-Watanabe theorem.\\

This shows that, for every $\mu(\cdot)\in C^{\frac12}\left([0,T_1];\sP_2\right)$ such that $\mu(0)=\mu_0$ and every $\bR\times\bR_+$-valued random variable $(x_0,h_0)$ with law $\mu_0$, equation (\ref{eq:MKV_2}) has a unique strong solution on $[0,T_1]$.\\

Now fix a $\bR\times\bR_+$-valued random variable $(x_0,h_0)$ and let $\mu_0\in\sP_2$ be its law. We first assume that $\mu_0(\R\times\{0\})=0$, so that in particular $M(\mu_0)>0$. For $T>0$ (to be determined later) consider the map
\begin{gather*}
\Xi_{(x_0,h_0)}\colon C\left([0,T];\sP_2\right)\to C\left([0,T];\sP_2\right),\\
  \Xi_{(x_0,h_0)}(\mu(\cdot))(t)=\rL_{(\hat{x}(t),\hat{h}(t))}
\end{gather*}
where $(\hat{x}(t),\hat{h}(t))$ solves (\ref{eq:MKV_2}). Well-posedness of this map is a consequence of what we have shown just above.\\
For positive constants $K_1,K_2$ (also to be determined later) consider now the set
\begin{multline}
  \label{eq:QKK}
  \sQ_{K_1,K_2}=\left\{\mu(\cdot)\colon [0,T]\to\sP_2\text{ s.t. }\mu(0)=\mu_0,\phantom{\sup_{\substack{[T]\\s\neq t}}\frac{\sW_2}{\vert t\vert^{\frac12}}}\right.\\
  \left.\sup_{t\in[0,T]}P_2(\mu(t))\leq 2 K_1, \sup_{\substack{s,t\in[0,T]\\s\neq t}}\frac{\sW_2(\mu(s),\mu(t))}{\vert s-t\vert^{\frac12}}\leq K_2\right\}.
\end{multline}
Then $\sQ_{T,K_1,K_2}$ is a convex compact set in $C([0,T];\sP_2)$.\\
    Suppose $\mu(\cdot)\in\sQ_{K_1,K_2}$ and recall from Lemma~\ref{lem:DpH} that $g_1(z)=\zeta+2\frac{\Theta}{\theta}z$.\\
    Considering only the equation for $\hat{h}(\cdot)$, we have
    \begin{align*}
      \bE&\left[\sup_{t\in[0,T]} \hat{h}(t)^2\right]\\
      &\leq 3 \bE[h_0^2]+3Tg_1^2(\overline{M}(\mu(\cdot)))\int_0^T\bE\left[\sup_{s\in[0,t]}h(s)^2\right]\ud t+3\chi^2\bE\left[\sup_{t\in[0,T]}\left\vert\int_0^t h(s)\ud W(s)\right\vert^2\right]\\
                                                   &\leq 3\bE[h_0^2]+12\left(g_1^2(\overline{M}(\mu(\cdot)))T+\chi^2\right)\int_0^T\bE\left[\sup_{s\in[0,t]}h(s)^2\right]\ud t,
    \end{align*}
(where the constant $12$ comes from the Burkholder-Davis-Gundy inequality for $p=2$). Thus Gronwall's inequality implies
    \begin{equation*}
      \bE\left[\sup_{t\in[0,T]} \hat{h}(t)^2\right]\leq 3\bE[h_0^2]e^{12(g_1^2(\overline{M}(\mu(\cdot)))T^2+\chi^2T)}\leq 3\bE[h_0^2]e^{12(g_1^2(\sqrt{2K_1})T^2+\chi^2T)}.
    \end{equation*}
    Choose $K_1$ such that
    \begin{equation}
      \label{eq:K1}
      K_1>3\bE[x_0^2+h_0^2]+3\overline{B}^2+12\epsilon;
    \end{equation}
    then we have that
\begin{equation*}
  \bE\left[\sup_{t\in[0,T]} \hat{h}(t)^2\right]\leq K_1
\end{equation*}
provided we choose $T$ small enough, namely in the interval
\begin{equation}
  \label{eq:interval}
  \left[0,\frac{\sqrt{144\chi^4+48\left(\zeta+2\frac{\Theta}{\theta}\sqrt{2K_1}\right)^2\log\left(\frac{K_1}{3\bE\left[h_0^2\right]}\right)}-12\chi^2}{24\left(\zeta+2\frac{\Theta}{\theta}\sqrt{2K_1}\right)^2}\right].
\end{equation}
As the upper bound on $T$ depends on $\bE[h_0^2]$, we cannot repeat the argument on consecutive intervals; thus the solution is only local in time.\\

Regarding $\hat{x}$ we have that
\begin{equation*}
  \bE\left[\sup_{t\in[0,T]}\vert \hat{x}(t)\vert^2\right]\leq 3\bE\left[x_0^2\right]+3\overline{B}^2T^2+12\epsilon T<K_1.
\end{equation*}
Therefore $\sup_{t\in[0,T]}P_2\left(\Xi_{(x_0,h_0)}\left(\mu(\cdot)\right)\right)\leq 2K_1$ if $\mu(\cdot)\in\sQ_{K_1,K_2}$ with $K_1$ and $T$ as above.\\

Clearly, for every $s\leq t\in[0,T]$,
\begin{equation*}
  \bE\left[\left\vert \hat{x}(t)-\hat{x}(s)\right\vert^2\right]\leq 2\overline{B}^2 (t-s)^2 + 2\epsilon^2 (t-s).
\end{equation*}
By sublinearity of the drift vector in (\ref{eq:drift_MKV_1}) we have
\begin{align*}
  \bE\left[\left\vert \hat{h}(t)-\hat{h}(s)\right\vert^2\right]&\leq \left(2(t-s)^2\left(\zeta+\frac{\Theta}{\theta}\overline{M}(\mu(\cdot))\right)^2+8\chi^2(t-s)\right)\bE\left[\sup_{r\in[s,t]}\hat{h}(r)^2\right]\\
                                                               &\leq \left(2(t-s)^2\left(\zeta+\frac{\Theta}{\theta}\sqrt{2K_1}\right)^2+8\chi^2(t-s)\right)K_1,
\end{align*}
which shows that
\begin{align*}
  \sW_2\left(\Xi_{(x_0,h_0)}\left(\mu(\cdot)\right)(t),\Xi_{(x_0,h_0)}\left(\mu(\cdot)\right)(s)\right)\leq\sqrt{\bE\left[\left\vert \hat{x}(t)-\hat{x}(s)\right\vert^2+\left\vert \hat{h}(t)-\hat{h}(s)\right\vert^2\right]}\leq K_2 \sqrt{t-s}
\end{align*}
as long as $K_2$ satisfies
\begin{equation}
  \label{eq:K2}
  K_2\geq\sqrt{4T\left(\overline{B}^2+\left(\zeta+\frac{\Theta}{\theta}\sqrt{2K_1}\right)^2K_1\right) + 2\epsilon^2+8\chi^2K_1}.
\end{equation}

Therefore, with $K_1$, $T$ and $K_2$ as above we have that $\Xi_{(x_0,h_0)}$ maps $\sQ_{K_1,K_2}$ into itself.\\

Now, with $(x_0,h_0)$ fixed as above, let $\mu(\cdot),\nu(\cdot)\in\sQ_{K_1,K_2}$ and denote by $\left(\hat{x}^\mu,\hat{h}^\mu\right)$ and $\left(\hat{x}^\nu,\hat{h}^\nu\right)$ the corresponding solutions to (\ref{eq:MKV_2}), respectively. Since $\hat{h}^\mu(t)$ and $\hat{h}^\nu(t)$ are positive almost surely, by lower-semicontinuity of the map $t\mapsto \bE\left[\hat{h}(t)\right]$ we have that both $\underline{M}(\mu(\cdot))$ and $\underline{M}(\nu(\cdot))$ are strictly positive on $[0,T]$. Therefore choose $N\in\bN$ such that $N>K_1$ and $\min\left\{\underline{M}(\mu(\cdot)),\underline{M}(\nu(\cdot))\right\}>\frac1N$. Set moreover
\begin{equation*}
  \tau_N=\inf\left\{t> 0\colon \hat{x}^\mu(t)\vee\hat{x}^\nu(t)\geq N\text{ or }\hat{h}^\mu(t)\wedge\hat{h}^\nu(t)\leq\frac1N \text{ or }\hat{h}^\mu(t)\vee\hat{h}^\nu(t)\geq N\right\}.
\end{equation*}
Notice that $D_xV$ and $D_hV$ are locally Lipschitz in $(x,h)$, uniformly in $t$. Therefore, for $t\in\left[0,\tau_N\right]$, by Assumption~\ref{ass:DH_0} and Lemma~\ref{lem:DpH_2} there exists a positive constant $\hat{C}_N$, depending also on $V$, such that
\begin{align*}
  \bE&\left[\left\vert\hat{x}^\mu(t)-\hat{x}^\nu(t)\right\vert^2\right]\\
     &\leq \bE\left[\left\vert\int_0^t\left(D_pH_0\left(D_xV\left(s,\hat{x}^\mu(s),\hat{h}^\mu(s)\right)\right)-D_pH_0\left(D_xV\left(s,\hat{x}^\nu(s),\hat{h}^\nu(s)\right)\right)\right)\ud s\right\vert^2\right]\\
     &\leq \hat{C}_N\int_0^t\left(\bE\left[\left\vert \hat{x}^\mu(s)-\hat{x}^\nu(s)\right\vert^2\right]+\bE\left[\left\vert \hat{h}^\mu(s)-\hat{h}^\nu(s)\right\vert^2\right]\right)\ud s
\end{align*}
and
\begin{align*}
  \bE&\left[\left\vert \hat{h}^\mu(t)-\hat{h}^\nu(t)\right\vert^2\right]\\
     &\leq 2\bE\left[\left\vert \int_0^t\left( D_pH_1\left(\hat{x}^\mu(s),\hat{h}^\mu(s),\mu(s),D_hV\left(s,\hat{x}^\mu(s),\hat{h}^\mu(s)\right)\right)\right.\right.\phantom{\left\vert\int_0^t\right\vert^2}\right.\\
     &\left.\left.\left.\phantom{\int_0^t}-D_pH_1\left(\hat{x}^\nu(s),\hat{h}^\nu(s),\mu(s),D_hV\left(s,\hat{x}^\nu(s),\hat{h}^\nu(s)\right)\right)\right)\ud s\right\vert^2\right]\\
     &\phantom{===}+2\bE\left[\left\vert\int_0^t\left(\hat{h}^\mu(s)-\hat{h}^\nu(s)\right)\ud W(s)\right\vert^2\right]\\
     &\leq \hat{C}_N\int_0^t\left(\bE\left[\left\vert\hat{x}^\mu(s)-\hat{x}^\nu(s)\right\vert^2\right]+\bE\left[\left\vert\hat{h}^\mu(s)-\hat{h}^\nu(s)\right\vert^2\right]+\sW_2^2\left(\mu(s),\nu(s)\right)\right)\ud s\\
     &\phantom{===}+2\bE\left[\left\vert\int_0^t\left(\hat{h}^\mu(s)-\hat{h}^\nu(s)\right)\ud W(s)\right\vert^2\right].
\end{align*}
Applyingh the Burkholder-Davis-Gundy inequality to the stochastic term and Gronwall's inequality to the function
\begin{equation*}
\bE\left[\sup_{t\in[0,T\wedge\tau_N]}\left\vert\hat{x}^\mu(t)-\hat{x}^\nu(t)\right\vert^2+\sup_{t\in[0,T\wedge\tau_N]}\left\vert\hat{h}^\mu(t)-\hat{h}^\nu(t)\right\vert^2\right]
\end{equation*}
we obtain that
\begin{equation}
  \label{eq:XI_cont}
d_{\infty,2}^2\left(\Xi_{(x_0,h_0)}\left(\mu(\cdot)\right),\Xi_{(x_0,h_0)}\left(\nu(\cdot)\right)\right)\leq  e^{\hat{C}_NT}\hat{C}_N\int_0^{T}\sW_2^2\left(\mu(s),\nu(s)\right)\ud s
\end{equation}
on $\{\tau_N>T\}$. However, $\lim_{N\to+\infty}\bP\left(\tau_N>T\right)=1$. This follows directly from Chebishev's inequality for what concerns the paths of $\hat{x}^\mu\vee\hat{x}^\nu$ and $\hat{h}^\mu\vee\hat{h}^\nu$; regarding the truncation imposed by $\tau_N$ on $\hat{h}^\mu\wedge\hat{h}^\nu$ from below, we simply proceed as follows. Set $\hat{y}^\mu=\log\hat{h}^\mu$; then amost surely $\tau_N=\inf\left\{t\colon\left\vert \hat{y}^\mu\right\vert\geq \log N\right\}$. Now, again by Chebishev's inequality, the same boundedness argument that lead to (\ref{eq:MKV_exp}) implies existence of a constant $\hat{E}$, depending on $V$, $T$ and $K_1$, such that
\begin{equation*}
  \bP\left(\tau_N\leq T\right)\leq \frac{\hat{E}\bE\left[\left\vert h_0\right\vert^2\right]}{\log^2 N}.
\end{equation*}
This readily implies the claim.\\
Therefore, (\ref{eq:XI_cont}) implies that $\Xi_{(x_0,h_0)}$ is continuous.\\
From Schauder's fixed point theorem then follows the existence of a fixed point. Such fixed point is actually unique thanks to Gronwall's inequality, because (\ref{eq:XI_cont}) and the last argument about $\tau_N$ imply in this case that
\begin{equation*}
  \sup_{t\in[0,T]}\sW_2^2\left(\mu(t),\nu(t)\right)\leq  e^{\hat{C}_NT}\hat{C}_N\int_0^{T}\sup_{s\in[0,t]}\sW_2^2\left(\mu(s),\nu(s)\right)\ud t.
\end{equation*}

It remains to deal with the case in which $\mu_0\left(\bR\times\{0\}\right)>0$. If $\mu_0\left(\bR\times\{0\}\right)=1$, i.e. if $h_0=0$ almost surely, it was already noticed that the only solution to (\ref{eq:MKV_2}) is $\hat{h}(t)=0$ for every $t$ almost surely, whatever $\mu(\cdot)$ is chosen. Therefore it is clear that the only solution to (\ref{eq:MKV}) is $\left(\tilde{x}(t),\tilde{h}(t),\rL_{\tilde{x}(t),\tilde{h}(t)}\right)=\left(\tilde{x}(t),0,\rL_{\tilde{x}(t)}\otimes\delta_o\right)$, where $\tilde{x}$ is the unique solution to the first equation in (\ref{eq:MKV}) corresponding to $h(t)=0$ for every $t$ almost surely.\\
  If instead $0<\mu_0(\bR\times\{0\})<1$ then $M(\mu_0)>0$ and we can again condition on the sets $\{h_0=0\}$ and $\{h_0>0\}$. As the solution $(\tilde{x}(\cdot),\tilde{h}(\cdot))$ is uniquely determined on each set, well-posedness of (\ref{eq:MKV}) follows.
  \end{proof}

Now we prove well-posedness of the Fokker-Planck equation (\ref{eq:FP}). We begin with a lemma about solutions of the Kolmogorov equation dual to a linearized version of (\ref{eq:FP}).
\begin{lemma}
  \label{lem:kolm}
  Fix $\mu(\cdot)\in C^{\frac12}\left([0,T];\sP_2\right)$ and define
  \begin{equation*}
    e^\mu(t,x,h)=
    \begin{pmatrix}
      D_pH_0\left(D_xV(t,x,h)\right)\\
      D_pH_1\left(x,h,\mu(t),D_hV(t,x,h)\right).
    \end{pmatrix}
  \end{equation*}
For every $\bar{t}\in[0,T]$ and every $\phi\in C^2_c\left(\bR\times\bR_+\right)$ there exists $u\in C^{1,2}\left([0,\bar{t}]\times\bR\times\bR_+\right)$ that satisfies
\begin{equation}
  \label{eq:Kolm}
  \begin{cases}
    \frac{\partial u}{\partial t}(t,x,h)+\frac12\mathrm{Tr}\left[g(x,h)g^\ast(x,h)D^2u(t,x,h)\right]+e^\mu(t,x,h)Du(t,x,h)=0,\\
    u(\bar{t},x,h)=\phi(x,h).
  \end{cases}
\end{equation}
\end{lemma}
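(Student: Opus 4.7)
The plan is to construct $u$ via a Feynman--Kac representation, obtain interior $C^{1,2}$ regularity by reducing to a uniformly parabolic Cauchy problem after a logarithmic change of variable, and then match this with the appropriate behavior at the degenerate boundary $\{h=0\}$. Concretely, using the SDE of Proposition~\ref{prop:MKV} with drift $e^\mu$ and diffusion $\mathbf{G}$ (here $\mu(\cdot)$ is frozen, so no fixed-point issue arises), denote by $\left(\hat{X}^{t,x,h}(s),\hat{H}^{t,x,h}(s)\right)_{s\in[t,\bar t]}$ the unique strong solution starting at $(x,h)$ at time $t$, and define
\begin{equation*}
  u(t,x,h):=\bE\left[\phi\left(\hat X^{t,x,h}(\bar t),\hat H^{t,x,h}(\bar t)\right)\right].
\end{equation*}

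For interior regularity on $[0,\bar t]\times\bR\times\bR_{++}$, I would perform the same exponential change of variable $y=\log h$ used in Proposition~\ref{prop:MKV} (and, in the PDE setting, in Proposition~\ref{prop:VFestimates}) to transform \eqref{eq:Kolm} into a linear uniformly parabolic backward Cauchy problem on $[0,\bar t]\times\bR^2$ with constant diffusion matrix $\mathrm{diag}(\epsilon,\chi)$. The transformed drift coefficients are bounded (Assumption~\ref{ass:DH_0} and Lemma~\ref{lem:DpH}) and Hölder continuous in space (combining the $C^{1,2}$ regularity of $V$ from Proposition~\ref{prop:MFGregularity} with Assumption~\ref{ass:DH_0} and Lemma~\ref{lem:DpH_2}) and in time (using the $C^{1/2}$-in-time regularity of $\mu(\cdot)$ together with Lemma~\ref{lem:DpH_2}, noting that on bounded sets of $\sP_2$ the local Lipschitz constants are uniform and $M(\mu(\cdot))$ stays away from $0$ on the trajectories we need to track, or, if $M(\mu(\cdot))\equiv 0$, the drift $D_pH_1$ reduces to the explicit Lipschitz form $-\zeta h$). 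The terminal datum $\phi(x,e^y)$ belongs to $C^2_b(\bR^2)$. Applying classical Schauder/Solonnikov theory for linear parabolic Cauchy problems with Hölder coefficients (e.g., Ladyzhenskaya--Solonnikov--Ural'ceva or Friedman) then produces a unique classical $C^{1,2}$ solution; inverting the change of variable gives a $C^{1,2}$ solution on $[0,\bar t]\times\bR\times\bR_{++}$ which, by Itô's formula and uniqueness, coincides with the probabilistic $u$.

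To extend the regularity to the closed half-plane, I would use the degenerate structure at $\{h=0\}$. Since $D_pH_1(x,0,\mu,p)=0$ and the diffusion coefficient $\chi h$ also vanishes there, solutions starting at $h=0$ satisfy $\hat H^{t,x,0}\equiv 0$, so $u(t,x,0)=\bE[\phi(\hat X^{t,x,0}(\bar t),0)]$ solves the one-dimensional, uniformly parabolic Cauchy problem
\begin{equation*}
  \partial_t u+\tfrac12\epsilon^2 D^2_{xx}u+D_pH_0(D_xV(t,x,0))D_x u=0,\qquad u(\bar t,x,0)=\phi(x,0),
\end{equation*}
which is again covered by classical theory. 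Matching with the interior solution will follow by passing to the limit $h\to 0^+$ in the Feynman--Kac representation: continuous dependence of the SDE on the initial condition together with $\bE[\hat H^{t,x,h}(\bar t)^2]\to 0$ as $h\to 0$ (which follows from the linear multiplicative structure of the $h$-diffusion and the sublinear bound on $D_pH_1$ from Lemma~\ref{lem:DpH}) gives continuity up to the boundary, while $C^{1,2}$ regularity up to $\{h=0\}$ is obtained via interior Schauder estimates on strips $\{\delta<h<N\}$ that are uniform as $\delta\to 0$, exploiting the fact that $h=0$ is a natural (Feller-entrance/absorbing) boundary for which no boundary condition is prescribed.

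The main obstacle is the passage to the boundary $\{h=0\}$: standard parabolic theory does not apply directly to the closed half-plane because the operator degenerates there. The key technical point will be to show that the Schauder estimates obtained on $\{\delta<h<N\}$ are stable as $\delta\to 0$ and $N\to\infty$, which relies crucially on the vanishing of both the $h$-diffusion and the $h$-drift at $h=0$ and on the smoothness and compact support of $\phi$ on $\bR\times\bR_+$.
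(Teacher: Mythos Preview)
Your proposal shares the paper's key technical device --- the logarithmic change of variable $y=\log h$ that converts the degenerate operator into a uniformly parabolic one on $\bR^2$ --- but diverges in the regularity machinery invoked afterwards. The paper's proof is extremely terse (essentially two sentences): after the change of variable it simply observes that the resulting linear operator generates an analytic semigroup in the weighted space $C_w(\bR^2,\bR)$ (the same spaces used in Proposition~\ref{prop:VFestimates}) and appeals to the smoothing properties of analytic semigroups from Chapter~3 of Lunardi's book to obtain the $C^{1,2}$ regularity. No Feynman--Kac representation, no Schauder estimates, and no discussion of the boundary $\{h=0\}$ appear.

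Your route via Feynman--Kac plus classical Schauder theory is a legitimate alternative and has the advantage of giving a concrete formula for $u$ and making the degenerate boundary behavior transparent. It does, however, require you to verify H\"older continuity \emph{in time} of the transformed drift, which depends on $D_hV(t,\cdot)$; this time regularity is not directly stated in Propositions~\ref{prop:MFGregularity}--\ref{prop:VFestimates}, so you would need an extra argument (e.g., parabolic interior estimates or, circularly, the semigroup approach). The analytic-semigroup route bypasses this issue because the smoothing is automatic once analyticity is established. Conversely, your explicit treatment of $\{h=0\}$ addresses a point the paper's proof simply omits; note that after the change of variable $h=0$ corresponds to $y\to-\infty$, so regularity up to the closed half-line does not follow from the transformed problem alone, and your matching argument is genuinely needed if one insists on the statement as written.
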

\begin{proof}
  To prove such results we first perform, as in the proof of Proposition 3.3, the same exponential change of variable $e^y=h$.
Once we do this, thanks to Remark 2.4 and Lemma 3.4, we know that
the linear operator associated to the resulting linear PDE generates an analytic semigroup in the weightes spaces $C_w(\R^2,\R)$. Such analyticity, thanks to the result of Chapter 3 of the book of Lunardi \cite{Lunardi}, the required regularity.
\end{proof}
\begin{theorem}\label{thm:FKPunique}
  Let $T>0$ belong to the interval given in (\ref{eq:interval}). There is a unique measure-valued solution to \eqref{eq:FP} on $[0,T]$, in the sense of Definition~\ref{def:sol}.
\end{theorem}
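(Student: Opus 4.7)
The plan is to use the unique strong solution $(\tilde{x}(\cdot),\tilde{h}(\cdot))$ to the McKean--Vlasov system (\ref{eq:MKV}) provided by Proposition~\ref{prop:MKV} and define
\[
\mu(t):=\rL_{(\tilde{x}(t),\tilde{h}(t))}.
\]
For any $\phi\in C^\infty_c([0,T]\times\bR\times\bR_+)$, an application of It\^o's formula to $\phi(t,\tilde{x}(t),\tilde{h}(t))$ yields, after taking expectations, exactly the integral identity (\ref{eq:sol_weak}): the first-order terms reproduce the drift contributions through $D_pH_0$ and $D_pH_1$, while the second-order terms give the Laplacian-type expressions with $\epsilon^2$ and $\chi^2 h^2$. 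The stochastic integrals have zero expectation thanks to the uniform $L^2$-bound on $\tilde{h}(\cdot)$ established inside the proof of Proposition~\ref{prop:MKV}. The boundary condition $\mu(0)=\mu_0$ holds by construction, and $\mu(\cdot)$ takes values in $\sP_2$ thanks to the same second-moment bound.

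\textbf{Uniqueness: setup.} Assume $\mu_1(\cdot),\mu_2(\cdot)$ are two solutions on $[0,T]$ with $\mu_1(0)=\mu_2(0)=\mu_0$. Fix $\bar t\in[0,T]$ and $\phi\in C^2_c(\bR\times\bR_+)$, and let $u^{\mu_i}\in C^{1,2}([0,\bar t]\times\bR\times\bR_+)$ be the solution of the dual Kolmogorov equation (\ref{eq:Kolm}) associated to $\mu_i$, given by Lemma~\ref{lem:kolm}. Inserting (a suitable compactly supported approximation of) $u^{\mu_1}$ as test function in the weak formulation (\ref{eq:sol_weak}) satisfied by $\mu_1$, and similarly $u^{\mu_2}$ in the one satisfied by $\mu_2$, one derives
\[
\int \phi\,\ud\bigl(\mu_1(\bar t)-\mu_2(\bar t)\bigr) = \int_0^{\bar t}\!\!\int \bigl(e^{\mu_1}-e^{\mu_2}\bigr)(t,x,h)\cdot Du^{\mu_1}(t,x,h)\,\mu_2(t;\ud x,\ud h)\,\ud t,
\]
where $e^\mu$ is the drift vector introduced in Lemma~\ref{lem:kolm}.

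\textbf{Uniqueness: Gronwall.} Because $V$ satisfies the regularity in Proposition~\ref{prop:VFestimates} and $D_pH_0$ is locally Lipschitz by Assumption~\ref{ass:DH_0}, while $D_pH_1$ is locally Lipschitz in the measure variable in the sense of Lemma~\ref{lem:DpH_2}, one can bound
\[
\bigl|e^{\mu_1}(t,x,h)-e^{\mu_2}(t,x,h)\bigr| \leq C_N\,\sW_1\bigl(\mu_1(t),\mu_2(t)\bigr)
\]
on truncations where the masses near $h=0$ are controlled and the first two moments of both $\mu_i(t)$ stay in a ball of radius $N$. Choosing $\phi$ ranging over a countable dense family in $C^2_c(\bR\times\bR_+)$ and using the Rubinstein--Kantorovich duality we deduce, on the truncation, an inequality of the form
\[
\sW_1^2\bigl(\mu_1(\bar t),\mu_2(\bar t)\bigr)\leq C_N'\int_0^{\bar t}\sW_1^2\bigl(\mu_1(t),\mu_2(t)\bigr)\,\ud t,
\]
so that $\mu_1\equiv\mu_2$ by Gronwall's lemma; the truncation is then removed by letting $N\to\infty$ exactly as in the $\tau_N$-argument in the proof of Proposition~\ref{prop:MKV}.

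\textbf{Main obstacle.} The delicate point is the degeneracy of the diffusion at $h=0$, compounded by the fact that Lemma~\ref{lem:DpH_2} only gives Lipschitzianity of $D_pH_1$ with respect to $\mu$ away from $M(\mu)=0$. As in the proof of Proposition~\ref{prop:MKV}, I would split according to $\mu_0(\bR\times\{0\})\in\{0,(0,1),1\}$; in the non-degenerate cases, pass through the logarithmic change of variables $y=\log h$ to work on $\bR\times\bR$ with uniformly elliptic coefficients, so that $u^{\mu_i}$ is smooth enough and $\underline{M}(\mu_i(\cdot))>0$ on $[0,T]$ by lower-semicontinuity of $t\mapsto\bE[\tilde h(t)]$; in the degenerate case $\mu_0(\bR\times\{0\})=1$ the second component of both solutions is identically $\delta_0$ and uniqueness reduces to the linear $x$-equation. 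Reassembling the cases via the conditioning on $\{h_0=0\}$ and $\{h_0>0\}$ already used in Proposition~\ref{prop:MKV} completes the argument.
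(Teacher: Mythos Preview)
Your existence argument is correct and is exactly what the paper does.

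Your uniqueness argument, however, takes a different route from the paper and has two genuine gaps.

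\textbf{First gap: the gradient bound on $u^{\mu_1}$.} From your identity
\[
\int \phi\,\ud\bigl(\mu_1(\bar t)-\mu_2(\bar t)\bigr) = \int_0^{\bar t}\!\!\int \bigl(e^{\mu_1}-e^{\mu_2}\bigr)\cdot Du^{\mu_1}\,\mu_2\,\ud t
\]
you want to pass, via Kantorovich--Rubinstein duality, to an inequality for $\sW_1(\mu_1(\bar t),\mu_2(\bar t))$. For that you must take the supremum over $\phi$ with Lipschitz constant $\leq 1$, and this requires a bound of the form $\sup_{t,x,h}|Du^{\mu_1}(t,x,h)|\leq C$, \emph{uniform} over all such $\phi$. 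Lemma~\ref{lem:kolm} only gives existence of a $C^{1,2}$ solution for each fixed $\phi$; it says nothing about how $|Du^{\mu_1}|$ depends on $\mathrm{Lip}(\phi)$. Such a gradient estimate would amount to Lipschitz regularity of the stochastic flow associated with $e^{\mu_1}$, which the paper never establishes (and which is delicate here because the coefficients are only locally Lipschitz and degenerate at $h=0$).

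\textbf{Second gap: positivity of $M(\mu_2(t))$.} To invoke Lemma~\ref{lem:DpH_2} you need $M(\mu_1(t))\wedge M(\mu_2(t))>1/N$. For $\mu_1$ (the McKean--Vlasov law) this follows from the SDE argument in Proposition~\ref{prop:MKV}. But $\mu_2$ is an \emph{abstract} weak solution of the Fokker--Planck equation; the lower-semicontinuity argument for $t\mapsto\bE[\tilde h(t)]$ applies to processes, not to measure-valued solutions, so you have no a priori control on $\underline{M}(\mu_2(\cdot))$. Your case split and conditioning on $\{h_0=0\}$ versus $\{h_0>0\}$ does not resolve this, because nothing prevents an abstract FP solution from shifting mass toward $\{h=0\}$ in positive time.

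\textbf{What the paper does instead.} The paper's uniqueness proof avoids both difficulties by never comparing $e^{\mu_1}$ with $e^{\mu_2}$. It proceeds in two steps. First, it uses the Kolmogorov duality (Lemma~\ref{lem:kolm}) to show that for \emph{any fixed} $\nu(\cdot)$ the \emph{linear} Fokker--Planck equation with drift $e^\nu$ has a unique weak solution: testing against $u$ gives $\int\phi\,\ud\mu(\bar t)=\int u(0,\cdot)\,\ud\mu_0$, which determines $\mu(\bar t)$ for every $\phi$. Second, given any solution $\nu(\cdot)$ of the \emph{nonlinear} equation \eqref{eq:FP}, one solves the ordinary SDE (\ref{eq:MKV_2}) with drift $e^\nu$; its law solves the linear FP with drift $e^\nu$, hence equals $\nu(\cdot)$ by step one. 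But then the pair $(\mathbf{x},\nu)$ solves the McKean--Vlasov equation (\ref{eq:MKV}), and uniqueness of the latter (Proposition~\ref{prop:MKV}) forces $\nu=\mu$. This ``superposition'' argument needs neither a uniform gradient estimate on $u$ nor the Lipschitz-in-$\mu$ property of $D_pH_1$.
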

\begin{proof}
  By Ito formula, the law $\rL_{(\tilde{x}(\cdot),\tilde{h}(\cdot))}$ of any solution to (\ref{eq:MKV}) is a solution to (\ref{eq:FP}). Denote this solution by $\mu(\cdot)$.
Fix $\phi$ and consider $e^\mu$ as in Lemma~\ref{lem:kolm} for this particular $\mu(\cdot)$. Let now $u$ be a $C^{1,2}$ solution to (\ref{eq:Kolm}). Using $u$ as a test function, we obtain that for every $0\leq t_0\leq t\leq \bar{t}$ the equality
\begin{multline*}
  \int_{\bR\times\bR_+} u(t,x,h)\mu(t,\ud x,\ud h) - \int_{\bR\times\bR_+} u(t_0,x,h)\mu(t_0,\ud x,\ud h)\\=\int_{t_0}^t\int_{\bR\times\bR_+} \frac{\partial u}{\partial t}(r,x,h)\mu(r,\ud x,\ud h)\ud r \\+ \frac12\int_{t_0}^t\int_{\bR\times\bR_+} \frac12\mathrm{Tr}\left[g(x,h)g^\ast(x,h)D^2u(r,x,h)\right]\mu(r,\ud x,\ud h)\ud r\\ + \int_{t_0}^t\int_{\bR\times\bR_+}e^\mu(r,x,h)\cdot Du(r,x,h)\mu(r,\ud x,\ud h)
\end{multline*}
holds. But this implies that
\begin{equation*}
  \int_{\bR\times\bR_+}\phi(x,h)\mu(t,\ud x,\ud h)=\int_{\bR\times\bR_+}u(0,x,h)\mu_0(\ud x,\ud h);
\end{equation*}
letting $\phi$ vary in $C^2_c$ we uniquely determine $\mu(t)$. Therefore $\mu(\cdot)$ is the unique solution to the linear Fokker-Planck equation with drift $e^\mu$.\\
If now $\nu(\cdot)$ is another solution to (\ref{eq:FP}), let $\mathbf{x}=(x^\prime,h^\prime)$ solve
\begin{equation}
  \label{eq:MKV_3}
  \begin{cases}
    \ud \mathbf{x}(t)=e^\nu(t,\mathbf{x}(t))\ud t+g(\mathbf{x}(t))\ud \mathbf{W}(t),\\
    \mathbf{x}(0)=(x_0,h_0).
  \end{cases}
\end{equation}
    There exists only one such $\mathbf{x}$: indeed (\ref{eq:MKV_3}) is simply equation (\ref{eq:MKV_2}) with $\nu$ in place of $\mu$. Therefore, by Ito formula, its law $\rL_{\mathbf{x}}$ is a solution to the linear Fokker-Planck equation
    \begin{equation*}
      \frac{\partial \eta}{\partial t}(t)=\frac12\sum_{i=x,h} D^2_{i,i}\left(g g^\ast \eta(t)\right)-\mathrm{div}\left(e^\nu \eta(t)\right).
    \end{equation*}
    This yields, by uniqueness as shown above, that $\rL_{\mathbf{x}(t)}=\nu(t)$ for every $t\in[0,T]$. In turn, this implies that $(\mathbf{x},\nu)$ solve the McKean-Vlasov equation (\ref{eq:MKV}); but solutions to the latter are unique, therefore $\nu(t)=\mu(t)$ and uniqueness is proven.
\end{proof}

\section{Solution to the Mean Field Game}\label{sec:MFG}
We will need the following simple lemma.
	\begin{lemma}\label{lem:hmoment}
Let $\mu(\cdot) \in C([0,T], \mathcal{P}_2)$ and $(s(\cdot),v(\cdot))\in\sK$ be fixed. Then the solution $(x(\cdot)h(\cdot))$ to (\ref{eq:evolxnew})-(\ref{eq:evolhnew1}) with initial condition $(x(t_0),h(t_0))=(x_0,y_0)$ having law $\mu_0$ satisfies for every $t\in [t_0,T]$
\begin{equation}\label{eqn:stimamediah}
 \bE\left[h(t)\right]\leq  2 e^{\frac{C_{2,2}}{2}(T-t_0)}\bE \left[h_0^2\right]^{\frac{1}{2}}.
\end{equation}
Moreover if $\nu\in C([0,T];\sP_2)$ and we denote by $(x_\mu(\cdot),h_\mu(\cdot))$ and $(x_\nu(\cdot),h_\nu(\cdot))$ the solutions to (\ref{eq:evolxnew})-(\ref{eq:evolhnew1}) with initial condition $(x(t_0),h(t_0))=(x_0,y_0)$ having law $\mu_0$ and drift evaluated along $\mu(\cdot)$ and $\nu(\cdot)$, respectively, we have
\begin{equation}
  \label{eq:delta_h_mu}
  \bE\left[\sup_{t\in[0,T]}\left\vert h_\mu(t)-h_\nu(t)\right\vert^2\right]\leq C(\mu,\nu,\mu_0,T) d_{\infty,2}(\mu(\cdot),\nu(\cdot))^2;
\end{equation}
where
\begin{multline}
  \label{eq:C_long}
  C(\mu,\nu,\mu_0,T)=24T^2 \left(\frac{\Theta}{\theta^2}\left(L_{\eta_1}+L_{\eta_2}^{-1}\right)\max\left\{P_2(\mu)^{\frac12},P_2(\nu)^{\frac12}\right\}+\frac{\Theta^2}{\theta^2}\right)^2\\\cdot\bE\left[h_0^2\right]e^{TC_{2,2}(T,\nu(\cdot))+6TL_f^2\frac{\Theta^2}{\theta^2}M(\mu)^2+3T\zeta^2+3\chi^2}.
\end{multline}
In particular, for fixed $T$ and $\mu_0$, $C(\mu,\nu,\mu_0,T)$ is bounded uniformly for $\mu(\cdot),\nu(\cdot)$ in compact sets of $C([0,T];\sP_2)$.
\end{lemma}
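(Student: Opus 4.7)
The first estimate~\eqref{eqn:stimamediah} is immediate from the moment bound already established in~\eqref{eq:est_h_1}: applied with $p=2$ it gives $\bE[\sup_{s\in[t_0,t]}h(s)^2]\leq 4e^{C_{2,2}(t-t_0)}\bE[h_0^2]$, and Jensen/Cauchy--Schwarz yields $\bE[h(t)]\leq\bE[h(t)^2]^{1/2}\leq 2e^{C_{2,2}(T-t_0)/2}\bE[h_0^2]^{1/2}$. So this part is a direct corollary of the preliminary SDE estimates and requires no new work.

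For~\eqref{eq:delta_h_mu} the plan is to write down the equation satisfied by $\delta(t):=h_\mu(t)-h_\nu(t)$ and close it by Gronwall. First observe that the $x$-equation \eqref{eq:evolxnew} does not depend on the measure argument, so with identical initial conditions, controls and Brownian motion we have $x_\mu(\cdot)=x_\nu(\cdot)$ pathwise; call this common trajectory $x(\cdot)$. Subtracting the two $h$-equations, $\delta$ solves
\begin{equation*}
  \ud\delta(t)=s(t)\bigl[f(h_\mu(t))F(x(t),\mu(t))-f(h_\nu(t))F(x(t),\nu(t))\bigr]\,\ud t-\zeta\delta(t)\,\ud t+\chi\delta(t)\,\ud W^2(t),\quad\delta(t_0)=0.
\end{equation*}
The key step is the telescoping decomposition
\begin{equation*}
  f(h_\mu)F(x,\mu)-f(h_\nu)F(x,\nu)=\bigl(f(h_\mu)-f(h_\nu)\bigr)F(x,\mu)+f(h_\nu)\bigl(F(x,\mu)-F(x,\nu)\bigr).
\end{equation*}
The first summand is controlled using $|f(h_\mu)-f(h_\nu)|\leq L_f|\delta|$ and the uniform bound $F(x,\mu)\leq\frac{\Theta}{\theta}M(\mu)$ from Lemma~\ref{barh}(1). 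The second summand is controlled using $f(h_\nu)\leq L_f h_\nu$ together with the Wasserstein--Lipschitz estimate $|F(x,\mu)-F(x,\nu)|\leq\frac{\Theta}{\theta^2}\bigl(M_2(\mu)^{1/2}(L_{\eta_1}+L_{\eta_2})+\Theta\bigr)\sW_2(\mu,\nu)$ from Lemma~\ref{barh}(3), and the comparison $M_2(\mu)\leq P_2(\mu)$ recalled in the Remark after Lemma~\ref{barh}.

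Taking $\sup_{r\in[t_0,t]}$, squaring and taking expectation, I split into three pieces ($a+b+c)^2\leq 3(a^2+b^2+c^2)$: a drift piece giving $\int_{t_0}^t\bE[\delta(r)^2]\,\ud r$ multiplied by $L_f^2\frac{\Theta^2}{\theta^2}M(\mu)^2$ and constants from $\zeta$; a $\sW_2$-piece giving $\sup_{r\le T}\sW_2(\mu(r),\nu(r))^2$ multiplied by $L_f^2$ times the Lipschitz-in-measure constant squared times $\bE[\sup_{r\le T}h_\nu(r)^2]$; and the stochastic piece, to which I apply the Burkholder--Davis--Gundy inequality to get $4\chi^2\int_{t_0}^t\bE[\delta(r)^2]\,\ud r$. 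The $\bE[\sup h_\nu^2]$ factor is bounded by $4e^{C_{2,2}(T,\nu(\cdot))(T-t_0)}\bE[h_0^2]$ using~\eqref{eq:est_h_1} applied along $\nu(\cdot)$, which is precisely how the $C_{2,2}(T,\nu(\cdot))$ term enters the exponent in~\eqref{eq:C_long}. Then Gronwall's lemma absorbs the $\int_{t_0}^t\bE[\sup_{r\le s}\delta(r)^2]\,\ud s$ term and yields the exponential factor $e^{TC_{2,2}(T,\nu(\cdot))+6TL_f^2\Theta^2\theta^{-2}M(\mu)^2+3T\zeta^2+3\chi^2}$, finishing the proof.

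The main obstacle is not conceptual but purely bookkeeping: matching the explicit numerical constants in~\eqref{eq:C_long} requires careful accounting of where each factor of $3$, of $L_f$, of $\frac{\Theta}{\theta^2}$, and of $M(\mu)$ versus $M_2(\mu)^{1/2}$ originates, and aligning the BDG constant $4$ with the $\chi^2$ prefactor so that the exponent $3\chi^2$ on the right-hand side emerges. The final remark that $C(\mu,\nu,\mu_0,T)$ is bounded uniformly on compacts is then automatic, since $P_2$, $M$ and $C_{2,2}(T,\cdot)$ are all continuous on $C([0,T];\sP_2)$.
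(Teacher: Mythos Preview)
Your proposal is correct and follows exactly the approach the paper sketches: the paper's own proof is just two sentences, saying that \eqref{eqn:stimamediah} follows from H\"older's inequality applied to the second estimate in \eqref{eq:est_h_1}, and that \eqref{eq:delta_h_mu} follows from the Lipschitz properties of $f$ and $F$ (Lemma~\ref{barh}), from \eqref{eq:est_h_1}, and Gronwall's inequality. Your write-up simply fills in those details---the observation that $x_\mu\equiv x_\nu$, the telescoping of $f(h_\mu)F(x,\mu)-f(h_\nu)F(x,\nu)$, the BDG step, and the Gronwall closure---and your honest remark that the only obstacle is bookkeeping the explicit constants in \eqref{eq:C_long} is accurate.
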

\begin{proof}
Estimate \eqref{eqn:stimamediah} follows by H\"older's inequality applied to the second estimate in (\ref{eq:est_h_1}). Estimate (\ref{eq:delta_h_mu}) follows from the Lipschitz property of $f$ and $F$ (see Lemma~\ref{barh}), from (\ref{eq:est_h_1}) and Gronwall's inequality.
\end{proof}

\begin{lemma}\label{lem:locgradV}
Let $\mu_n(\cdot) \to \mu(\cdot)$ in  $C([0,T], \mathcal{P}_2)$ and let $V_n, V$ be the corresponding value functions defined via \eqref{eqn:V}. Then for all $r>0$ and all $n\in\bN$ it holds
\begin{equation}\label{eq:Vn}
\sup_{[0,T]\times (-r,r)\times (0,r)}\left\vert V_n\right\vert\leq C(T, r,\mu)
\end{equation}

\begin{equation}\label{eq:Dvn}
\sup_{[0,T]\times  (-r,r)\times(0,r)}\left(|D_x V_n|+|D_hV_n|\right)\leq C(T, r,\mu),
\end{equation}
\begin{equation}\label{eq:DVbetan}
\left(|D_x V_n|_{[0,T]\times (-r,r)\times (0,r)}^{(\beta)}+|D_h V_n|_{[0,T]\times (-r,r)\times (0,r)}^{(\beta)}\right)\leq C(T, r,\mu),
\end{equation}
where $|\cdot|_{[0,T]\times (-r,r)\times (0,r)}^{(\beta)}$ denotes the H\"older seminorm of exponent $\beta$ in $[0,T]\times (-r,r)\times (0,r)$, $\beta>0$, and $C(T, r, \mu)$ depends on $T,r,\overline{M}(\mu(\cdot))$ and is independent of $n$.
\end{lemma}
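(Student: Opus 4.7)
The plan is to exploit the uniform estimates available from the preceding results, noting that all the relevant constants depend on $\mu_n$ only through its second-moment profile, which is uniformly controlled by the convergence hypothesis. First, since $\mu_n(\cdot)\to\mu(\cdot)$ in $C([0,T];\sP_2)$, the set $\{\mu_n(t):n\in\bN,\,t\in[0,T]\}\cup\{\mu(t):t\in[0,T]\}$ is relatively compact in $\sP_2$ and therefore $2$-uniformly integrable, which yields $N:=\sup_n\overline{M}_2(\mu_n(\cdot))<+\infty$. This $N$ is the only quantitative input needed for the rest of the argument.

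To establish \eqref{eq:Vn} I would use the probabilistic representation \eqref{eqn:V}. The inequality $V_n\geq 0$ follows by admissibility of $(v,s)=(0,1)$, which yields $U_\sigma\equiv 0$ and $a(0)=0$. For the upper bound, Lemma~\ref{barh} gives $U_\sigma(x,h,s,\mu_n)\leq C\,\overline{M}(\mu_n(\cdot))^{\gamma(1-\sigma)}h^\eta$ with $\eta=(1-\gamma)(1-\sigma)\in(0,1)$, and $a$ is bounded on the compact set $K$. Applying the moment estimate \eqref{eq:est_h_1} together with Jensen's inequality (to pass from the $L^2$ bound on $h$ to an $L^\eta$ bound) one obtains $\bE[h(t)^\eta]\leq C(T,N)h_0^\eta$ along any admissible trajectory starting from $h_0$. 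Integrating in time then produces the uniform bound claimed in \eqref{eq:Vn}.

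For the derivative bound \eqref{eq:Dvn} and the H\"older seminorm bound \eqref{eq:DVbetan} I would rely on local parabolic regularity applied to the HJB equation satisfied by $V_n$. By Lemmas~\ref{barh}, \ref{lem:DpH} and \ref{lem:DpH_2}, the coefficients of \eqref{eqn:HJB} are bounded and Lipschitz continuous with constants depending on $\mu_n$ only through $\overline{M}(\mu_n(\cdot))$ and $\overline{M}_2(\mu_n(\cdot))$, hence uniformly controlled by $N$. On any subdomain of the form $[0,T]\times(-r,r)\times(h_1,r)$ with $h_1>0$ the equation is uniformly parabolic, so the interior Schauder estimates of Lieberman's Theorem~12.22 (already used in the proof of Proposition~\ref{prop:MFGregularity} item~4) yield $C^{1+\beta,2+\beta}$ bounds depending only on $\|V_n\|_{L^\infty}$ (controlled by Step~2) and on the uniform bounds on the coefficients.

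The hardest step is to extend these bounds down to $h=0$, where the HJB degenerates and standard Schauder theory does not apply directly. Here I would repeat the exponential change of variable $y=\log h$ of Proposition~\ref{prop:VFestimates}: in the new coordinates the equation becomes uniformly parabolic on all of $\R^2$ with coefficients controlled uniformly by $N$, and the analytic-semigroup framework of Lunardi, Chapter~7, delivers a priori bounds for $w_n(t,x,y):=V_n(t,x,e^y)$ in the weighted space $C^1_w$ with constants uniform in $n$. Reverting the change of variable yields \eqref{eq:Dvn} and \eqref{eq:DVbetan} on $[0,T]\times(-r,r)\times(0,r)$. The delicate technical point is to track explicitly that the constants in the Lunardi estimates depend on $\mu_n$ only through $N$, which is exactly where Step~1 is used.
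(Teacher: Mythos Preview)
Your plan shares the two essential ideas with the paper's proof: the probabilistic bound on $|V_n|$ via the value-function representation, and the change of variable $y=\log h$ to remove the degeneracy at $h=0$. The paper's execution is slightly different and cleaner. After the change of variable it applies a single interior parabolic estimate---Theorem~3.1 of Ladyzhenskaja--Solonnikov--Uraltseva \cite{LadyP}, Chapter~V---to the transformed equation \eqref{MFGW}, which in one stroke yields both the $L^\infty$ gradient bound and the gradient H\"older seminorm for $W_n(t,x,y)=V_n(t,x,e^y)$, with constants depending only on $\sup|W_n|$ and on the coefficients through $g(\overline M(\mu_n))$, $g_1(\overline M(\mu_n))$. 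Your split into ``away from $h=0$'' (Lieberman Schauder) versus ``near $h=0$'' (Lunardi) is therefore unnecessary: once you pass to $y$-coordinates the equation is uniformly parabolic on all of $\bR^2$ and one interior estimate covers everything.

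There is also a genuine gap in your near-boundary step as written. The Lunardi machinery of Proposition~\ref{prop:VFestimates} places $w_n$ in $C^1_w$, which controls $\sup|D_y w_n|$ (hence $\sup|hD_hV_n|$) but says nothing about the H\"older seminorm of $D_y w_n$, so \eqref{eq:DVbetan} is not established by that route. You would still need a quantitative $C^{1,\beta}$ estimate after the change of variable, and at that point you are back to citing \cite{LadyP} or the equivalent Lieberman result---which is exactly what the paper does directly.
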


\begin{proof}
  We will use the fact that the value function is a viscosity solution to the HJB equation and apply standard estimates available in the literature. The uniform estimates for the derivative of $V_n$ with respect to $x$ follow immediately from Theorem $3.1$ of \cite{LadyP}, Chapter V with $Q_T=[0,T]\times (-r-1,r+1)\times (0,r+1)$ and $Q_T'=[0,T]\times (-r,r)\times (0,r)$.\\
  However, for the derivatives with respect to $h$ the available estimates hold for uniformly parabolic equations, whereas our Hamilton-Jacobi-Bellman equation degenerates in $h=0$. To overcome this issue, we apply the change of variable
\begin{equation}\label{eq:change}
y=\log h, \quad h>0.
\end{equation}
Then the value function $V_n(t,x,e^y):=W_n(t,x,y)$ is a solution to the Hamilton-Jacobi-Bellman equation
\begin{equation}\label{MFGW}
-\partial_t W_n+\rho W_n=\tilde H_1(t,x,y,\mu_n(t), D_yW_n)+\frac{1}{2}\chi^2D^2_{yy}W_n+H_0(D_xW_n)
\end{equation}
 in $(0,T)\times \bR\times \R$, where
\begin{equation*}
\tilde H_1(x,y,\mu(t), D_yW)=\sup_{ s\in [0,1]}\left\{B_1(s,  x, y, \mu(t))D_{y} W+ u_\sigma\left( B_2(s, x, y, \mu(t))\right)\right\},
\end{equation*}
with
\begin{equation*}
	B_1(s,x, y,\mu(t))=sf(e^y)e^{-y} F(x,\mu(t)) -\left(\zeta+\frac{\chi^2}{2}\right)
      \end{equation*}
and
\begin{equation*}
	B_2(s,  x, y, \mu(t))=A(x)(1-s)^{1-\gamma}f(e^y)^{1-\gamma} F(x,\mu(t))^\gamma
      \end{equation*}
        Once we have proved the analogues of \eqref{eq:Vn}, \eqref{eq:Dvn} and \eqref{eq:DVbetan} for $W_n$ and its derivatives with respect to $h$, via the change of variable \eqref{eq:change} in the estimate we can easily conclude \eqref{eq:Vn}, \eqref{eq:Dvn} and \eqref{eq:DVbetan} for $V_n$.\\

Similarly to Proposition \ref{prop:MFGregularity} with $\mu(t)=\mu_n(t)$ one can prove that $W_n \in C^{1,2}((0,T)\times \R\times \R)$. Then we apply again Theorem $3.1$ of \cite{LadyP}, Chapter V with $Q_T$ and $Q_T'$ as above (note that the Hamilton-Jacobi-Bellman in \eqref{MFGW} is uniformly parabolic).  To apply the theorem we exploit the estimates proved in Lemma \ref{lem:DpH}.\\
Moreover, the estimates of Theorem $3.1$ of \cite{LadyP}  depend  continously on $g(\mu_n), g_1(\mu_n)$ (defined in Lemma~\ref{lem:DpH}), and on $\sup_{Q_T}|W_n|$. Since $g, g_1$ are continuous in $M(\mu)$ and $M(\mu_n(t))\to M( \mu(t))$ uniformly with respect to $t$, for $n$ large enough $g(\mu_n)$ can be estimated by a costant independent on $n$.\\
Eventually we estimate  $\sup_{Q_T}|W_n|$. Recalling that $V_n$ is the value function as defined in (\ref{eqn:V}) for the functional $J$ given by (\ref{eqn:J}), using the assumptions on $f$ and on $F$, by Lemma \ref{barh}, Jensen's inequality and Lemma \ref{lem:hmoment} we have
\begin{eqnarray*}
|W_n(t_0, x_0, y_0)|=|V_n(t_0,x_0,e^{y_0})|&\leq&  C\overline{M}(\mu_n(\cdot))^{\gamma(1-\sigma)}\int_{t_0}^T e^{-\rho t}\mathbb{E}\left[h_n(t)^\eta\right]\, dt\\&\leq &C\overline{M}(\mu_n(\cdot))^{\gamma(1-\sigma)}\int_{t_0}^T e^{-\rho t}\mathbb{E}\left[h_n(t)\right]^\eta\, dt \\ &\leq& C\overline{M}(\mu_n(\cdot))^{\gamma(1-\sigma)}\int_{t_0}^T e^{-\rho t}2^\eta e^{\frac{\eta C_{2,2}}{2}(T-t_0)}\bE \left[(e^{y_0})^2\right]^{\frac{\eta}{2}}\ud t.
\end{eqnarray*}
Similarly as above, since $\mu_n(\cdot) \to \mu(\cdot)$ in $C([0,T], \mathcal{P}_2)$, then $M(\mu_n(t))\to M(\mu(t))$ for all $t \in [0,T]$ and therefore, for $n$ large enough, we can estimate $|W_n|$ by a constant idependent of $n$.
\end{proof}
\begin{remark}
  Estimates (\ref{eq:Vn}) and (\ref{eq:Dvn}) can also be proved directly, with computations very similar to those in the proofs of Propositions~\ref{prop:MFGregularity} and \ref{prop:VFestimates}, thanks to the fact that for every fixed sequence $\mu_n(\cdot)$ one can bound $\overline{M}(\mu_n(\cdot))$ with a constant independent of $n$.
\end{remark}

We can now prove existence of a solution to the mean-field game system \eqref{MFGb}.\\

	\begin{theorem}\label{thm:existence}
		Let $T>0$ belong to the interval given in (\ref{eq:interval}). Under the standing assumptions, there exists a solution $(V,\mu)$ of \eqref{MFGb} on $[0,T]$ in the sense of Definition~\ref{def:sol}, where $T$ is given
	\end{theorem}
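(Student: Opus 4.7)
The plan is to apply Schauder's fixed point theorem to the set $\sQ_{K_1,K_2}$ defined in (\ref{eq:QKK}), with $K_1$, $T$ and $K_2$ chosen to satisfy (\ref{eq:K1}), (\ref{eq:interval}) and (\ref{eq:K2}), exactly as in the proof of Proposition~\ref{prop:MKV}. Recall that this set is a convex and compact subset of $C([0,T];\sP_2)$, and is contained in $C^{1/2}([0,T];\sP_2)$ (in fact, with a uniform Hölder seminorm), which gives us enough regularity in the input measure $\mu(\cdot)$ to invoke the results of Section~\ref{sec:preliminary}.

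The map to which Schauder's theorem will be applied is $\Phi\colon\sQ_{K_1,K_2}\to C([0,T];\sP_2)$ defined as follows: given $\mu(\cdot)\in\sQ_{K_1,K_2}$,
\begin{enumerate}
\item by Proposition~\ref{prop:MFGregularity} and Proposition~\ref{prop:VFestimates} there exists a value function $V^\mu\in C^{1,2}((0,T)\times\bR\times\bR_{++})$ solving the HJB in (\ref{MFGb}) and satisfying the gradient bound (\ref{eq:ineqhDV});
\item by Proposition~\ref{prop:MKV} and Theorem~\ref{thm:FKPunique}, with this $V^\mu$ plugged into the drift, the McKean-Vlasov FP equation (\ref{eq:FP}) admits a unique measure-valued solution $\nu(\cdot)$ on $[0,T]$;
\item we set $\Phi(\mu(\cdot))=\nu(\cdot)$.
\end{enumerate}
Any fixed point of $\Phi$ is automatically a solution to (\ref{MFGb}) in the sense of Definition~\ref{def:sol}, because steps (i) and (ii) provide respectively the classical HJB solution and the weak FP solution with the required boundary and initial conditions, and the consistency between the $\mu$ appearing in $H_1$ in the HJB and the $\mu$ solving the FP is exactly the fixed point condition.

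First I would verify that $\Phi$ maps $\sQ_{K_1,K_2}$ into itself. For this, one rerun the a priori estimates already carried out in the proof of Proposition~\ref{prop:MKV}: the bound on $\bE[\sup_t|\hat{x}(t)|^2+\hat{h}(t)^2]$ uses only the sublinearity and boundedness properties of $D_pH_0$ and $D_pH_1$ from Remark~\ref{rem:H0} and Lemma~\ref{lem:DpH}, which are uniform on $\sQ_{K_1,K_2}$ since $\overline{M}(\mu(\cdot))\leq\sqrt{2K_1}$ there. The choice of $K_1$, $T$ and $K_2$ from (\ref{eq:K1})-(\ref{eq:K2}) then guarantees that $\Phi(\mu(\cdot))\in\sQ_{K_1,K_2}$. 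Note that the crucial role played by the restriction of $T$ in (\ref{eq:interval}) appears precisely here.

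The main obstacle is continuity of $\Phi$ with respect to the $d_{\infty,2}$ metric. Suppose $\mu_n(\cdot)\to\mu(\cdot)$ in $C([0,T];\sP_2)$ with all $\mu_n(\cdot),\mu(\cdot)\in\sQ_{K_1,K_2}$; I need to show $\Phi(\mu_n(\cdot))\to\Phi(\mu(\cdot))$ in the same topology. I would split this into two substeps. The first substep is stability of the value function: using the local $C^1$ and Hölder bounds of Lemma~\ref{lem:locgradV}, together with the fact that viscosity solutions of the HJB with coefficients $\mu_n(\cdot)$ converge (by the local Lipschitz dependence of $H_1$ on $\mu$ proved in Lemma~\ref{lem:DpH_2} and the uniform bound on $\overline{M}(\mu_n(\cdot))$), one obtains that $V^{\mu_n}\to V^\mu$ together with their first derivatives locally uniformly on $[0,T]\times\bR\times\bR_{++}$, by Arzelà-Ascoli and the stability of viscosity solutions. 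The second substep is stability of the FP solution under convergence of $V^{\mu_n}$ and of the measure-valued coefficient $\mu_n(\cdot)$; this is done at the level of the associated McKean-Vlasov SDE (\ref{eq:MKV}), via the same localization argument with the stopping times $\tau_N$ used in Proposition~\ref{prop:MKV}, plus estimate (\ref{eq:delta_h_mu}) of Lemma~\ref{lem:hmoment}. The main technical point here is that $D_pH_1$ is only locally Lipschitz away from $M(\mu)=0$ (Lemma~\ref{lem:DpH_2}); this is handled by the same stopping-time truncation used in Proposition~\ref{prop:MKV}, whose tightness estimate $\bP(\tau_N\leq T)\lesssim (\log N)^{-2}$ remains uniform along the sequence because $K_1$ is fixed.

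Once continuity and self-mapping are established, Schauder's fixed point theorem produces $\mu^\ast(\cdot)\in\sQ_{K_1,K_2}$ with $\Phi(\mu^\ast(\cdot))=\mu^\ast(\cdot)$. Together with $V^{\mu^\ast}$, this gives a solution of (\ref{MFGb}) on $[0,T]$ in the sense of Definition~\ref{def:sol}, concluding the proof.
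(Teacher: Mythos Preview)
Your overall strategy---Schauder on $\sQ_{K_1,K_2}$, with $\Phi$ built from the value function followed by the unique FP solution---is exactly the paper's, and the self-mapping argument is correctly identified as a rerun of the estimates in Proposition~\ref{prop:MKV}.

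The gap is in your first continuity substep. You propose to deduce $V^{\mu_n}\to V^\mu$ from Arzel\`a--Ascoli and stability of viscosity solutions. This does not close here: Arzel\`a--Ascoli with Lemma~\ref{lem:locgradV} does give subsequential limits $v$ in $C^1_{\mathrm{loc}}$, and stability tells you that any such $v$ is a viscosity solution of the HJB with coefficient $\mu(\cdot)$, but the HJB on $[0,T]\times\bR\times\bR_+$ is \emph{not} known to have a unique viscosity solution (the degeneracy at $h=0$ and the growth of $H_1$ obstruct a global comparison principle). Since $\Phi$ is defined through the value function specifically, you cannot identify $v$ with $V^\mu$, and continuity of $\Phi$ does not follow. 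The paper makes exactly this point in the remark immediately after the proof.

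The fix, which is what the paper does, is to first prove $V^{\mu_n}\to V^\mu$ locally uniformly directly from the control representation~(\ref{eqn:V}): take $\eps$-optimal controls for $V^{\mu_n}(t_0,\bar x_0,\bar h_0)$, run them in both the $\mu_n$-dynamics and the $\mu$-dynamics, and estimate the difference of the running costs using Lemma~\ref{barh} and Lemma~\ref{lem:hmoment}. Only after this is established does one invoke Arzel\`a--Ascoli (with Lemma~\ref{lem:locgradV}) to upgrade to convergence of $DV^{\mu_n}$, the limit now being identified as $DV^\mu$ because $V^{\mu_n}\to V^\mu$ is already known. For the FP stability step the paper works at the PDE level, passing to the limit term by term in the weak formulation~(\ref{eq:sol_weak}) via compactness in $\sQ_{K_1,K_2}$ and uniqueness from Theorem~\ref{thm:FKPunique}; your SDE-level approach there is plausible but different.
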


\begin{proof}
A general scheme to prove this type of results has been given for example in \cite{cardnote}.
          We aim to apply Schauder's fixed point theorem; the proof is divided in several steps.\\

        \emph{Step 1.}  As the solution of the Hamilton-Jacobi equation \eqref{eqn:HJB} is not unique, we choose the value function among all possible solutions.
        Doing so, we can define the map $\Psi_{\mu_0}$ (to which we will apply Schauder's fixed point theorem to deduce existence of a fixed point in a suitable subset of $C([0,T],\mathcal{P}_2)$) in the following way.\\
        Given $\mu(\cdot) \in \mathcal{C}$ we consider the function $V$ as defined in \eqref{eqn:V}. Thanks to Proposition \ref{prop:MFGregularity} $V$ is a smooth solution to \eqref{eqn:HJB}. Then we define
        \begin{gather*}
          \Psi_{\mu_0}\colon C([0,T],\mathcal{P}_2)\to C([0,T],\mathcal{P}_2),
        \Psi_{\mu_0}(\mu(\cdot))=m(\cdot)
      \end{gather*}
      where $m(\cdot)$ is the unique solution to the Fokker-Planck equation \eqref{eq:FP}, as provided by Theorem~\ref{thm:FKPunique}.\\
    \emph{Step 2.} By the results of the previous section, $\Psi$ is well-defined. For $K_1$ and $K_2$ as in~(\ref{eq:K1})-~(\ref{eq:K2}), $\sQ_{K_1,K_2}$ as defined in (\ref{eq:QKK}) is a compact convex set in $C([0,T];\sP_2)$ that is left invariant by $\Psi_{\mu_0}$. Indeed, the only difference between $\Psi_{\mu_0}$ and $\Xi_{(x_0,h_0)}$ is that the latter was defined for a generic function $V$ satisfying certain bounds, while in the former we choose a specific $V$, namely the value function of the optimization problem. Therefore the fact that $\sQ_{K_1,K_2}$ is invariant under the action of $\Psi_{\mu_0}$ can be proved in exactly the same way as the analogous claim for $\Xi_{(x_0,h_0)}$ in the proof of Proposition~\ref{prop:MKV}.\\

        \emph{Step 3.} Now we prove that $\Psi_{\mu_0}$ is continuous with respect to the topology induced by the distance $d_{\infty,2}$.\\
        Let $\mu_n(\cdot)\in\sQ_{K_1,K_2}$ be a convergent sequence with respect to $d_{\infty,2}$, and denote by $\mu(\cdot)$ its limit point. Let moreover $V_n, V_\mu$ be the value functions of the maximization problem for (\ref{eqn:J}) associated to $\mu_n(\cdot)$ and $\mu(\cdot)$, respectively.\\
        First we prove that  $V_n(t_0,\bar x_0, \bar h_0) \to V(t_0, \bar x_0, \bar h_0)$ for all $t_0 \in [0,T], \bar x_0 \in \R, \bar h_0 \in \R_+$; we will write $\bar\delta_0$ for $\delta_{\bar x_0,\bar h_0}$ (the Dirac measure with mass $1$ in $(\bar x_0,\bar h_0)$ and mass $0$ everywhere else). For all $n \in \mathbb{N}$ and $\eps >0$, let $(v_n, s_n) \in \mathcal{K}$ be $\eps$-optimal controls for $V_n(t_0,x_0,h_0)$, denote by $\left(x_n(\cdot),h_n(\cdot)\right)$ the solution to
\begin{equation*}
\begin{cases}
  dx_n(t) = v_n(t) dt  +\epsilon dZ(t)\quad\text{ for }t\in[t_0,T],\\
    dh_n(t) = s_n(t) f(h_n(t)) F(x_n(t), \mu_n(t)) -\zeta h_n(t)dt +\chi h_n (t) dW(t)\quad\text{ for }t\in[t_0,T],\\
      x_n(t_0)=\bar x_{0}, h_n(t_0)=\bar h_{0}
  \end{cases}
\end{equation*}
and by $\left(x_\mu(\cdot),h_\mu(\cdot)\right)$ the solution to
\begin{equation*}
  \begin{cases}
  dx_\mu(t) = v_n(t) dt  +\epsilon dZ(t)\quad\text{ for }t\in[t_0,T],\\
    dh_\mu(t) = s_n(t) f(h_\mu(t)) F(x_\mu(t), \mu(t)) -\zeta h_\mu(t)dt +\chi h_\mu (t) dW(t)\quad\text{ for }t\in[t_0,T],\\
      x_\mu(t_0)=\bar x_{0}, h_\mu(t_0)=\bar h_{0}.
  \end{cases}
\end{equation*}
Actually $x_n(t)=x_\mu(t)$ almost surely for every $t$, since the measures $\mu$ and $\mu_n$ only affect the dynamics via the equation for $h$. We have
\begin{multline*}
  \left\vert V_n(t_0,\bar x_0,\bar h_0)-V_\mu(t_0,\bar x_0,\bar h_0)\right\vert\leq \\
  \eps+\mathbb{E}\left[\int_{t_0}^{T} e^{-\rho t} \left\vert J_n(x_n(t),h_n(t), s_n(t), v_n(t))-J_\mu(x_n(t),h_\mu(t), s_n(t), v_n(t))\right\vert\ud t\right],
\end{multline*}
where for simplicity we have set
\begin{equation*}
J_n(x_n(t),h_n(t), s_n(t), v_n(t))=u_\sigma((1-s_n(t))^{1-\gamma}f(h_n(t))^{(1-\gamma)}F(x_n(t),\mu_n(t))^\gamma A(x_n(t)))
\end{equation*}
and
\begin{equation*}
		J_\mu(x_n(t),h_\mu(t), s_n(t), v_n(t))=u_\sigma((1-s_n(\tau))^{1-\gamma}f(h_\mu(t))^{(1-\gamma)}F(x_n(t),\mu(t))^\gamma A(x_n(t))).
              \end{equation*}
              By Lemma \ref{barh}, there exists a constant $\hat{C}$ independent of $n$ ($\hat{C}$ depends only on $K_1,K_2$ and $T$) such that
\begin{align*}
  &\left\vert J_n(x_n(t),h_n(t), s_n(t), v_n(t))-J_n(x_n(t),h_\mu(t), s_n(t), v_n(t))\right\vert\\
             &\leq \hat{C}\left(f(h_n(t))^\eta \left\vert F(x_n(t),\mu_n(t))-F(x_n(t),\mu(t))\right\vert^\gamma+F(x_n(t), \mu(t))^\gamma\left\vert f(h_n(t))-f(h_\mu(t))\right\vert^\eta\right)\\
             &\leq \hat{C}\left(h_n(t)^\eta d_{\infty,2}(\mu_n, \mu)^\gamma+\left\vert h_n(t)-h_\mu(t)\right\vert^\eta\right).
\end{align*}
Thus by Jensen's inequality, Lemma \ref{lem:hmoment} and (\ref{eq:est_h_1}) we have
\begin{align*}
  &\left\vert V_n(t_0,x_0,h_0)-V_\mu(t_0,x_0,h_0)\right\vert\\
  &\leq \eps+\hat{C}d_{\infty,2}(\mu_n(\cdot), \mu(\cdot))^\gamma\int_{t_0}^{T}\mathbb{E}[h_n(t)^2]^{\frac{\eta}{2}}  dt+\hat{C}\int_{t_0}^T \mathbb{E}[\left\vert h_n(t)-h_\mu(t)\right\vert^2]^{\frac{\eta}{2}} dt\\
  &\leq  \eps+4\hat{C} d_{\infty,2}(\mu_n(\cdot), \mu(\cdot))^\gamma (T-t_0)e^{(T-t_0)C_{2,2}(T,\mu_n(\cdot))}\bar h_0^\eta+\hat{C} d_{\infty,2}(\mu_n(\cdot), \mu(\cdot))^{\eta}C(\mu_n,\mu,\bar\delta_0,T)^{\frac{\eta}{2}},
\end{align*}
where $C_{2,2}(T,\mu_n(\cdot)$ is given in (\ref{eq:constants_C_2}) and $C(\mu_n,\mu,\bar\delta_0,T)$ in (\ref{eq:C_long}). The sequences $\{C_{2,2}(T,\mu_n(\cdot))\}$ and $\{C(\mu_n,\mu,\bar\delta_0,T)\}$ are bounded (actually since the Wasserstein distance $\sW_2$ metrizes weak convergence together with convergence of second moments, we also have that
\begin{equation*}
  C_{2,2}(T,\mu_n(\cdot))\overset{n\to+\infty}{\longrightarrow}C_{2,2}(T,\mu(\cdot))\quad\text{ and }\quad C(\mu_n,\mu,\bar\delta_0,T)\overset{n\to+\infty}{\longrightarrow} C(\mu,\mu,\bar\delta_0,T),
\end{equation*}
and the same conclusion actually holds for every measure $\nu\in\sP_2$ in place of $\bar \delta_0$). By the arbitrariness of $\eps$ we deduce the convergence of $V_n$ to $V$, locally uniformly on $[0,T]\times\bR\times\bR_+$.\\

By Lemma \ref{lem:locgradV}, $D_hV_n$ is uniformly bounded in $n$ and uniformly H\"older continuous in $n$ in every compact set of $[0,T] \times \R\times \R_+$. Set $K_1=[0,T]\times (-r,)]\times (0,r)$ for ome fixed $r>0$. By the Ascoli-Arzel\`a theorem we can extract a subsequence $\left\{D_hV_{n}^1\right\}$ of $\left\{D_nV_n\right\}$ which converges locally uniformly on $K_1$. Similarly, we set $K_2=[0,T]\times (-r-1,r+1)\times (0,r+1)$ and we can extract a subsequence $\left\{D_h V_{n}^2\right\}$ of $\left\{D_h V_{n}^1\right\}$ that converges locally uniformly $K_2$. We repeat the same argument on each relatively compact set $K_k=[0,T]\times (-r-k+1,r+k-1)\times (0, r+k-1)$ and deduce the existence, for each $k$, of a subsequence $\left\{D_h V_{n}^k\right\}$ uniformly converging in $K_k$. Then, the diagonal subsequence $\left\{D_hV_{m}^m\right\}$ converges locally uniformly to some function $Q$. But also $V_n$ converges to $V$ locally uniformly, so that by the regularity of $V_n$ and $V$ we must have $Q=D_hV$. Since this construction can be applied to every subsequence of $\{D_hV_n\}$, also the original sequence $\{D_hV_n\}$ converges locally uniformly to $D_hV$. The exact same argument can be applied to $\{D_xV_n\}$, yielding its locally uniform convergence to $D_xV$. By the arbitrariness of $(t_0,\bar x_0,\bar h_0)$ we deduce the convergence of $V_n$, $D_xV_n$ and $D_hV_n$ on $[0,T]\times\bR\times\bR_{++}$.\\
Now define
\begin{equation*}
 m_n(\cdot):=\Psi_{\mu_0}(\mu_n(\cdot)),\quad m(\cdot):=\Psi_{\mu_0}(\mu(\cdot));
\end{equation*}
each $m_n(\cdot)$ solve a Fokker-Planck equation that is the same as the second equation in (\ref{MFGb}), with coefficients evaluated in $DV_n$. Consider now a subsequence of $\{m_n(\cdot)\}$, whose elements we still denote by $m_n(\cdot)$; such subsequence belongs to $\sQ_{K_1,K_2}$, thus it has a convergent subsequence $m_{n_k}(\cdot)$. Let $\bar{m}(\cdot)$ be its limit point and fix $\phi\in C_c^{\infty}([0,T]\times\bR\times\bR_+)$; we will show that the Fokker-Planck equation for $m_{n_k}(\cdot)$, in the sense of (\ref{eq:sol_weak}) with coefficients evaluated ind $DV_n$, converges term by term to the Fokker-Planck equation (\ref{eq:sol_weak}) for $\bar{\mu}(\cdot)$ (with coefficients evaluated in $DV$). Since the solution to the latter is unique, this proves that every subsequence of $m_n(\cdot)$ has a subsequence which converges to the same limit $\bar{\mu}(\cdot)$, so the whole sequence $m_n(\cdot)$ converges to $\bar{m}(\cdot)$ and we have, again by uniqueness, $\bar{m}(\cdot)=m(\cdot)$, hence yielding continuity of $\Psi_{\mu_0}$ on $\sQ_{K_1,K_2}$.\\
Since $m_{n_k}(t)$ converges weakly to $\bar{m}(t)$ for every $t$, we have that
\begin{equation*}
  \int_{\bR\times\bR_+}\phi(t,x,h)m_{n_k}(t;\ud x,\ud h)\to\int_{\bR\times\bR_+}\phi(t,x,h)\bar{m}(t;\ud x,\ud h).
\end{equation*}
We then have, for every $r\in[0,T]$,
\begin{align*}
  \int_{\bR\times\bR_+}D_pH_0&(D_xV_{n_k}(r,x,h))D_x\phi(r,x,h)m_{n_k}(r;\ud x,\ud h)\\
                             &\phantom{\leq}-\int_{\bR\times\bR_+}D_pH_0(D_xV(r,x,h))D_h\phi(r,x,h)\bar{m}(r;\ud x,\ud h)\\
                             &\leq \overline{B} \int_{\bR\times \bR_+}D_x\phi(r,x,h)\left(m_{n_k}(r;\ud x,\ud h)-\bar{m}(r;\ud x,\ud h)\right) \\
                             &\phantom{\leq}+ \int_{\bR\times\bR_+} \left(D_pH_0(D_xV_{n_k}(r,x,h))-D_pH_0(D_xV(r,x,h))\right)\bar{m}(r;\ud x,\ud h)\\
                             &\leq \overline{B} \int_{\bR\times \bR_+}D_x\phi(r,x,h)\left(m_{n_k}(r;\ud x,\ud h)-\bar{m}(r;\ud x,\ud h)\right) \\
                             &\phantom{\leq}+ \hat{L}\int_{\bR\times\bR_+} \left\vert D_xV_{n_k}(r,x,h)-D_xV(r,x,h)\right\vert\bar{m}(r;\ud x,\ud h)
\end{align*}
for some constant $\hat{L}$, because $D_xV_n$ is bounded uniformly in $n$ and $D_pH_0$ is locally Lipschitz. The right hand side in the previous inequality then converges to $0$ because $m_{n_k}$ converges weakly to $\bar{m}$ and $D_xV_{n_k}$ converges locally uniformly to $D_xV$.\\
The terms
\begin{equation*}
  \int_{\bR\times\bR_+}\frac{\partial\phi}{\partial t}(r,x,h)m_{n_k}(r;\ud x,\ud h)-\int_{\bR\times\bR_+}\frac{\partial\phi}{\partial t}(r,x,h)\bar{m}(r;\ud x,\ud h)
\end{equation*}
and
\begin{equation*}
  \int_{\bR\times\bR_+}D^2_{xx}\phi(r,x,h)m_{n_k}(r;\ud x,\ud h)-\int_{\bR\times\bR_+}D^2_{xx}\phi(r,x,h)\bar{m}(r;\ud x,\ud h)
\end{equation*}
trivially converge to $0$ since $m_{n_k}$ converges to $\bar{m}$ weakly.\\
For the same reason, together with $D^2_{hh}\phi$ having compact support, also the term
\begin{equation*}
  \int_{\bR\times\bR_+}h^2D^2_{hh}\phi(r,x,h)m_{n_k}(r;\ud x,\ud h)-\int_{\bR\times\bR_+}h^2D^2_{hh}\phi(r,x,h)\bar{m}(r;\ud x,\ud h)
\end{equation*}
converges to $0$.\\
For the remaining term we have
\begin{align}
  \nonumber  &\int_{\bR\times\bR_+}D_pH_1(x,h,m_{n_k}(r),D_hV_{n_k}(r,x,h))D_h\phi(r,x,h)m_{n_k}(r;\ud x,\ud h)\\
                                        &\phantom{aaaa}-\int_{\bR\times\bR_+}D_pH_1(x,h,\bar{m}(r),D_hV(r,x,h))D_h\phi(r,x,h)\bar{m}(r;\ud x,\ud h)\\
\label{eq:int_FP_1}                             &\phantom{-}\leq\int_{\bR\times\bR_+}D_pH_1(x,h,m_{n_k}(r),D_hV_{n_k}(r,x,h))D_h\phi(r,x,h)\left(m_{n_k}(r;\ud x,\ud h)-\bar{m}(r;\ud x,\ud h)\right)\\
  \label{eq:int_FP_2}                             &\phantom{-\leq}+\int_{\bR\times\bR_+}\left(D_pH_1(x,h,m_{n_k}(r),D_hV_{n_k}(r,x,h))\right.\\
                                  &\phantom{-\leq\leq}\left.-D_pH_1(x,h,\bar{m}(r),D_hV(r,x,h))\right)D_h\phi(r,x,h)\bar{m}(r;\ud x,\ud h).
\end{align}
Let us look at (\ref{eq:int_FP_1}). The quantity $\sup_k\overline{M}(m_{n_k}(\cdot))$ is bounded by $K_1$ and $D_h\phi$ has compact support, so that thanks to Lemma~\ref{lem:DpH} there exists a constant $\hat{C}_\phi$ such that
\begin{multline*}
  \int_{\bR\times\bR_+}D_pH_1(x,h,m_{n_k}(r),D_hV_{n_k}(r,x,h))D_h\phi(r,x,h)\left(m_{n_k}(r;\ud x,\ud h)-\bar{m}(r;\ud x,\ud h)\right)\\ \leq \hat{C}_\phi\int_{\bR\times\bR_+}D_h\phi(r,x,h)\left(m_{n_k}(r;\ud x,\ud h)-\bar{m}(r;\ud x,\ud h)\right);
\end{multline*}
therefore (\ref{eq:int_FP_1}) converges to $0$. By Lemma~\ref{lem:DpH_2} we have that $D_pH_1(x,h,m_{n_k}(r),D_hV_{n_k}(r,x,h))-D_pH_1(x,h,\bar{m}(r),D_hV(r,x,h))$ converges to $0$ pointwise, while being uniformly bounded for the same reason as above; therefore by the dominated convergence theorem also (\ref{eq:int_FP_2}) converges to $0$.\\

\emph{Step 4.} By Schauder's fixed point theorem the map $\Psi_{\mu_0}$ has at least one fixed point $\mu(\cdot)\in\sQ_{K_1,K_2}$. Choose as $V$ the value function corresponding to such $\mu(\cdot)$; then the couple $(V,\mu)$ is a solution to (\ref{MFGb}) in the sense of Definition~\ref{def:sol}; this concludes the proof.
     \end{proof}

	\begin{remark}
	   By the regularity of the value function for each $\mu_n(\cdot)$, we know that $V_n$ is a solution of the Hamilton-Jacobi-Bellman equation in \eqref{eqn:HJB} where $H_1$ is evaluated in $\mu_{n}(\cdot)$. Then, by a similar procedure as the one in the proof of Theorem~\ref{thm:existence}, by Lemma \ref{lem:locgradV} and by the Ascoli-Arzel\`a theorem coupled with a diagonal argument, one could directly infer that $V_n$ converges locally uniformly to some $v$ and by stability of viscosity solutions applied to the HJB equation, one could then deduce that the subsequence $V_n$ converges to a solution $v$ of the same equation with $\tilde H_1$ evaluated at $\mu(\cdot)$. In any case, since no uniqueness of the solution of the HJB equation is guaranteed, it is not possible to conclude that the function $v$ obtained in this way is the value function corresponding to $\mu$. Therefore, due to how the operator $\Psi_{\mu_0}$ is defined, in the previous proof we need to prove directly the convergence of $V_n$ to $V$.
	\end{remark}

\bibliographystyle{abbrv} 
\bibliography{bibtex}

\end{document}